\newtheorem{theorem}{Theorem}[section]
\newtheorem{lemma}[theorem]{Lemma}
\newtheorem{proposition}[theorem]{Proposition}
\newtheorem{corollary}[theorem]{Corollary}
\theoremstyle{remark}
\newtheorem{remark}[theorem]{Remark}
\theoremstyle{definition}
\newtheorem{definition}[theorem]{Definition}
\theoremstyle{remark}
\newtheorem{example}[theorem]{Example}
\newtheorem{case}{Case}
\numberwithin{equation}{section}
\newcommand{\Z}{\mathbb{Z}}
\newcommand{\R}{\mathbb{R}}
\newcommand{\prob}{\mathbf{P}}
\newcommand{\E}{\mathbb{E}}
\newcommand{\ve}{\mathbf}
\newcommand{\CF}{\mathcal{F}}
\newcommand{\CP}{\mathcal{P}}
\newcommand{\CH}{\mathcal{H}}
\newcommand{\singular}{\mathcal{G}}
\newcommand{\codim}{\text{codim }}
\begin{document}

\title{Narrow arithmetic progressions in the primes}
\author{Xuancheng Shao}
\address{Mathematical Institute\\ Radcliffe Observatory Quarter\\ Woodstock Road\\ Oxford OX2 6GG \\ United Kingdom}
\email{Xuancheng.Shao@maths.ox.ac.uk}
\thanks{XS is supported by a Glasstone Research Fellowship.}

\maketitle

\begin{abstract}
We study arithmetic progressions in primes with common differences as small as possible. Tao and Ziegler showed that, for any $k \geq 3$ and $N$ large, there exist non-trivial $k$-term arithmetic progressions in (any positive density subset of) the primes up to $N$ with common difference $O((\log N)^{L_k})$, for an unspecified constant $L_k$. In this work we obtain this statement with the precise value $L_k = (k-1) 2^{k-2}$. This is achieved by proving a relative version of Szemer\'{e}di's theorem for narrow progressions requiring simpler pseudorandomness hypotheses in the spirit of recent work of Conlon, Fox, and Zhao.
\end{abstract}

\section{Introduction}

A central problem in additive number theory concerns finding in the set of primes various linear patterns, such as $k$-term arithmetic progressions ($k$-APs) for $k\geq 2$. The groundbreaking work of Green and Tao \cite{GT08} shows that any positive density subset of the primes contains infinitely many $k$-APs.

\begin{theorem}[Arithmetic progressions in primes]\label{thm:gt}
Let $k\geq 2$ be a positive integer and $\delta>0$ be real. Let $N$ be sufficiently large depending on $k$ and $\delta$. Then any subset $A\subset\mathcal{P}\cap [N]$ with $|A|\geq \delta N/\log N$ contains a nontrivial $k$-AP.
\end{theorem}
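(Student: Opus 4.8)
The plan is to follow the transference strategy of Green and Tao: after a suitable normalization, the primes are pointwise dominated by a pseudorandom measure $\nu$, and a \emph{relative} Szemer\'edi theorem then locates $k$-APs inside any subset of positive relative density of $\nu$.

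\textbf{Step 1 (the $W$-trick and passage to a cyclic group).} Let $w=w(N)\to\infty$ slowly (say $w=\log\log N$), put $W=\prod_{p\le w}p$, and recall that the primes in $[N]$ are essentially equidistributed among the $\varphi(W)$ residue classes coprime to $W$. By pigeonhole there is a residue $b$ with $\gcd(b,W)=1$ such that $A$ retains relative density $\gg\delta$ on the progression $\{Wn+b\}$. Rescaling $n\mapsto Wn+b$ transports the problem into $[N']$ with $N'\asymp N/W$, and I would work in $\Z/N''\Z$ for a prime $N''$ with $kN'<N''\ll N'$ so that genuine $k$-APs in $[N']$ survive without wraparound. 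Let $f:\Z/N''\Z\to\R_{\ge 0}$ be the indicator of the rescaled copy of $A$, normalized by roughly $\tfrac{\varphi(W)}{W}\log N$, so that $\E f\gg\delta$ and (after the next step) $0\le f\le\nu$.

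\textbf{Step 2 (the majorant).} Following Goldston--Y{\i}ld{\i}r{\i}m and Green--Tao, I would take
$\nu(n)=\tfrac{\varphi(W)}{W}\,(\log R)^{-1}\bigl(\sum_{d\mid Wn+b,\ d\le R}\mu(d)\log(R/d)\bigr)^{2}$
with $R=N^{\eta}$ for a small fixed $\eta=\eta(k)>0$. A routine sieve computation shows $\nu$ majorizes a fixed multiple of the $W$-tricked von Mangoldt function on the relevant range, hence $0\le f\le\nu$. The essential point is that $\nu$ satisfies the pseudorandomness hypotheses required by the relative Szemer\'edi theorem --- a linear forms condition (and, in the original Green--Tao formulation, also a correlation condition): expanding the divisor sums and exchanging the order of summation reduces these to estimating products of local factors over primes $p>w$, the small primes having been neutralized precisely by the $W$-trick.

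\textbf{Step 3 (transference and conclusion).} Granting the relative Szemer\'edi theorem, the hypotheses $0\le f\le\nu$, $\nu$ pseudorandom, $\E f\gg\delta$ force the $f$-weighted count of $k$-APs in $\Z/N''\Z$ to be $\gg_{k,\delta}1$. The degenerate progressions (common difference $0$) contribute only $\sum_n f(n)^k\le\|f\|_\infty^{k-1}\sum_n f(n)\ll(\log N)^{O_k(1)}\cdot N''/\log N$, which is dwarfed by the main term; hence a nontrivial $k$-AP occurs among the rescaled points of $A$, and undoing the rescaling yields one in $A$ with nonzero common difference $Wd$.

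The main obstacle is Step 2: verifying that $\nu$ satisfies the linear forms (and correlation) conditions is where all the analytic number theory enters, requiring uniform Goldston--Y{\i}ld{\i}r{\i}m--type asymptotics for sums $\sum_n\prod_i\Lambda_R(\psi_i(n))$ over finitely many pairwise non-proportional affine-linear forms $\psi_i$, with a genuine saving over the trivial bound. The relative Szemer\'edi theorem used in Step 3 is the combinatorial heart; I would invoke it as a black box (it admits an energy-increment/regularity proof, recently simplified by Conlon, Fox and Zhao), so that the remainder of the argument is bookkeeping.
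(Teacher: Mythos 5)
Your proposal is correct and follows essentially the same route the paper takes (and attributes to Green--Tao): the $W$-trick and pigeonhole to pass to a dense residue class, a Goldston--Y{\i}ld{\i}r{\i}m--type majorant $\nu$ verified to satisfy the linear forms hypotheses, and the relative Szemer\'edi theorem invoked as a black box, with the trivial $d=0$ progressions discarded by a crude $\|f\|_\infty$ bound --- precisely the template the paper itself mirrors in deducing its narrow analogue (Theorem \ref{thm:narrow}) from Theorem \ref{thm:relative-szemeredi} and Propositions \ref{thm:rep-index} and \ref{thm:majorant}. No gaps beyond the standard ones you explicitly flag as black boxes.
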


Here $\CP$ denotes the set of primes, $[N]$ denotes the interval $\{1,2,\cdots,N\}$, and a $k$-AP is called nontrivial if its common difference is nonzero. Recall Szemer\'{e}di's theorem, which asserts the existence of $k$-APs in dense subsets of the integers. Since the set of primes has density zero in the integers, Szemer\'{e}di's theorem does not immediately imply Theorem \ref{thm:gt}. The main idea in \cite{GT08}, now referred to as the \textit{transference principle}, is then to place the set of primes densely inside a superset of ``almost primes", and to show that this superset satisfies certain pseudorandomness hypotheses so that it behaves just like the set of all integers.

\begin{theorem}[Relative Szemer\'{e}di's theorem]\label{thm:gt-relative}
Let $k\geq 2$ be a positive integer and $\delta>0$ be real. Let $N$ be prime and sufficiently large depending on $k$ and $\delta$. Let $G=\Z/N\Z$ and let $f,\nu:G\rightarrow \R$ be functions satisfying $0 \leq f\leq \nu$. Suppose that $\nu$ satisfies the $k$-linear forms conditions, and that $\E f \geq \delta$. Then $\Lambda(f,\cdots,f) \geq c$ for some constant $c=c(k,\delta)>0$.
\end{theorem}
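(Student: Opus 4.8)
The plan is to run the transference argument of Green and Tao, in the streamlined form due to Conlon, Fox, and Zhao, deducing the relative statement from ordinary Szemer\'{e}di's theorem using only the linear forms condition. There are three ingredients: a dense model theorem, a counting lemma (a relative generalized von Neumann inequality), and the classical Szemer\'{e}di theorem.

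\textbf{Step 1: dense model.} First I would produce a ``model'' function $g : G \to \R$ with $0 \le g \le 1$, $\E g = \E f \ge \delta$, and $f - g$ small in a suitable weak norm (a cut-type norm, or equivalently one dual to a bounded average of Gowers boxes). This is the standard Hahn--Banach / minimax transference lemma: were no such $g$ to exist, a separating linear functional would furnish a bounded test function against which $f$, and hence $\nu$, correlates nontrivially; but the $k$-linear forms condition forces $\nu$ to lie within $o(1)$ of the constant $1$ in the dual norm in question, so it cannot correlate with a bounded function more than $o(1)$ beyond what $1$ does, a contradiction. The one computation needed is that expanding the relevant moment of $\nu - 1$ produces a bounded family of linear forms, each of whose averages is evaluated by the linear forms condition as $1 + o(1)$.

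\textbf{Step 2: counting lemma.} Next I would show $|\Lambda(f,\ldots,f) - \Lambda(g,\ldots,g)| = o(1)$. Telescoping one coordinate at a time, it suffices to bound $\Lambda(h_1,\ldots,h_k)$ when one $h_i = f - g$ and each remaining $h_j$ is $f$ or $g$, hence bounded pointwise by $\nu$ or by $1$. Applying the Cauchy--Schwarz--Gowers inequality $k-1$ times peels off the bounded arguments and reduces the quantity to $\|f - g\|$ in the weak norm of Step 1, times a Gowers-box average of translates of $\nu$ and of $1$; the linear forms condition evaluates that box average as $1 + o(1)$, so the whole term is $O(\|f - g\|) = o(1)$. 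This is the step where I expect the real work to lie: one must track precisely which linear forms are generated by each Cauchy--Schwarz step and check that they all lie in the system covered by the hypothesis, and one must choose the weak norm in Step 1 to be exactly what survives the Cauchy--Schwarz peeling -- this matching of the two lemmas, so that only linear forms (and no correlation condition) are invoked, is the crux of the Conlon--Fox--Zhao simplification.

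\textbf{Step 3: conclusion.} Since $0 \le g \le 1$ and $\E g \ge \delta$, the functional form of Szemer\'{e}di's theorem gives $\Lambda(g,\ldots,g) \ge c'(k,\delta) > 0$. Combined with Step 2 this yields $\Lambda(f,\ldots,f) \ge c'(k,\delta) - o(1) \ge c(k,\delta) > 0$ once $N$ is sufficiently large in terms of $k$ and $\delta$, which completes the proof.
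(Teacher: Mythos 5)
Your overall architecture (dense model $+$ counting lemma $+$ Szemer\'{e}di) is the right strategy, and it is the route taken in the sources this theorem is quoted from (Green--Tao, Conlon--Fox--Zhao) and in this paper's proof of its narrow analogue (Sections 7--9). Steps 1 and 3 are fine as sketched provided the weak norm is a cut-type norm: products of cut-type test functions can be rewritten as (convex combinations of) cut-type test functions in more variables, so $|\langle \nu-1, u_1\cdots u_K\rangle| = o(1)$ does follow from the linear forms condition alone (this is the analogue of Lemmas \ref{lem:FS-stable} and \ref{lem:nu-1} here), and Szemer\'{e}di's theorem finishes once the counting lemma is in place.

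The gap is in Step 2. After telescoping, you must handle $\Lambda(f-\widetilde{f}, h_2,\dots,h_k)$ where some $h_j$ are bounded only by $\nu$, and the Cauchy--Schwarz--Gowers peeling does \emph{not} reduce this to the cut-type norm of $f-\widetilde{f}$: each Cauchy--Schwarz step carries the weights $\nu$ along, and what survives is a $\nu$-weighted box/Gowers-type norm of $f-\widetilde{f}$, not the norm furnished by Step 1. There is no known norm that simultaneously (a) admits the dense-model argument using only the weak linear forms conditions assumed here and (b) is exactly what the peeling produces, so the ``matching'' you describe as the crux is precisely what fails. Green and Tao closed this gap on the Step-1 side, working with the $U^{k-1}$ norm at the cost of dual functions and an extra correlation condition (absent from the hypotheses here); Conlon--Fox--Zhao, and this paper in the narrow setting (Section \ref{sec:counting-lemma}), close it on the Step-2 side by a densification induction: replace the other $k-1$ factors by their averaged product $f_1'$, show $\E_{n}|\nu_1'(n)-1|^2=o(1)$ from the linear forms conditions so that $f_1'$ may be truncated to $\min(f_1',1)$ at negligible cost, and then test the resulting bounded function against the cut norm and the induction hypothesis. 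Without one of these devices, the assertion that the peeling ``reduces the quantity to $\|f-\widetilde{f}\|$ in the weak norm of Step 1'' is unjustified, and the proof as proposed does not close.
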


Here $\E f$ and $\E \nu$ denotes the average value of $f$ and $\nu$, respectively, and the counting function $\Lambda(f_1,\cdots,f_k)$ is defined by
\[ \Lambda(f_1,\cdots,f_k) = \E_{n\in G} \E_{d\in G} f_1(n) f_2(n+d) \cdots f_k(n+(k-1)d) \]
for functions $f_1,\cdots,f_k: G\rightarrow\R$.

Recently Conlon-Fox-Zhao \cite{CFZ13} found a simpler proof of Theorem \ref{thm:gt-relative} using a sparse hypergraph regularity lemma, which also has the pleasant consequence of weakening the linear forms conditions that the majorant $\nu$ must satisfy. For the precise definition of these linear forms conditions, see Definition \ref{def:linear-forms} below and the remarks following it.

The main goal of this paper is to find $k$-APs in primes with common difference as small as possible. This problem of finding narrow progressions in the primes has been studied by Tao and Ziegler \cite{TZ08,TZ14}. In fact, they studied the much more general problem of finding narrow \textit{polynomial} progressions of the form $a+P_1(d),\cdots,a+P_k(d)$, where $P_1,\cdots,P_k$ are polynomials satisfying $P_1(0)=\cdots=P_k(0)=0$, and showed that the step of these progressions $d$ can be taken $O((\log N)^L)$ for some constant $L>0$ (depending only on $P_1,\cdots,P_k$). Moreover, they remarked that, in the case of arithmetic progressions, $L$ can be taken to be $Ck2^k$ for some absolute constant $C>0$ by following their arguments specialized to APs. Our main result confirms this remark, and moreover gives a precise value of the exponent $L$, which we will argue is optimal under current technologies.

\begin{theorem}[Narrow arithmetic progressions in primes]\label{thm:narrow}
Let $k\geq 2$ be a positive integer and $\delta>0$ be real. Let $N$ be sufficiently large depending on $k$ and $\delta$. Then any subset $A\subset\mathcal{P}\cap [N]$ with $|A|\geq \delta N/\log N$ contains a nontrivial $k$-AP with common difference $d$ satisfying $|d| = O_{k,\delta}( (\log N)^{L_k} )$ for any $\varepsilon>0$, where $L_k=(k-1)2^{k-2}$.
\end{theorem}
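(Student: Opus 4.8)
The plan is to prove Theorem~\ref{thm:narrow} in the spirit of Green--Tao, but with the common difference constrained to a short interval $[H]$ of length roughly $(\log N)^{L_k+\varepsilon}$ (this is the harmless $\varepsilon$-loss appearing in the conclusion). After the usual $W$-trick, $W=\prod_{p\le w}p$ with $w\to\infty$ slowly (which does not interfere with the smallness of $H$), a rescaling, and a cutoff discarding the negligibly many primes of size $\le N^{1-\varepsilon}$, one reduces to the following: given $f,\nu\colon[N]\to\R_{\ge 0}$ with $0\le f\le\nu$ and $\E f\ge\delta$, where $\nu$ is a suitable majorant for the rescaled primes of size $\asymp N$, the narrow counting operator $\Lambda_{[H]}(f,\dots,f)=\E_{n\in[N]}\,\E_{d\in[H]}\,f(n)f(n+d)\cdots f(n+(k-1)d)$ is $\ge c(k,\delta)>0$. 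Since the differences are genuinely small, all of this is carried out on the interval $[N]$ (embedded in a slightly larger interval to prevent wrap-around) rather than on $\Z/N\Z$; the cyclic-group formalism of Theorem~\ref{thm:gt-relative} is replaced throughout by its interval analogue, which costs nothing.

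The combinatorial engine is a relative Szemer\'edi theorem for narrow progressions: if $\nu$ obeys a Conlon--Fox--Zhao-type linear forms condition in which every difference variable is restricted to $[H]$, then $0\le f\le\nu$ and $\E f\ge\delta$ force $\Lambda_{[H]}(f,\dots,f)\gg_{k,\delta}1$. I would establish this by running the argument of \cite{CFZ13}: the dense-model (transference) step and the counting lemma both carry over once the relevant box norms and weighted hypergraphs are built on $[N]$ with neighbourhoods indexed by $[H]$, since the only input used about $\nu$ is the (now narrow) linear forms condition. The genuinely new point is the final step, Szemer\'edi's theorem applied to the dense model $g$ ($0\le g\le1$, $\E g\ge\delta$) with common difference in $[H]$. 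This follows from an averaging trick: the mean over windows $[t,t+H]\subseteq[N]$ of the $g$-density of the window equals $\E g$, so a positive proportion of windows have $g$-density $\ge\delta/2$; inside each such window Szemer\'edi's theorem in Varnavides form supplies $\gg_{k,\delta}H^2$ progressions of difference $\le H$, and summing over windows yields $\Lambda_{[H]}(g,\dots,g)\gg_{k,\delta}1$. Crucially this needs only $H$ larger than a constant depending on $k,\delta$, so it imposes no real constraint on $H$.

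The analytic crux is verifying the narrow linear forms condition. I would take $\nu$ to be a normalized truncated divisor sum of Goldston--Y{\i}ld{\i}r{\i}m type with cutoff parameter $R=N^{\eta}$, $\eta=\eta(k)$ small; as in \cite{GT08}, with such $R$ the weight $\nu$ majorizes the rescaled primes of size $\asymp N$ up to a bounded factor. Unpacking the Conlon--Fox--Zhao linear forms condition for $k$-APs, with the difference variables confined to $[H]$, produces a bounded family of linear forms, and one counts that exactly $(k-1)2^{k-2}$ of them occur. Carrying out the Goldston--Y{\i}ld{\i}r{\i}m sieve computation for these forms --- now being careful that the difference variables sweep out only $[H]$ rather than a full residue system --- the main term is $1+o(1)$ as usual, while the extra error caused by the finite range $[H]$, after the M\"obius cancellation in the sieve weights is exploited, is $O\big((\log N)^{(k-1)2^{k-2}}/H\big)$: morally, each of the $(k-1)2^{k-2}$ forms can cost one factor of $\log R\asymp\log N$, arising from the configurations in which the sieve divisors are so aligned that the restriction $d\in[H]$ becomes visible. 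Thus the narrow linear forms condition holds once $H\ge(\log N)^{(k-1)2^{k-2}+\varepsilon}$, which is what we need.

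The main obstacle is precisely this sieve estimate: one must control the M\"obius cancellation sharply enough to see that the cost of confining the difference variables to $[H]$ is \emph{exactly} $(\log N)^{(k-1)2^{k-2}}$ --- not larger, which would weaken the theorem, and not (this is the optimality claim) smaller, since $(k-1)2^{k-2}$ is the number of forms that the transference argument genuinely requires $\nu$ to control; beating it would demand a substitute for the linear forms condition outside current technology. A secondary, routine point is to check that the interval versions of the transference and counting lemmas behave uniformly in $H$, and that the $W$-trick and the cutoff to primes of size $\asymp N$ are performed so as not to degrade the bound on $H$.
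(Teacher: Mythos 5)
Your outline reproduces the paper's broad strategy (W-trick, a relative Szemer\'edi theorem with the difference confined to a short interval, a window-averaging application of Szemer\'edi's theorem to the dense model, and a Goldston--Y{\i}ld{\i}r{\i}m majorant), but it has two genuine gaps at exactly the two places where the real work lies. First, the transference step: your claim that the Conlon--Fox--Zhao dense-model and counting arguments ``carry over once the relevant box norms and weighted hypergraphs are built on $[N]$ with neighbourhoods indexed by $[H]$'' is unsubstantiated and is the step most likely to fail. In the hypergraph model for $k$-APs the progression is parametrized by $k$ vertex variables of comparable size and the common difference is a linear form in \emph{all} of them, so restricting $d$ to an interval of length $H\ll N$ destroys the product structure on which the hypergraph counting lemma rests; the paper explicitly could not find a good graph model for narrow progressions and instead runs the transference arithmetically (Sections \ref{sec:relative}--\ref{sec:counting-lemma}), with two scales $S=o(D)$, discrepancy pairs with respect to $\psi$ and the $\psi_j$, a densification argument, and the \emph{additional} second and third families of conditions in Definition \ref{def:linear-forms}, which are needed precisely to absorb the boundary errors created by shifting the narrow variable $d$ by auxiliary variables $s_i\in[S]$. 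You would need either to exhibit the weighted-hypergraph model explicitly (and explain why restricting $d$ to $[H]$ is compatible with it) or to replace this step by such an arithmetic argument; as written it is a one-line assertion covering the main structural novelty.

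Second, the sharp exponent: your statement that the cost of confining the difference variables to $[H]$ is $O\bigl((\log N)^{(k-1)2^{k-2}}/H\bigr)$ is asserted, not proved, and it is the heart of obtaining $L_k=(k-1)2^{k-2}$. Note the relevant family contains $k2^{k-1}$ forms, not $(k-1)2^{k-2}$; the exponent arises as the supremum, over all ways these forms can collide on a subvariety, of (number of coincidences forced)/(codimension of that subvariety), as in Definition \ref{def:rep}. The configuration you describe (the collision $x_1=y_1$, giving $(k-1)2^{k-2}$ coincident pairs at codimension one) only shows the exponent cannot be taken smaller. To show it is not larger you must prove that every codimension-one hyperplane leaves at least $(k+1)2^{k-2}$ distinct restricted forms and every codimension-two subvariety leaves at least $2^{k-1}$ (Propositions \ref{prop:codim1} and \ref{prop:codim2}), which is the content of Proposition \ref{thm:rep-index} and takes all of Section \ref{sec:linear-algebra}; one must also carry out the analogous (easier) checks for the auxiliary families and control the averages of the singular series and of the error factor $E(\ve{h})$ over the degenerate sublattices via a Gallagher-type argument (Sections \ref{sec:average-singular}--\ref{sec:cor}). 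A minor further point: taking $H=(\log N)^{L_k+\varepsilon}$ yields only $|d|\leq(\log N)^{L_k+\varepsilon}$, which is weaker than the claimed $O_{k,\delta}((\log N)^{L_k})$; take instead $H=g(N)(\log N)^{L_k}$ for a slowly growing $g$ and conclude by diagonalization, as the paper does.
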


Just as Theorem \ref{thm:gt} is deduced from Theorem \ref{thm:gt-relative}, Theorem \ref{thm:narrow} will be deduced from the following relative version for narrow progressions.

\begin{theorem}[Relative Szemer\'{e}di's theorem for narrow progressions]\label{thm:relative-szemeredi}
Let $k\geq 2$ be a positive integer and $\delta>0$ be real.  Let $N$ be prime and sufficiently large depending on $k$ and $\delta$. Let $G=\Z/N\Z$ and let $f,\nu:G\rightarrow \R$ be functions satisfying $0 \leq f\leq \nu$. Let $D, S \geq 2$ be positive integers satisfying $S = o(D)$. Suppose that $\nu$ satisfies the $k$-linear forms conditions with width $S$, and that $\E f \geq \delta$. Then $\Lambda_D(f,\cdots,f) \geq c$ for some constant $c=c(k,\delta)>0$.
\end{theorem}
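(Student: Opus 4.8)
The plan is to combine a \emph{window reduction} with the transference technology of Conlon--Fox--Zhao \cite{CFZ13} and the dense case of narrow Szemer\'{e}di's theorem; note that $S\geq 2$ together with $S=o(D)$ forces $D\to\infty$, and we may assume $D\leq N$. For $d\in[D]$ write $\lambda_d:=\E_{n\in G}\prod_{i=0}^{k-1}f(n+id)\geq 0$, so that $\Lambda_D(f,\dots,f)=\tfrac1D\sum_{d=1}^D\lambda_d$, and for $b\in[D-S]$ put $\Lambda^{(S)}_b(f,\dots,f):=\E_{n\in G}\E_{t\in[S]}\prod_{i=0}^{k-1}f\big(n+i(b+t)\big)=\E_{t\in[S]}\lambda_{b+t}$. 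Since each $d$ is represented by the pair $(b,t)\in[D-S]\times[S]$ with multiplicity at most $S$, and each $d\in(S,D-S]$ exactly $S$ times, one has the elementary sandwich
\[ (1-o(1))\,\E_{b\in[D-S]}\Lambda^{(S)}_b(f,\dots,f)\ \leq\ \Lambda_D(f,\dots,f),\qquad \E_{b\in[D-S]}\Lambda^{(S)}_b(g,\dots,g)\ \geq\ \Lambda_D(g,\dots,g)-o(1) \]
for any $g:G\to[0,1]$. Thus it suffices to produce $g:G\to[0,1+o(1)]$ with $\E g\geq\delta$ such that $\Lambda^{(S)}_b(f,\dots,f)=\Lambda^{(S)}_b(g,\dots,g)+o(1)$ \emph{uniformly in} $b$, and then to bound $\Lambda_D(g,\dots,g)$ from below.

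For each fixed $b$, regard $\Lambda^{(S)}_b$ as the weighted count attached to the progression system $(n,t)\mapsto n+ib+it$ $(0\leq i\leq k-1)$ on $G\times[S]$, and run the generalized von Neumann argument of \cite{CFZ13} \emph{in the short variable $t$}, so that $n$ is the only variable ranging over all of $G$. This bounds $|\Lambda^{(S)}_b(f_1,\dots,f_k)-\Lambda^{(S)}_b(g_1,\dots,g_k)|$ by a sum over $i$ of a $\nu$-weighted Gowers box norm of $f_i-g_i$, up to error terms which are linear-forms averages of $\nu$ minus $1$; every form occurring has the shape $n+ib+i\big(t+\sum_j\omega_j h_j\big)$ with $\omega_j\in\{0,1\}$ and each $h_j$ a difference of two elements of $[S]$, so its difference part has size $O_k(S)$ and bounded coefficients, and these averages are therefore governed by the $k$-linear forms conditions \emph{of width $S$} and equal $1+o(1)$. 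The dense model theorem of \cite{CFZ13}, which invokes $\nu$ only through such width-$S$ averages, then supplies a single $g$ (independent of $b$) with $0\leq g\leq 1+o(1)$, $\E g=\E f\geq\delta$, and $f-g$ small in the relevant width-$S$ cut norm; reinserting this into the counting lemma gives $\sup_b|\Lambda^{(S)}_b(f,\dots,f)-\Lambda^{(S)}_b(g,\dots,g)|=o(1)$. I expect this step to be the main difficulty: one must verify that organising the Cauchy--Schwarz in $t$ genuinely confines all auxiliary differences to windows of length $O_k(S)$ --- so that width $S$, rather than width $D$ (which the primes' enveloping sieve will not satisfy once $S=o(D)$), suffices --- and that every estimate is uniform in the window parameter $b$. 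Equivalently one may run the sparse hypergraph removal lemma of \cite{CFZ13}, each Cauchy--Schwarz duplicating a single vertex class; this economy is exactly the feature absent from the original Green--Tao linear forms machinery, and is what makes the small exponent attainable.

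It remains to bound $\Lambda_D(g,\dots,g)$ for $g:G\to[0,1+o(1)]$ with $\E g\geq\delta$, using only $D\to\infty$. Set $A:=\{n\in G:g(n)\geq\delta/2\}$, so that $|A|\geq(\delta/2)N$ and $\prod_i g(n+id)\geq(\delta/2)^k$ whenever $n+id\in A$ for all $i$; it thus suffices to show $\Lambda_D(1_A,\dots,1_A)\gg_{k,\delta}1$. Let $N_0=N_0(\delta,k)$ be the threshold in Szemer\'{e}di's theorem for sets of density $\delta/4$ (we may assume $N_0\leq D$). For $a\in G$ and $q\in[\lfloor D/N_0\rfloor]$ the progression $Q_{a,q}:=\{a,a+q,\dots,a+(N_0-1)q\}$ satisfies $\E_{a,q}|A\cap Q_{a,q}|=(|A|/N)\,N_0$, so a positive $\delta$-dependent proportion of pairs $(a,q)$ have $|A\cap Q_{a,q}|\geq(\delta/4)N_0$, and for each such pair Szemer\'{e}di's theorem furnishes a $k$-AP inside $A\cap Q_{a,q}$, necessarily of common difference at most $N_0q\leq D$. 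A fixed $k$-AP of common difference $d\leq D$ arises from at most $N_0^2$ pairs $(a,q)$ (at most $N_0$ admissible values of $q$, namely $d/d_0$ over divisors $d_0\leq N_0$ of $d$, and at most $N_0$ admissible offsets), so $A$ contains $\gg_{k,\delta}N\lfloor D/N_0\rfloor/N_0^2\gg_{k,\delta}ND$ narrow $k$-APs, whence $\Lambda_D(1_A,\dots,1_A)\gg_{k,\delta}1$. (Alternatively this dense case follows from the arithmetic regularity method, reducing to the corresponding statement for bounded-complexity nilsequences.) Combining this lower bound with the transference step and the window reduction gives $\Lambda_D(f,\dots,f)\geq(1-o(1))\big(c(k,\delta)-o(1)\big)\geq c(k,\delta)/2$ once $N$ is large in terms of $k$ and $\delta$, which is the assertion.
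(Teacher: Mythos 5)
Your outer skeleton is sound and in fact parallels the paper's: reduce to producing a bounded model $g$ whose narrow $k$-AP count is comparable to that of $f$, then lower-bound the dense count via quantitative Szemer\'edi on short sub-progressions. Your sandwich over windows $b+[S]$ and your double-counting argument giving $\Lambda_D(1_A,\dots,1_A)\gg_{k,\delta}1$ are correct and play exactly the role of the paper's deduction of Theorem \ref{thm:relative-szemeredi} from Theorem \ref{thm:transference} (there one restricts $\widetilde f$ to translates of $[D]$ and applies Szemer\'edi in each window). The gap is the middle step, which is the entire content of the theorem and which your sketch only asserts. A ``generalized von Neumann in the short variable $t$'' is not available as written: in the two-variable parametrization $(n,t)\mapsto n+i(b+t)$ every form with $i\geq 1$ depends on both variables, so no Cauchy--Schwarz in $t$ alone eliminates the $\nu$-weighted factors. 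One must first shift the common difference by auxiliary short variables $s_2+\cdots+s_k$ and substitute $n\mapsto n+\psi_1(\ve{s})$ --- this is precisely the manoeuvre in Lemma \ref{lem:disc-pair} and Section \ref{sec:counting-lemma} --- and the boundary cost of that shift is controlled only by the third set of conditions in Definition \ref{def:linear-forms}, while the subsequent $k-1$ duplications must produce exactly the systems of the first set (Lemma \ref{lem:gcs}, applied with one long range $S_1=D$). None of this matching against the stated hypothesis is carried out, and you flag it yourself as the main difficulty. Moreover, freezing $b$ and demanding uniformity over all $b\leq D$ is not extractable from the width-$S$ conditions as stated: the forms acquire distinct offsets $ib$ of size up to $D$, whereas Definition \ref{def:linear-forms} covers homogeneous systems averaged over convex bodies whose coordinates are polynomially bounded in the inradius; the paper avoids this by keeping the long parameter as an averaged variable.

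The second, more serious, omission is unboundedness. Since the $f_i$ are only majorized by $\nu$, the telescoping bound of $|\Lambda^{(S)}_b(f_1,\dots,f_k)-\Lambda^{(S)}_b(g_1,\dots,g_k)|$ by box/cut norms of $f_i-g_i$ is not legitimate as stated, and a $\nu$-weighted box norm of $f-g$ is not a quantity the dense model theorem makes small (it only controls correlations against bounded products). Getting from Definition \ref{def:linear-forms} alone to the counting statement is exactly the densification induction of Proposition \ref{prop:counting} (the truncation $\min(f_1',1)$, Lemma \ref{lem:nu1'l2}, and the second and third conditions); the alternative Green--Tao route through weighted Gowers norms needs correlation-type hypotheses the theorem does not grant. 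Similarly, obtaining a single bounded $g$ simultaneously good for all the relevant functionals requires the test class to be approximately closed under products at a smaller width (Lemma \ref{lem:FS-stable}) with a chain of widths $S=o(S')$, $S'=o(D)$, and then the passage to the counting norms via Lemmas \ref{lem:disc-pair-1}, \ref{lem:disc-pair-2} and \ref{lem:disc-pair}; ``the dense model theorem then supplies a single $g$'' elides all of this. In short, the proposal establishes the easy outer reduction but leaves the relative transference/counting step --- i.e.\ Theorem \ref{thm:transference}, via Propositions \ref{prop:approx-by-bounded} and \ref{prop:counting} --- unproved, so it does not yet constitute a proof of the theorem.
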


Here the counting function $\Lambda_D(f_1,\cdots,f_k)$ is defined by
\[  \Lambda_D(f_1,\cdots,f_k)=\E_{n\in G}\E_{d\in [D]} f_1(n)f_2(g+d)\cdots f_{k}(n+(k-1)d) \]
for $f_1,\cdots,f_k: G\rightarrow \R$, and the interval $[D]$ is embedded in $G$ in the obvious way.

See Definition \ref{def:linear-forms} below for the precise definition of the $k$-linear forms conditions with width $S$, which are analogues of the $k$-linear forms conditions needed in Conlon-Fox-Zhao's work \cite{CFZ13} in the narrow setting. 

%It is reasonable to expect that linear forms conditions with width larger than $D$ are not enough to control narrow progressions with common difference bounded by $D$, and thus the $o(D)$ bound in Theorem \ref{thm:relative-szemeredi} is sharp. This will be made rigorous in Section \ref{sec:sharp}.

\begin{remark}[The exponent $L_k$]
If the set $\CP$ in Theorem \ref{thm:narrow} is replaced by a random subset of $[N]$ with density $1/\log N$, then the statement holds with $L_k$ replaced by $k-1$ almost surely (see \cite[Proposition 2]{TZ14}). On the other hand, Theorem \ref{thm:narrow} fails if the exponent is smaller than $k-1$ (see \cite[Proposition 1]{TZ14}). In Remark \ref{rem:Ck} below, we will see that (the normalized characteristic function of) a random subset of $[N]$ with density $\alpha$ satisfies the $k$-linear forms conditions with width $O(\alpha^{-L_k})$, and moreover the exponent $L_k$ here is optimal. Thus if one tries to prove Theorem \ref{thm:narrow} with a smaller value of $L_k$ via a transference principle, it is necessary to seek for even more simplified linear forms conditions than those in \cite{CFZ13}.
\end{remark}

One might ultimately be interested in the case when $A = \CP$ is the set of all primes. The Hardy-Littlewood conjecture implies that there are infinitely many nontrivial $k$-APs in primes with common difference $O_k(1)$. This is only known unconditionally in the case $k=2$ thanks to recent breakthroughs by Zhang \cite{Zhang14} and by Maynard \cite{May15} (and by Tao independently), which asserts that there are infinitely many pairs of primes with bounded gap. The Hardy-Littlewood conjecture also predicts an asymptotic formula for the number of $k$-APs in primes up to $N$ of a given common difference. For the problem of counting all $k$-APs in primes (without any restrictions on $d$), and indeed for counting any linear pattern with \textit{finite complexity}, such an asymptotic formula is established in \cite{GT10} (with a crucial ingredient in \cite{GTZ12}). Finally, one could also ask for asymptotic formulas of this type with the Liouville function $\lambda$ or the Mobi\"{u}s function $\mu$ (in which case the main term should be zero). Strong results of this type are recently established  by Matom\"{a}ki-Radziwi{\l}{\l}-Tao \cite{MRT15} in the case $k=2$. They showed that
\[ \E_{d\in [D]} \left| \E_{n \in [N]} \mu(n) \mu(n+d) \right| = o(1) \]
as soon as $D\rightarrow\infty$, with a crucial input from \cite{MR15} regarding multiplicative functions in (very) short intervals.

\section{Outline of proof}\label{sec:outline}

{\em Conventions.} Throughout this paper we fix the positive integer $k \geq 2$. We always work in the cyclic group $G = \Z/N\Z$, where $N$ is always assumed to be prime and sufficiently large. An integer $n$ is also viewed as an element in $G$ in the natural way. We use $o(1)$ to denote a quantity that tends to zero as $N \rightarrow \infty$. For a vector $\ve{s}$, we always use $s_1, s_2,\cdots$ to denote its coordinates. Similarly, a vector $\ve{s}^{(\tau)}$ for some $\tau \in \{0,1\}$ has coordinates $s_1^{(\tau)}, s_2^{(\tau)}, \cdots$, and a vector $\ve{s}^{(\omega)}$ for some $\omega = (\omega_1,\omega_2,\cdots)$ has coordinates $s_1^{(\omega_1)}, s_2^{(\omega_2)}, \cdots$.

In this section we state the main ingredients in the proof of Theorems \ref{thm:narrow} and \ref{thm:relative-szemeredi}. We start by defining the $k$-linear forms conditions appearing in the statement of Theorem \ref{thm:relative-szemeredi} (compare with \cite[Definition 2.2]{CFZ13}).

\begin{definition}[Linear forms conditions]\label{def:linear-forms}
Let $k\geq 2$ be a positive integer. Let $N$ be prime and let $G = \Z/N\Z$. Let $S \geq 2$ be real. We say that a function $\nu:G\rightarrow\R$ satisfies the $k$-linear forms conditions with width $S$ if the following conditions hold.
\begin{enumerate}
\item For any convex body $\Omega \subset \R^{2k}$ with inradius $r(\Omega) \geq S$ and $\Omega \subset [-r(\Omega)^{O(1)}, r(\Omega)^{O(1)}]^{2k}$, we have
\[ \E_{n \in G}\E_{(\ve{s}^{(0)}, \ve{s}^{(1)}) \in \Omega \cap \Z^{2k}} \prod_{j=1}^k \prod_{\ve{\omega}\in \{0,1\}^{[k]\setminus\{j\}}} \nu \left( n+ \psi_j(\ve{s}^{(\ve{\omega})}) \right)^{e(j,\ve{\omega})} =1+o(1), \]
for each choice of $e(j,\ve{\omega})\in \{0,1\}$, where $\psi_j:\Z^k\rightarrow\Z$ is the linear form defined by
\begin{equation}\label{eq:psi-j} 
\psi_j(s_1,\cdots,s_k)=\sum_{i=1}^k (j-i) s_i. 
\end{equation}
\item For any convex body $\Omega \subset \R^{2k}$ with inradius $r(\Omega) \geq S$ and $\Omega \subset [-r(\Omega)^{O(1)}, r(\Omega)^{O(1)}]^{2k}$, we have
\[ \E_{n \in G} \E_{(\ve{s}^{(0)}, \ve{s}^{(1)}) \in \Omega \cap \Z^{2k}}  \prod_{\omega \in \{0,1\}^{k}} \nu\left( n + \psi(\ve{s}^{(\omega)}) \right)^{e(\omega)} = 1 + o(1), \]
for each choice of $e(\omega) \in \{0,1\}$, where $\psi:\Z^k\rightarrow\Z$ is the linear form defined by
\begin{equation}\label{eq:psi} 
\psi(s_1,\cdots,s_k) = k! \sum_{i=1}^k s_i.
\end{equation}
\item For any convex body $\Omega \subset \R^2$ with inradius $r(\Omega) \geq S$ and $\Omega \subset [-r(\Omega)^{O(1)}, r(\Omega)^{O(1)}]^{2}$, and any $1 \leq j \leq k$, we have
\[ \E_{n \in G} \nu(n)^{e} \sum_{(d^{(0)}, d^{(1)}) \in \Omega \cap \Z^2} \prod_{\substack{1 \leq i \leq k\\ i \neq j}} \prod_{\tau \in \{0,1\}} \nu(n + (i-j) d^{(\tau)})^{e(i,\tau)} = 1 + o(1), \]
for each choice of $e, e(i,\tau) \in \{0,1\}$.
\end{enumerate}
\end{definition}

In the first condition, since $\psi_j$ does not depend on the $j$th variable, $\psi_j(\ve{s}^{(\omega)})$ makes sense for $\omega\in\{0,1\}^{[k]\setminus\{j\}}$. As explained in \cite[Section 2.2]{CFZ13}, the first set of these linear forms conditions occur quite naturally, corresponding to $2$-blowups of triangles in appropriate hypergraphs.  These blowups are eventually responsible for the extra factor of $2^{k-2}$ in the exponent $L_k$. The presence of the other linear forms conditions are purely technical, coming from  extra manoeuvres required to deal with the narrow nature of the progressions. However, the value of $L_k$ depends critically on only the first set of conditions. 

\begin{example}\label{eg:k=3}
When $k=3$, the first condition in the $3$-linear forms conditions are saying that the product of the following $12$ terms:
\[  \begin{matrix} \nu(n-x_2-2x_3), & \nu(n-x_2-2y_3), & \nu(n-y_2-2x_3), & \nu(n-y_2-2y_3), \\
 \nu(n+x_1-x_3), & \nu(n+y_1-x_3), & \nu(n+x_1-y_3), & \nu(n+y_1-y_3), \\
 \nu(n+2x_1+x_2), & \nu(n+2y_1+x_2), & \nu(n+2x_1+y_2), & \nu(n+2y_1+y_2),
 \end{matrix} \] 
when averaged over $n\in G$ and $(x_1,x_2,x_3,y_1,y_2,y_3) \in \Omega \cap \Z^6$, is equal to $1+o(1)$. The same holds for the product of any subset of these $12$ terms.
\end{example}

The proof of the relative Szemer\'{e}di theorem for narrow progressions (Theorem \ref{thm:relative-szemeredi}) will be carried out in Sections \ref{sec:relative}-\ref{sec:counting-lemma}. While the proof of its global analogue (Theorem \ref{thm:gt-relative}) in \cite{CFZ13} proceeds by passing to the corresponding counting problem in hypergraphs, we are unable to find a good graph model for counting narrow progressions. We thus proceed entirely in the arithmetic setting, motivated by the work of Zhao \cite{Zhao14}.

\begin{remark}\label{rem:Ck}
Now that the $k$-linear forms conditions are precisely defined, let us explain why any majorant $\nu$ for the primes should not satisfy the $k$-linear forms conditions with width $S$ below $(\log N)^{L_k}$.  We illustrate this with the example $k=3$ and $L_3=4$, and recall the linear forms in Example \ref{eg:k=3}. Consider the contribution from those terms with $x_1=y_1$. Under this restriction, four pairs of these linear forms take the same values. Since $\nu^2$ should have average about $\log N$, the average over all terms with $x_1=y_1$ should have size about $(\log N)^4$. Thus if $S$ is smaller than $(\log N)^4$, these contributions will dominate and the linear forms conditions fail. Similarly, for general $k$, the restriction $x_1=y_1$ creates $L_k=(k-1)2^{k-2}$ pairs of linear forms having the same value, and thus $S$ must be larger than $(\log N)^{L_k}$. The same argument also shows that, if $\nu$ is the normalized characteristic function of a random subset of $[N]$ with density $\alpha$, then it does not satisfy the $k$-linear forms conditions with width $S$ below $\alpha^{-L_k}$.
\end{remark}

This remark motivates the following definition.

\begin{definition}\label{def:rep}
Let $\Psi = (\psi_1,\cdots,\psi_t):\Z^d\rightarrow\Z^t$ be a system of distinct affine linear forms in $d$ variables $\ve{x}=(x_1,\cdots,x_d)$. For any $I \subset [t]$, let $\Psi_I = \{\psi_i : i \in I\}$ and define
\[ \Pi(\Psi_I) = \{ \ve{x}\in\R^d : \psi_i(\ve{x}) = \psi_j(\ve{x}) \text{ whenever } i,j \in I \}. \]
Furthermore, for any partition $\pi$ of $[t]$ (so that $\pi$ is a collection of disjoint subsets of $[t]$ whose union is $[t]$), define
\[ \Pi(\Psi, \pi) = \bigcap_{I \in \pi} \Pi(\Psi_I). \]
Finally, define
\[ L(\Psi) = \sup_{|\pi|<t} \frac{t-|\pi|}{\codim \Pi(\Psi,\pi)}, \]
where $|\pi|$ denotes the number of subsets in the partition $\pi$, and the supremum is taken over all partitions $\pi$ of $[t]$ with $|\pi| < t$.
\end{definition}

The denominator $\codim \Pi(\Psi,\pi)$ is the smallest number of independent linear conditions on $x_1,\cdots,x_d$ needed to create a linear subvariety on which linear forms from the same atom of $\pi$ are identical. By convention we set $\codim \Pi(\Psi,\pi) = \infty$ if $\Pi(\Psi,\pi) = \emptyset$. Since $\Psi$ consists of distinct linear forms, this codimension is positive whenever $|\pi| <t$. If $\Psi_k$ is the collection of linear forms appearing in the first set of $k$-linear forms conditions, then $L(\Psi_k) \geq L_k$ by Remark \ref{rem:Ck}. We will show in Section \ref{sec:linear-algebra} that equality holds.

\begin{proposition}\label{thm:rep-index}
Let $k\geq 2$ be a positive integer, and let $\Psi$ be the system of linear forms appearing in the first condition in the $k$-linear forms conditions. More precisely, $\Psi$ is the collection of linear forms $\psi$ in $2k$ variables $\ve{s}^{(0)} = (s_1^{(0)},s_2^{(0)},\cdots,s_k^{(0)})$ and $\ve{s}^{(1)} = (s_1^{(1)},s_2^{(1)},\cdots,s_k^{(1)})$ of the form
\[ \psi( \ve{s}^{(0)}, \ve{s}^{(1)} ) = \psi_j(\ve{s}^{(\omega)}) \]
for some $1\leq j\leq k$ and $\omega \in \{0,1\}^{[k]\setminus\{j\}}$, where $\psi_j$ is defined in \eqref{eq:psi-j}. Then $L(\Psi) = L_k = (k-1)2^{k-2}$.
\end{proposition}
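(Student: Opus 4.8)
The plan is to recast $L(\Psi)$ as a quantity measuring how efficiently the forms of $\Psi$ can be collapsed by a linear subspace of the dual space, to read off the lower bound $L(\Psi)\ge L_k$ from Remark~\ref{rem:Ck}, and then to prove the matching upper bound by a direct combinatorial analysis exploiting the $2$-blow-up structure of $\Psi$. For the reformulation, write $t=|\Psi|=k2^{k-1}$ and regard each $\psi\in\Psi$ as a vector in $(\R^{2k})^*$. For a partition $\pi$ of $\Psi$ the variety $\Pi(\Psi,\pi)$ is the annihilator of $W_\pi:=\sum_{I\in\pi}\mathrm{span}\{\psi-\psi':\psi,\psi'\in I\}$, so $\codim\Pi(\Psi,\pi)=\dim W_\pi$. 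For a subspace $U\subseteq(\R^{2k})^*$ let $m(U)$ be the number of classes of the relation $\psi\sim\psi'\iff\psi-\psi'\in U$ on $\Psi$. If $U=W_\pi$ then every atom of $\pi$ lies in a single $\sim$-class, so $t-|\pi|=\sum_I(|I|-1)\le t-m(U)$; conversely, if $U\ne0$ is spanned by differences of forms and $\pi$ is the partition into $\sim$-classes, then $W_\pi=U$ and $|\pi|=m(U)$. Hence
\[ L(\Psi)=\sup_{U}\frac{t-m(U)}{\dim U}, \]
the supremum over nonzero $U$ spanned by differences of forms. The choice $U=\langle s_1^{(0)}-s_1^{(1)}\rangle$ then reproves $L(\Psi)\ge L_k$ exactly as in Remark~\ref{rem:Ck}: the $j=1$ forms are untouched and pairwise inequivalent, and for each of the $k-1$ indices $j\ne1$ the $2^{k-1}$ forms collapse in pairs, so $m(U)=2^{k-1}+(k-1)2^{k-2}$ and $(t-m(U))/\dim U=L_k$.

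\textbf{Reductions and the separated case.} For the upper bound, note first that $m(U)\ge1$, so $t-m(U)\le t-1$; since $L_k\dim U\ge 3L_k>t-1$ once $\dim U\ge3$ (for $k\ge3$; the case $k=2$ is immediate as the forms are then linearly independent), only $\dim U\in\{1,2\}$ need genuine attention. Second, a short computation identifies $\mathrm{span}(\Psi)=\langle\delta_1,\dots,\delta_k,\sigma_0,\tau_0\rangle$ with $\delta_i=s_i^{(0)}-s_i^{(1)}$, $\sigma_0=\sum_i s_i^{(0)}$, $\tau_0=\sum_i i s_i^{(0)}$, and shows that every difference of forms already lies in $\langle\delta_1,\dots,\delta_k,\sigma_0\rangle$. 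Thus (after reducing to $U$ spanned by differences) $U\subseteq\langle\delta_1,\dots,\delta_k,\sigma_0\rangle$, and with $U_\Delta:=U\cap\langle\delta_1,\dots,\delta_k\rangle$ there are two cases. If $U=U_\Delta$, then no two forms from distinct groups $\mathcal{G}_j=\{\psi_j(\ve s^{(\omega)}):\omega\in\{0,1\}^{[k]\setminus\{j\}}\}$ can be equivalent (their difference has a nonzero $\sigma_0$-component), so $m(U)=\sum_j m_j$, where $m_j$ is the number of distinct images of the cube $\{0,1\}^{[k]\setminus\{j\}}$ under a linear map with kernel of dimension $c_j:=\dim(U\cap\langle\delta_i:i\ne j\rangle)$. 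Using the elementary fact that a linear surjection onto a $d$-dimensional space, restricted to the cube $\{0,1\}^n$ with $n\ge d$, takes at least $2^d$ values (restrict to $d$ coordinates whose images form a basis), one gets $m_j\ge2^{(k-1)-c_j}$. Since $\langle\delta_i:i\ne j\rangle$ is a hyperplane in $\langle\delta_1,\dots,\delta_k\rangle$ we have $c_j\in\{\dim U-1,\dim U\}$, with $c_j=\dim U$ for at most $k-\dim U$ values of $j$ (the coordinate functionals $\delta_i^*|_U$ span $U^*$); substituting into $t-m(U)\le\sum_j 2^{k-1}(1-2^{-c_j})$ and optimising yields $t-m(U)\le L_k\dim U$, with equality precisely at $\dim U=1$, $U=\langle\delta_i\rangle$.

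\textbf{The mixed case and the main obstacle.} The remaining case is $\dim U=\dim U_\Delta+1$, with the extra direction pointing toward $\sigma_0$; here forms from different groups can become equivalent, which is exactly the extra collapsing afforded by the extra dimension. Writing $U=U_\Delta\oplus\langle w\rangle$ with $w\equiv\sigma_0\pmod{\langle\delta_i\rangle}$ and letting $\ve\rho\in\langle\delta_i\rangle$ be the $\langle\delta_i\rangle$-part of $w$, one checks (pushing everything through the quotient by $U_\Delta$) that $m(U)$ equals the number of cosets of $U_\Delta$ met by the translated cube-images $B_j:=j\ve\rho+C_j$, where $C_j=\{\sum_{i\in E}(j-i)\delta_i:E\subseteq[k]\setminus\{j\}\}$; distinct groups' images may now overlap modulo $U_\Delta$, and this overlap is what accounts for the cross-group collapsing. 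I would bound it by the Bonferroni estimate
\[ m(U)\ \ge\ \sum_{j}|C_j\bmod U_\Delta|\ -\ \sum_{j<j'}\bigl|(B_j\bmod U_\Delta)\cap(B_{j'}\bmod U_\Delta)\bigr|, \]
handling the first sum by the separated-case analysis applied to $U_\Delta$ (of dimension $\dim U-1$), which gives $\sum_j|C_j\bmod U_\Delta|\ge t-L_k(\dim U-1)$, and bounding each pairwise overlap by the rigidity of the constraints defining $B_j$ (a point of $B_j$ has its $j$-th coordinate pinned to $j\rho_j$, and the remaining coordinates are nearly forced), so that the overlaps sum to at most $L_k$. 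Together these give $t-m(U)\le L_k\dim U$, finishing the proof. The step I expect to be the real obstacle is making the pairwise-overlap estimate uniform over all admissible $\ve\rho$ — in particular over the special rational $\ve\rho$ at which several overlaps are simultaneously nonzero — although the reduction to $\dim U\le 2$, hence $\dim U_\Delta\le 1$, keeps this bookkeeping finite.
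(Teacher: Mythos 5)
Your dual reformulation of $L(\Psi)$, the lower bound via $U=\langle s_1^{(0)}-s_1^{(1)}\rangle$, the reduction to $\dim U\le 2$, and the separated case $U=U_\Delta\subseteq\langle\delta_1,\dots,\delta_k\rangle$ are all sound (the cube-image bound $m_j\ge 2^{k-1-c_j}$ plus $\#\{j:c_j=\dim U\}\le k-\dim U$ does give $t-m(U)\le L_k\dim U$ there). The gap is exactly where you flagged it: in the mixed case, the claim that $\sum_{j<j'}\bigl|(B_j\bmod U_\Delta)\cap(B_{j'}\bmod U_\Delta)\bigr|\le L_k$ is false. Take $k=3$, $\ve\rho=0$ and $U_\Delta=\langle v\rangle$ with $v=2\delta_1+\delta_2+2\delta_3$, i.e.\ $U=\mathrm{span}(\sigma_0,v)$; this $U$ is spanned by differences of forms of $\Psi$, since $\psi_1(\ve s^{(0)})-\psi_3(\ve s^{(0)})=-2\sigma_0$ and $(-s_2^{(0)}-2s_3^{(1)})-(2s_1^{(1)}+s_2^{(1)})=-2\sigma_0+v$. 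Here $B_j=C_j$, with $C_1=\{0,-\delta_2,-2\delta_3,-\delta_2-2\delta_3\}$ and $C_3=\{0,2\delta_1,\delta_2,2\delta_1+\delta_2\}$, and modulo $v$ one has $-2\delta_3\equiv 2\delta_1+\delta_2$ and $-\delta_2-2\delta_3\equiv 2\delta_1$, so $|(B_1\bmod U_\Delta)\cap(B_3\bmod U_\Delta)|=3$; the other two pairs each share at least the zero coset, so the overlap sum is at least $5>L_3=4$. (The conclusion of the proposition is of course still true for this $U$: one computes $m(U)=8$, but only because there is no within-group collapse here, $\sum_j|B_j\bmod U_\Delta|=t$; your two estimates are not simultaneously valid, and the proposal contains no argument trading large cross-group overlap against small within-group collapse.)

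The heuristic you give for the overlap bound ("the $j$-th coordinate is pinned and the rest are nearly forced") breaks precisely because the quotient is by a line $U_\Delta$ in a non-coordinate direction, which can connect far-apart vertices of the cubes $C_j$ and $C_{j'}$ — this is what the example exploits. Note also that whenever $\sigma_0\in U$ the zero coset lies in every $A_j$, so a coset shared by all $k$ groups contributes $k$ to $\sum_j|A_j|$ but $\binom{k}{2}$ to the subtracted term; truncated inclusion--exclusion therefore degrades rapidly, and no uniform bound of the form "overlaps sum to at most $L_k$" can be expected. This mixed case is exactly where the paper's real work lies (Lemma \ref{lem:3forms}: three forms coinciding on a codimension-$1$ subvariety must share a common set of variables; Lemma \ref{lem:5forms} for codimension $2$; and Lemma \ref{lem:kforms}, bounding by $k$ the number of forms that can coincide after restriction to a hyperplane $\Pi_I$), and some structural input of that kind, rather than pairwise Bonferroni bookkeeping, appears to be needed to close your argument.
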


This proposition explains the occurrence of $L_k$ in Theorem \ref{thm:narrow}. In principle we also need to evaluate $L(\Psi)$ for the systems $\Psi$ appearing in the second and the third set of $k$-linear forms conditions. These tasks are much easier. A moment's thought reveals that $L(\Psi) = 2^{k-1}$ when $\Psi$ is the system in the second set of conditions, and $L(\Psi) = k-1$ when $\Psi$ is the system in the third set of conditions. Both values are at most $(k-1)2^{k-2}$ when $k \geq 3$.

In order to apply Theorem \ref{thm:relative-szemeredi} we also need a majorant $\nu$ for the ($W$-tricked) primes satisfying the $k$-linear forms conditions. The idea of using a smoothly truncated version of Selberg's weight was first consider by Goldston-Pintz-Y{\i}ld{\i}r{\i}m \cite{GPY09,GY07}. See also \cite[Appendix D]{GT10} and the note \cite{Tao07}. In Sections \ref{sec:majorant}-\ref{sec:cor} we will review the basic properties of this majorant and prove that it satisfies the $k$-linear forms conditions.

\begin{proposition}[Pseudorandom majorants]\label{thm:majorant}
Fix a positive integer $t_0$.  Let $N$ be prime and sufficiently large, and let $G = \Z/N\Z$. Let $w \leq 0.1\log\log N$ be a slowly growing function of $N$, and let $W = \prod_{p\leq w}p$. Take any reduced residue class $b\pmod W$. There exists a function $\nu = \nu_{W,b} : G\rightarrow\R$ satisfying the following conditions.
\begin{enumerate}
\item $\nu_{W,b}(n) \geq 0$ for any $n$ and moreover
\[ \nu_{W,b}(n) \geq \frac{c\varphi(W)\log N}{W} \]
for some constant $c = c(t_0) > 0$, whenever $Wn+b$ is prime and $Wn+b > N^{1/2}$. 
\item For any system of distinct affine linear forms $\Psi = (\psi_1,\cdots,\psi_t): \Z^d\rightarrow \Z^t$ with $t \leq t_0$, and any convex body $\Omega \subset \R^d$ with inradius $r(\Omega)$ and $\Omega \subset [-r(\Omega)^{O(1)}, r(\Omega)^{O(1)}]^{d}$, such that $r(\Omega) \geq g(N)(\log N)^{L(\Psi)}$ for a function $g$ satisfying $g(N)\rightarrow\infty$ as $N\rightarrow\infty$, we have
\[ \E_{n \in G} \E_{\ve{x} \in \Omega \cap \Z^d} \prod_{i=1}^t \nu_{W,b}(n+\psi_i(\ve{x})) = 1 + o_{\Psi;N\rightarrow\infty}(1). \]
\end{enumerate}
\end{proposition}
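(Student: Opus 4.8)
The plan is to construct $\nu_{W,b}$ as a smoothly truncated Selberg-type divisor sum, following the Goldston--Pintz--Y\i ld\i r\i m / Green--Tao recipe. Concretely, fix a smooth function $\chi:\R\to[0,\infty)$ supported on $[-1,1]$ with $\chi(0)=1$, set $R = N^{\eta}$ for a small constant $\eta = \eta(t_0)>0$, and define
\[ \Lambda_\chi(m) = \left( \sum_{d\mid m} \mu(d)\chi\!\left(\frac{\log d}{\log R}\right)\right)^2. \]
Then put $\nu_{W,b}(n) = \frac{\varphi(W)}{W}\,\frac{\log R}{\mathfrak{S}_\chi}\,\Lambda_\chi(Wn+b)$, where $\mathfrak{S}_\chi$ is the normalizing constant $\int_0^\infty \chi'(x)^2\,dx$ (or the appropriate variant), chosen so that $\E_{n} \nu_{W,b}(n) = 1+o(1)$. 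The lower bound in the first item is immediate: when $Wn+b = p$ is a prime exceeding $N^{1/2} > R$, the only divisor $d\le R$ of $p$ is $d=1$, so $\Lambda_\chi(p) = \chi(0)^2 = 1$, giving $\nu_{W,b}(n)\ge c\varphi(W)\log N/W$ once $\eta$ is fixed. So almost all of the work is the second item.

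For the linear forms estimate I would expand the square in each factor $\Lambda_\chi(n+\psi_i(\ve x))$, obtaining a sum over divisors $d_i, d_i'$ with $d_i, d_i' \le R$, and interchange the $n$- and $\ve x$-averages to the outside with the divisor sum inside. The inner average $\E_{n\in G}\E_{\ve x\in\Omega\cap\Z^d}$ of the product of indicator conditions $[d_i \mid n+\psi_i(\ve x)]$, $[d_i'\mid n+\psi_i(\ve x)]$ becomes, after using coprimality reductions and the Chinese remainder theorem, a main term of the shape $\prod_i \frac{1}{[d_i,d_i']}$ times a local density factor, plus an error. Here the width hypothesis $r(\Omega)\ge g(N)(\log N)^{L(\Psi)}$ enters exactly in controlling the discrepancy between $\E_{\ve x\in\Omega\cap\Z^d}$ of a periodic function and its mean: by a lattice-point count on convex bodies (Green--Tao's Appendix, or a Koksma--Hlawka / Fourier argument), the error is $O(\text{modulus} / r(\Omega) \cdot (\text{surface terms}))$, and since the relevant moduli are products of at most $t$ squarefree integers up to $R = N^\eta$, and $L(\Psi)$ is precisely the exponent measuring how many linear forms can be forced to coincide on a codimension-one subvariety, one checks that the threshold $(\log N)^{L(\Psi)}$ (times any $g(N)\to\infty$) suffices to make all such error terms $o(1)$. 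This is where Definition \ref{def:rep} is used: the partitions $\pi$ parametrize exactly the ``diagonal'' degeneracies where collapsing forms inflate the count, and $L(\Psi)$ being the sup of $(t-|\pi|)/\codim$ is what guarantees the off-diagonal main term dominates once the box is wide enough.

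Having reduced to the main term, the remaining computation is the classical GPY/Green--Tao Euler-product manipulation: one is left with
\[ \frac{1}{\mathfrak{S}_\chi^{\,t}}\left(\frac{\log R}{1}\right)^{t}\frac{\varphi(W)^t}{W^t}\sum_{d_1,d_1',\dots,d_t,d_t'} \frac{\prod_i \mu(d_i)\mu(d_i')\chi(\tfrac{\log d_i}{\log R})\chi(\tfrac{\log d_i'}{\log R})}{[\text{suitable denominators}]}\,\beta(\ve d,\ve d'), \]
where $\beta$ is a multiplicative local factor that is $1+O(p^{-2})$ on primes $p > w$ and vanishes on $p\le w$ (this is the point of the $W$-trick: the forms $\psi_i$ are distinct mod every small prime $p\le w$, so no unexpected congruence coincidences survive). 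Evaluating this via Mellin transforms / contour shifting (writing $\chi(\tfrac{\log d}{\log R}) = \frac{1}{2\pi i}\int \hat\chi(z) d^{-z/\log R}\,dz$ and picking up the pole structure) produces the singular series, which cancels against $\mathfrak{S}_\chi^{-t}$ by the choice of normalization, leaving $1+o(1)$. I expect the main obstacle to be the error-term bookkeeping in the lattice-point step: one must keep the moduli of size $\le R^{O(1)} = N^{O(\eta)}$, track the surface-area contributions of $\Omega$ under the hypothesis $\Omega\subset[-r^{O(1)},r^{O(1)}]^d$, and verify uniformly over all degenerate partitions $\pi$ that the width threshold $(\log N)^{L(\Psi)}$ is exactly right — being careful that $L(\Psi)$ and not some larger exponent is what is forced, since that exact value is what feeds into $L_k$ in Theorem \ref{thm:narrow} via Proposition \ref{thm:rep-index}. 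The honest-majorant versus smooth-truncation distinction also requires care: one should arrange $\nu_{W,b}$ so that the \emph{same} function simultaneously satisfies all instances with $t\le t_0$, which is why $t_0$ is fixed in advance and $\eta$ chosen small relative to it.
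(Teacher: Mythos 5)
Your construction of $\nu_{W,b}$ (the squared, smoothly truncated, $W$-tricked divisor sum with $R=N^{\eta(t_0)}$) and your treatment of item (1) coincide with the paper's. But there is a genuine gap in your plan for item (2), and it sits exactly at the step you flag as "error-term bookkeeping". After expanding the squares, the congruence conditions involve moduli $[d_i,d_i']$ which are squarefree integers as large as $R^{2}=N^{O(\eta)}$, while the convex body $\Omega$ is only required to have inradius $r(\Omega)\geq g(N)(\log N)^{L(\Psi)}$, i.e.\ polylogarithmic in $N$. A discrepancy bound of the shape $O(\mathrm{modulus}/r(\Omega))$ is then astronomically large rather than $o(1)$: a box of polylogarithmic side length cannot equidistribute modulo $N^{O(\eta)}$, so the claim that "the threshold $(\log N)^{L(\Psi)}$ suffices to make all such error terms $o(1)$" fails for every non-trivial divisor term. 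This is precisely the difficulty created by the narrow setting, and it is what distinguishes this proposition from the standard Green--Tao linear forms estimate, where the $\ve{x}$-average runs over a box of size comparable to $N$.

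The paper's proof is organized so that equidistribution of $\ve{x}$ is never needed modulo anything larger than a small power of $r(\Omega)$. For each fixed $\ve{x}$ one first performs the complete $n$-average over $G=\Z/N\Z$ (of size $N$, huge compared with $R$) via the correlation estimate for $\Lambda_{\chi,R}$ (Proposition \ref{prop:correlation-Lambda}, imported from Tao's note), which absorbs the entire divisor sum and outputs $c_{\chi}(\cdot)\,\singular_W(\Psi(\ve{x}))+o(E(\Psi(\ve{x})))$ plus a negligible $O(N^{-1}R^{O(1)})$ term. What remains is to average the singular series and the factor $E(\Psi(\ve{x}))$ over $\ve{x}$ in the narrow box; this is done by a Gallagher-type lemma (Proposition \ref{prop:average-singular}, Corollary \ref{cor:average-singular}): writing $g(\ve{h})=\sum_{q\mid\Delta(\ve{h})}a_{\ve{h}}(q)$ with $a_{\ve{h}}(q)\ll C^{\omega(q)}/q$, one truncates at $Q=r(\Omega)^{0.1}$, the tail costing only $O(Q^{-1}|\Delta(\ve{h})|^{\varepsilon})$, so that only equidistribution of $\Omega\cap X$ modulo squarefree $q\leq Q\leq r(\Omega)$ is required (Corollary \ref{cor:equi}). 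Your intuition about the diagonal degeneracies is correct and matches the paper: Lemma \ref{lem:sub-lattice} gives $|\Omega\cap X|/|\Omega\cap\Z^d|\ll r(\Omega)^{-\codim X}$, which beats the $(\log R)^{t-s}$ inflation exactly when $r(\Omega)\gg(\log N)^{L(\Psi)}$, by Definition \ref{def:rep}. But without restructuring the argument as above --- $n$-average first to produce singular series, then a small-modulus truncation for the $\ve{x}$-average --- the CRT-plus-discrepancy plan as written does not close.
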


In view of Proposition \ref{thm:rep-index} and the remark following it, this implies that the function $\nu_{W,b}$ satisfies the $k$-linear forms conditions with width $g(N)(\log N)^{L_k}$.
We now have all the ingredients needed to deduce Theorem \ref{thm:narrow}.

\begin{proof}[Deduction of Theorem \ref{thm:narrow} from 
Theorem~\ref{thm:relative-szemeredi} assuming Propositions \ref{thm:rep-index} and \ref{thm:majorant}]
We may assume that $k \geq 3$, as the statement is trivial when $k=2$. By a diagonalization argument, it suffices to prove the statement with $|d| \leq g(N) (\log N)^{L_k}$ for any slowly growing function $g$ and large $N$.  
Let $w = w(N) \leq 0.1 \log g(N)$ be a slowly growing function and let $W = \prod_{p\leq w}p$, so that $W \leq g(N)^{1/2}$.
Choose a prime $N' \in [2N/W,4N/W]$, and let $G = \Z/N'\Z$. By the pigeonhole principle, we may choose a reduced residue class $b\pmod W$ such that 
\begin{equation}\label{eq:bmodW} 
A \cap \{p\in\CP: p\equiv b\pmod W\text{ and }p>N^{1/2}\} \geq  \frac{ \delta N}{\varphi(W)\log N} - N^{1/2} \geq \frac{\delta N}{2\varphi(W)\log N}.
\end{equation}
Let $\nu = \nu_{W,b}:G\rightarrow\R$ be the majorant from Proposition~\ref{thm:majorant}, and let $f:G\rightarrow\R$ be the function defined by
\[ f(n) = \begin{cases} \frac{c\varphi(W)\log N'}{W} & Wn+b\in A\text{ and }Wn+b>N'^{1/2} \\ 0 & \text{otherwise,} \end{cases} \]
where $c = c(k) > 0$ is sufficiently small. Then $0\leq f\leq\nu$. Moreover, from \eqref{eq:bmodW} we obtain
\[ \E_{n\in G} f(n) \geq \frac{1}{N'}\cdot \frac{c\varphi(W)\log N}{W} \cdot \frac{ \delta N}{2\varphi(W)\log N} \geq \frac{c\delta}{10}. \]
Set $S =  g(N)^{1/4}(\log N)^{L_k}$ and $D = \lfloor g(N)^{1/2} (\log N)^{L_k}\rfloor$.  Since $\nu$ satisfies the $k$-linear forms conditions with width $S$ (see the remark following Proposition~\ref{thm:majorant}), we may apply Theorem \ref{thm:relative-szemeredi} to conclude that $\Lambda_D(f,\cdots,f) \gg_{\delta} 1$. In other words, there exist $k$-APs $n,n+d,\cdots,n+(k-1)d$ with $1\leq d\leq D$ such that each $n+jd$ ($0\leq j\leq k-1$) lies in the support of $f$. Each such $k$-AP gives rise to a $k$-AP $Wn+b,W(n+d)+b,\cdots,W(n+(k-1)d)+b$ in $A$, with step $Wd \leq WD \leq g(N) (\log N)^{L_k}$, as desired.
\end{proof}

\section{The truncated von-Mangoldt function and the prime majorant}\label{sec:majorant}

We construct the majorant $\nu$ required in Proposition \ref{thm:majorant} as follows.
Let $R \leq N^{1/2}$ be a parameter and let $\chi:\R\rightarrow\R$ be a smooth function supported on $[-1,1]$. Assume that $\chi(0)\geq 1/2$ and moreover
\begin{equation}\label{eq:c2} 
\int_{-\infty}^{\infty}|\chi'(t)|^2dt=1. 
\end{equation}
Define the truncated von-Mangoldt function $\Lambda_{\chi,R}$ with parameter $R$ and the smooth cutoff $\chi$ by the formula
\begin{equation}\label{eq:Lambda} 
\Lambda_{\chi,R}(n)=\log R\sum_{d\mid n}\mu(d)\chi\left(\frac{\log d}{\log R}\right). 
\end{equation}
Note that if $n$ is prime and $n>R$, then $\Lambda_{\chi,R}(n) = \chi(0)\log R \geq (\log R)/2$. Define the majorant $\nu_{\chi,R,W,b}: G\rightarrow\R$ by the formula
\begin{equation}\label{eq:nu} 
\nu_{\chi,R,W,b}(n)=\frac{\varphi(W)}{W\log R}\Lambda_{\chi,R}(Wn+b)^2.
\end{equation}
It is clearly non-negative, and satisfies 
\begin{equation}\label{eq:majorize} 
\nu_{\chi,R,W,b}(n) \geq \frac{\varphi(W)\log R}{4W},
\end{equation}
whenever $Wn+b$ is prime and $Wn+b>R$. The smoothly truncated nature of $\chi$ allows us to obtain precise asymptotic formulas for correlation estimates involving $\Lambda_{\chi,R}$. First we need some definitions.

\begin{definition}[Singular series]\label{def:singular}
For a vector $\ve{h} = (h_1,\cdots,h_k) \in \Z^k$ we define the singular series
\[ \singular(\ve{h})=\prod_p \left(1-\frac{1}{p}\right)^{-r}\left(1-\frac{\nu_p(\ve{h})}{p}\right), \]
where $r = \#\{h_1,\cdots,h_k\}$ and $\nu_p(\ve{h})$ is the number of residue classes modulo $p$ occupied by elements in $h_1,\cdots,h_k$. For a positive integer $W$, define also the $W$-tricked singular series
\[ \singular_W(H)=\prod_{p\nmid W} \left(1-\frac{1}{p}\right)^{-r}\left(1-\frac{\nu_p(\ve{h})}{p}\right). \]
\end{definition}

\begin{definition}[Sieve factor]\label{def:sieve-factor}
Let $\chi:\R\rightarrow\R$ be a smooth compactly supported function. For any positive integer $m$, we define the sieve factor
\[ c_{\chi,m}=\int_{\R}\cdots\int_{\R} \prod_{I\subset [m]:I\neq\emptyset} \left( \sum_{j\in I} (1+it_j) \right)^{(-1)^{|I|-1}} \prod_{j=1}^m \psi(t_j) dt_j, \]
where the function $\psi:\R\rightarrow\R$ is defined by the relation
\[ e^x\chi(x)=\int_{-\infty}^{\infty} \psi(t)e^{-ixt} dt. \]
More generally, for a vector $\ve{h}=(h_1,\cdots,h_k)\in \Z^k$, define the sieve factor
\[ c_{\chi}(\ve{h})=\prod_{h\in \{h_1,\cdots,h_k\}} c_{\chi}(m(h)), \]
where $m(h)=\#\{1\leq i\leq k:h_i=h\}$.
\end{definition}

We will not directly need the precise definition of $c_{\chi,m}$, apart from the fact that $c_{\chi,2}=1$, a consequence of the normalization \eqref{eq:c2}.

\begin{proposition}[Correlation estimates for $\Lambda_{\chi,R}$]\label{prop:correlation-Lambda}
Let $N,W$ be positive integers and let $b\pmod W$ be a reduced residue class.
Let $\Lambda_{\chi,R}$ be defined as in \eqref{eq:Lambda}. Let $\ve{h}=(h_1,h_2,\cdots,h_k)\in\Z^k$. Then
\[ \E_{n\leq N}\prod_{i=1}^k\Lambda_{\chi,R}(W(n+h_i)+b)=\left(\frac{W}{\varphi(W)}\right)^r(c_{\chi}(\ve{h})\singular_W(\ve{h})+o(E(\ve{h})))(\log R)^{k-r}+O(N^{-1}R^k(\log R)^k), \]
where $r=\#\{h_1,\cdots,h_k\}$, and
\[ E(\ve{h})=\exp\left( O\left( \sum_{p\mid \Delta(\ve{h})}\frac{1}{p} \right) \right) \]
with
\begin{equation}\label{eq:Delta} 
\Delta(\ve{h})=\prod_{\substack{1\leq i<j\leq k\\ h_i\neq h_j}}(h_i-h_j). 
\end{equation}
\end{proposition}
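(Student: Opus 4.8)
The plan is to carry out the classical Goldston--Pintz--Y{\i}ld{\i}r{\i}m / Green--Tao analysis of correlations of truncated von Mangoldt functions, built around the ``Laplace transform'' identity $\chi\bigl(\tfrac{\log d}{\log R}\bigr)=\int_{\R}\psi(t)\,d^{-(1+it)/\log R}\,dt$ --- which is just the defining relation $e^{x}\chi(x)=\int\psi(t)e^{-ixt}\,dt$ evaluated at $x=\tfrac{\log d}{\log R}$, and since $e^{x}\chi(x)$ is smooth and compactly supported, $\psi$ decays faster than any polynomial. Throughout I write $r=\#\{h_{1},\dots,h_{k}\}$, let $J_{1},\dots,J_{r}$ be the partition of $[k]$ grouping the indices $i$ with equal $h_{i}$ (so $|J_{l}|=m_{l}$ and $c_{\chi}(\ve h)=\prod_{l=1}^{r}c_{\chi,m_{l}}$), and set $z_{i}=(1+it_{i})/\log R$, $z_{T}=\sum_{i\in T}z_{i}$ and $w_{T}=(\log R)z_{T}=\sum_{i\in T}(1+it_{i})$.

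First I expand each $\Lambda_{\chi,R}(W(n+h_{i})+b)$ via \eqref{eq:Lambda} and average over $n\leq N$, so the left-hand side becomes $(\log R)^{k}$ times $\sum_{d_{1},\dots,d_{k}}\prod_{i}\mu(d_{i})\chi(\tfrac{\log d_{i}}{\log R})$ weighted by the average over $n\leq N$ of the indicator of the congruence system $d_{i}\mid W(n+h_{i})+b$ $(1\leq i\leq k)$. A prime $p\mid(d_{i},W)$ would force $p\mid b$, contradicting $(b,W)=1$, so only the squarefree $d_{i}$ coprime to $W$ contribute; for such a tuple the system has a unique solution modulo $q:=[d_{1},\dots,d_{k}]$ exactly when it is consistent --- i.e.\ when $h_{i}\equiv h_{j}\pmod p$ whenever a prime $p$ divides both $d_{i}$ and $d_{j}$ --- and none otherwise, so the average equals $q^{-1}+O(N^{-1})$ in the consistent case. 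The $O(N^{-1})$ errors, summed over the $\leq R^{k}$ tuples (each $d_{i}\leq R$) and weighted by $(\log R)^{k}\int|\psi|^{k}$, contribute the stated $O(N^{-1}R^{k}(\log R)^{k})$. Inserting the Laplace identity for each $\chi$ (the terms with $d_{i}>R$ then vanish, so the $d_{i}$-sums may be extended to all $d_{i}\geq1$, and Fubini applies since $\mathrm{Re}(z_{i})=1/\log R>0$), the main term becomes $(\log R)^{k}\int_{\R^{k}}\bigl(\prod_{i}\psi(t_{i})\bigr)\mathcal S(\ve z)\,d\ve t$, where the multiplicativity of the summand in $(d_{1},\dots,d_{k})$ gives
\[ \mathcal S(\ve z)=\prod_{p\nmid W}\Bigl(1+\frac1p\sum_{\substack{\emptyset\neq I\subseteq[k]\\ h_{i}\equiv h_{j}\ (p)\ \forall\,i,j\in I}}(-1)^{|I|}\prod_{i\in I}p^{-z_{i}}\Bigr). \]

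For $p\nmid W\Delta(\ve h)$ the admissible $I$ are precisely the nonempty subsets of a single class $J_{l}$, and expanding to first order in $1/p$ shows the local factor equals $\prod_{l}\prod_{\emptyset\neq T\subseteq J_{l}}(1-p^{-1-z_{T}})^{(-1)^{|T|-1}}$ times $1+O_{k}(p^{-2})$, uniformly for $\mathrm{Re}(z_{i})\geq0$. Dividing $\mathcal S(\ve z)$ by the Riemann zeta factors $\zeta(1+z_{T})^{(-1)^{|T|}}=\prod_{p}(1-p^{-1-z_{T}})^{(-1)^{|T|-1}}$ (products over all primes) therefore leaves
\[ \mathcal S(\ve z)=\mathfrak E(\ve z)\prod_{l=1}^{r}\prod_{\emptyset\neq T\subseteq J_{l}}\zeta(1+z_{T})^{(-1)^{|T|}}, \]
where $\mathfrak E(\ve z)$ --- the reciprocal of a finite product over $p\mid W$ times an absolutely convergent Euler product over $p\nmid W$ --- satisfies $\mathfrak E(\ve 0)=\bigl(\tfrac{W}{\varphi(W)}\bigr)^{r}\singular_{W}(\ve h)$, the $p\nmid W$ factors at $\ve z=\ve 0$ being $(1-\tfrac1p)^{-r}(1-\tfrac{\nu_{p}(\ve h)}{p})$. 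Since $\psi$ decays faster than any polynomial, only the range $|t_{i}|\lesssim(\log R)^{0.9}$ matters, where $|z_{T}|\to0$; there $\zeta(1+z_{T})=z_{T}^{-1}(1+O(z_{T}))$, so using $\sum_{\emptyset\neq T\subseteq J_{l}}(-1)^{|T|-1}=1$,
\[ \prod_{l=1}^{r}\prod_{\emptyset\neq T\subseteq J_{l}}\zeta(1+z_{T})^{(-1)^{|T|}}=(\log R)^{-r}\prod_{l=1}^{r}\prod_{\emptyset\neq T\subseteq J_{l}}w_{T}^{(-1)^{|T|-1}}\,(1+o(1)), \]
and $\mathfrak E(\ve z)=\bigl(\tfrac{W}{\varphi(W)}\bigr)^{r}\bigl(\singular_{W}(\ve h)+o(E(\ve h))\bigr)$ uniformly --- the $p\nmid W\Delta$ factors converging to their values at $\ve 0$ against a summable majorant, the $p\mid\Delta$ factors having total size $\exp(O(\sum_{p\mid\Delta}1/p))=E(\ve h)$ and moving by only $o(1)$, and $|\singular_{W}(\ve h)|=O_{k}(E(\ve h))$. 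Substituting, and noting that the $\ve t$-integral factors over the classes $J_{l}$ with $l$-th factor $\int_{\R^{m_{l}}}\prod_{i\in J_{l}}\psi(t_{i})\prod_{\emptyset\neq T\subseteq J_{l}}\bigl(\sum_{i\in T}(1+it_{i})\bigr)^{(-1)^{|T|-1}}\,d\ve t=c_{\chi,m_{l}}$ by Definition~\ref{def:sieve-factor}, the prefactor $(\log R)^{k}$ collapses to $(\log R)^{k-r}$ and one arrives at the claimed main term $\bigl(\tfrac{W}{\varphi(W)}\bigr)^{r}\bigl(c_{\chi}(\ve h)\singular_{W}(\ve h)+o(E(\ve h))\bigr)(\log R)^{k-r}$.

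The crux, and where most care is needed, is this last passage from $\mathcal S(\ve z)$ at a general $\ve z$ with $\mathrm{Re}(z_{i})=1/\log R$ to its value at $\ve z=\ve 0$: it must be made quantitatively uniform in $\ve h$ --- both the approximation $\zeta(1+z_{T})\approx z_{T}^{-1}$ and, above all, the stability of the Euler product $\mathfrak E(\ve z)$ over the possibly numerous and possibly large primes dividing $\Delta(\ve h)$ --- with an error that is genuinely $o(1)\cdot E(\ve h)$ rather than merely $o_{\ve h}(1)$. This is exactly why the proposition normalises the error by $E(\ve h)$, and it forces careful bookkeeping of the dependence on $\Delta(\ve h)$ (and, analogously, on $W$, using that $w$ grows slowly). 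By contrast the divisor expansion, the Fubini interchange, the estimation of the tails of the $\ve t$-integral, and the first-order Euler-factor computations are all routine given the rapid decay of $\psi$.
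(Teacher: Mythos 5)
Your proposal is correct and takes essentially the same route as the paper: the paper's own proof simply cites \cite{Tao07} for the case $W=1$ --- whose argument is precisely the Fourier expansion of $\chi$ via $\psi$, the Euler-product factorization of the resulting divisor sums, and the comparison with zeta factors $\zeta(1+z_T)$ that you carry out --- and notes that general $W$ is a straightforward adaptation. Your reconstruction, including pulling out the $(W/\varphi(W))^r$ factor from the $p\mid W$ local factors and flagging the uniformity in $\ve{h}$ (error $o(1)\cdot E(\ve{h})$, controlled via the primes dividing $\Delta(\ve{h})$) as the delicate point, matches the cited argument.
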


\begin{proof}
When $W=1$, this is exactly the main result in \cite{Tao07}. The general case follows from a straightforward adaptation of the argument there.
\end{proof}

In Section \ref{sec:average-singular} we establish some auxiliary results concerning average values of singular series, used to understand averages of $\singular_W(\ve{h})$ and $E(\ve{h})$ as $\ve{h}$ varies. In Section \ref{sec:cor}, we will then prove that the function $\nu_{\chi,R,W,b}$ satisfies the required correlations estimates in Theorem \ref{thm:majorant}.

\section{Average of the singular series}\label{sec:average-singular}

In this section, we prove an auxiliary result on the average of singular series appearing in Proposition \ref{prop:correlation-Lambda}. This is a generalization of a result of Gallagher \cite{Gal76} (see also \cite{For07}).

\begin{proposition}\label{prop:average-singular}
Let $w \geq 1$ be a parameter. For each prime $p$, let $g_p(\ve{h})=g_p(h_1,\cdots,h_t)$ be a function with $g_p\geq 1$ such that the following conditions hold:
\begin{enumerate}
\item   $g_p(\ve{h})=1+O(p^{-1})$ for any $\ve{h}\in\Z^t$;
\item $ g_p(\ve{h})=1+O(p^{-2})$ if $p\nmid \Delta(\ve{h})$, where $\Delta(\ve{h})$ is defined in \eqref{eq:Delta};
\item $g_p(\ve{h})=1$ whenever $p\leq w$.
\end{enumerate}
 Define $g:\Z^t\rightarrow\R$ by the (absolutely convergent) infinite product
\[ g(\ve{h})=\prod_p g_p(\ve{h}). \]
Let $\mathcal{H}\subset\Z^t$ be a (multi)set. Then for any $Q\geq 2$ and $\varepsilon>0$, we have
\[ \E_{\ve{h}\in\mathcal{H}}g(\ve{h}) = 1 +O(w^{-1}) + O\left( \sum_{w<q\leq Q} \frac{\mu^2(q)C^{\omega(q)}}{q} \prob_{\ve{h}\in\mathcal{H}}(q\mid\Delta(\ve{h}))\right) +O_{\varepsilon}\left(Q^{-1}\max_{\ve{h}\in\mathcal{H}} |\Delta(\ve{h})|^{\varepsilon}\right),  \]
for some constant $C=O(1)$, where $\omega(q)$ is the number of prime divisors of $q$, and $\prob_{\ve{h}\in\mathcal{H}}(q\mid\Delta(\ve{h}))$ is the probability that $q\mid \Delta(\ve{h})$ when $\ve{h}$ is chosen uniformly at random from $\mathcal{H}$.
\end{proposition}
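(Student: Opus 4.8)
### Proof proposal for Proposition \ref{prop:average-singular}

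The plan is to expand the infinite product $g(\ve{h}) = \prod_p g_p(\ve{h})$ multiplicatively and interchange the order of summation, so that the average over $\ve{h}$ becomes a sum over squarefree integers $q$ weighted by the probability that $q \mid \Delta(\ve{h})$. First I would write $g_p = 1 + e_p$, where by hypotheses (1)--(3) we have $|e_p| = O(1/p)$ always, $|e_p| = O(1/p^2)$ when $p \nmid \Delta(\ve{h})$, and $e_p = 0$ for $p \leq w$. Expanding the product and truncating at the parameter $Q$, one gets
\[
g(\ve{h}) = \sum_{q \text{ squarefree}} \prod_{p \mid q} e_p(\ve{h}) = \sum_{\substack{q \text{ squarefree} \\ q \leq Q}} \prod_{p \mid q} e_p(\ve{h}) + (\text{tail}),
\]
and the tail should be controlled by the absolute convergence of the product together with the polynomial bound $|\Delta(\ve{h})| \leq |\Delta(\ve{h})|^{O(1)}$ coming from $\mathcal{H} \subset [-r^{O(1)}, r^{O(1)}]$-type constraints (this is where the $O_\varepsilon(Q^{-1} \max |\Delta(\ve{h})|^\varepsilon)$ error term enters). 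The contribution of $q=1$ gives the main term $1$; the contribution of primes $p > w$ with $p \nmid \Delta(\ve{h})$ is $O(\sum_{p>w} p^{-2}) = O(w^{-1})$ after summing over all such $p$ uniformly in $\ve{h}$; and the remaining contribution — from $q > 1$ squarefree with at least one prime factor $p \mid q$ dividing $\Delta(\ve{h})$ — is the term we must track carefully.

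For the remaining piece, I would split each squarefree $q = q_1 q_2$ where $q_1 = \gcd(q, \Delta(\ve{h}))$ (the part of $q$ using hypothesis (1), with crude bound $C/p$ per prime) and $q_2 = q/q_1$ (the part using hypothesis (2), with bound $O(1/p^2)$). Summing $\prod_{p \mid q_2} O(p^{-2})$ freely over all $q_2 > w$-smooth-free integers contributes a convergent factor $1 + O(w^{-1})$, while the $q_1$ part must actually divide $\Delta(\ve{h})$. After averaging over $\ve{h} \in \mathcal{H}$ and using $\prob_{\ve{h}\in\mathcal{H}}(q_1 \mid \Delta(\ve{h}))$, this produces exactly the claimed term $\sum_{w < q \leq Q} \mu^2(q) C^{\omega(q)} q^{-1} \prob_{\ve{h}\in\mathcal{H}}(q \mid \Delta(\ve{h}))$, with $C^{\omega(q)}$ absorbing both the implied constant in hypothesis (1) raised to the number of prime factors and the convergent sum over the $q_2$-part. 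One has to be slightly careful that $q_1$ ranges over integers all of whose prime factors exceed $w$ (by hypothesis (3)), which is why the sum starts at $q > w$; and that the same prime cannot be used twice since $q$ is squarefree.

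The main obstacle I anticipate is making the truncation-at-$Q$ and tail estimate rigorous while keeping the dependence on $\mathcal{H}$ only through the stated quantities. Specifically, one needs that the full (untruncated) product differs from its truncation by something controlled uniformly, which requires bounding $\prod_{p > P} g_p(\ve{h})$ for $\ve{h}$ with $\Delta(\ve{h})$ possibly divisible by many primes up to $|\Delta(\ve{h})|$; the bound $E(\ve{h}) = \exp(O(\sum_{p \mid \Delta} 1/p)) = |\Delta(\ve{h})|^{o(1)}$ is the right tool, and combined with $\sum_{p > P} 1/p^2 \ll 1/P$ one gets a tail of size $O_\varepsilon(Q^{-1} |\Delta(\ve{h})|^\varepsilon)$ after choosing the truncation appropriately. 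A secondary technical point is verifying absolute convergence of $g$ and the rearrangement — but this is immediate from hypotheses (1)--(2) since $\prod_p(1 + O(1/p))$ over $p \mid \Delta$ and $\prod_p (1 + O(1/p^2))$ over the rest both converge. I do not expect the combinatorics of splitting $q = q_1 q_2$ to cause trouble; it is the uniformity of the error terms over the (multi)set $\mathcal{H}$ that needs the most care.
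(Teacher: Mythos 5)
Your proposal follows essentially the same route as the paper: expand $\prod_p g_p$ multiplicatively into a sum over squarefree $q$ (the paper's $a_{\ve{h}}(q)=\prod_{p\mid q}(g_p(\ve{h})-1)$, after first absorbing the primes $p\nmid\Delta(\ve{h})$ into a $1+O(w^{-1})$ factor rather than via your $q=q_1q_2$ split), note that hypothesis (3) kills $1<q\leq w$, keep the $q=1$ main term and the $w<q\leq Q$ probability term, and truncate the tail at $Q$. The only imprecision is in the tail step: the $Q^{-1}\max|\Delta(\ve{h})|^{\varepsilon}$ error comes not from $E(\ve{h})$ or from $\sum_{p>P}p^{-2}\ll P^{-1}$ but from the divisor-count bound $\sum_{q\mid\Delta(\ve{h})}\mu^2(q)C^{\omega(q)}=(1+C)^{\omega(\Delta(\ve{h}))}\ll_{\varepsilon}|\Delta(\ve{h})|^{\varepsilon}$ combined with the trivial gain $1/q\leq 1/Q$, and no box constraint on $\mathcal{H}$ is needed.
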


\begin{proof}
Define a new function $g'$ by the finite product
\[ g'(\ve{h}) = \prod_{p\mid\Delta(\ve{h})} g_p(\ve{h}). \]
Since
\[ g(\ve{h}) = \prod_{p\mid \Delta(\ve{h})} g_p(\ve{h}) \cdot \prod_{p>w} \left( 1+O(p^{-2}) \right) = g'(\ve{h}) (1+O(w^{-1})), \]
it suffices to prove the proposition for $g'$. From now on we thus assume that $g_p(\ve{h})=1$ whenever $p\nmid \Delta(\ve{h})$.

For fixed $\ve{h}\in\Z^t$, define a multiplicative function $a_{\ve{h}}(q)$ supported on squarefree integers $q$ by the formula 
\[ a_{\ve{h}}(q)=\prod_{p\mid q}(g_p(\ve{h})-1). \]
Then $a_{\ve{h}}(q)$ is non-negative and vanishes unless $q\mid\Delta(\ve{h})$. Moreover,
\[ g(\ve{h})=\prod_{p\mid\Delta(\ve{h})} (1+a_{\ve{h}}(p))=\sum_{q\mid\Delta(\ve{h})} a_{\ve{h}}(q). \]
Since $a_{\ve{h}}(p)=O(p^{-1})$ by hypothesis, we have 
\[ a_{\ve{h}}(q)\leq \frac{C^{\omega(q)}}{q}  \]
for some $C = O(1)$, and thus
\[ \sum_{\substack{q\mid\Delta(\ve{h})\\ q>Q}} a_{\ve{h}}(q) \leq \sum_{\substack{q\mid\Delta(\ve{h})\\ q>Q}} \frac{\mu^2(q)C^{\omega(q)}}{q} \leq \frac{1}{Q}\sum_{q\mid\Delta(\ve{h})} \mu^2(q)C^{\omega(q)} \ll \frac{|\Delta(\ve{h})|^{\varepsilon}}{Q}, \]
where the last inequality follows from the identity
\[ \sum_{q\mid\Delta(\ve{h})} \mu^2(q)C^{\omega(q)} = \prod_{p\mid\Delta(\ve{h})} (1+C) = (1+C)^{\omega(\Delta(\ve{h}))}, \]
and the bound $\omega(\Delta(\ve{h})) = o(\log |\Delta(\ve{h})|)$. Hence,
\[ g(\ve{h}) =  \sum_{\substack{q\mid \Delta(\ve{h})\\ q\leq Q}} a_{\ve{h}}(q) + O\left( Q^{-1} |\Delta(\ve{h})|^{\varepsilon} \right).  \]
Average the above equation over $\ve{h}\in\mathcal{H}$. The $q=1$ term contributes $1$ since $a_{\ve{h}}(1)=1$ for any $\ve{h}$. If $1<q\leq w$, then $a_{\ve{n}}(q)=0$ for any $\ve{n}$.
For $w<q\leq Q$, we have
\[ \E_{\ve{h}\in\mathcal{H}} a_{\ve{h}}(q) \leq \frac{C^{\omega(q)}}{q} \prob_{\ve{h}\in\mathcal{H}} (q\mid\Delta(\ve{h})). \]
This completes the proof.
\end{proof}

We will apply this proposition twice, to deal with the main term $\singular_W(\ve{h})$ and to handle the error term $E(\ve{h})$.

\begin{example}
If $g=\singular_W$ (recall Definition \ref{def:singular}), then
\[ g_p(\ve{h})=\left(1-\frac{1}{p}\right)^{-|\{h_1,\cdots,h_t\}|}\left(1-\frac{\nu_p(h_1,\cdots,h_t)}{p}\right) \]
for $p\nmid W$, and $g_p(\ve{h})=1$ for $p\mid W$. It clearly satisfies the assumptions (1) and (3) in the statement of Proposition \ref{prop:average-singular}. 
If $p\nmid \Delta(\ve{h})$, then $|\{h_1,\cdots,h_t\}|=\nu_p(h_1,\cdots,h_t)$, and thus $g_p(\ve{h})=1+O(p^{-2})$, which verifies the assumption (2). 
\end{example}

\begin{example}
If $g=E$ (defined in the statement of Proposition \ref{prop:correlation-Lambda}), then
\[ g_p(\ve{h})=\begin{cases} C_p^{1/p} & p\mid \Delta(\ve{h}) \\ 1 & p\nmid \Delta(\ve{h}), \end{cases} \]
for some constant $C_p = O(1)$. It clearly satisfies all the assumptions in the statement of Proposition \ref{prop:average-singular} with $w=1$.
\end{example}

If the set $\mathcal{H}$ equidistributes in residue classes with modulus up to $Q$, then Proposition \ref{prop:average-singular} implies that the average of $g(\ve{h})$ over $\ve{h}\in\mathcal{H}$ is $O(1)$ for any $w$, and is $1+o(1)$ if $w\rightarrow\infty$. 

\begin{corollary}\label{cor:average-singular}
Let the notations and assumptions be as in Proposition \ref{prop:average-singular}. Suppose that for each squarefree $q\leq Q$, we have $\prob_{\ve{h}\in\CH} (q\mid \Delta(\ve{h})) \leq C^{\omega(q)}q^{-1}$ for some constant $C = O(1)$. 
Then for any $\varepsilon > 0$ we have
\[ \E_{\ve{h}\in\mathcal{H}} g(\ve{h}) = 1+O(w^{-1}\log^{O(1)} (2+w)) + O_{\varepsilon}\left(Q^{-1}\max_{\ve{h}\in\mathcal{H}} |\Delta(\ve{h})|^{\varepsilon}\right). \]
\end{corollary}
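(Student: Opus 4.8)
The plan is to feed the extra hypothesis straight into the conclusion of Proposition~\ref{prop:average-singular} and then carry out an elementary tail estimate for a sum of a nonnegative multiplicative function. Applying Proposition~\ref{prop:average-singular}, the first error term $O(w^{-1})$ and the last error term $O_{\varepsilon}(Q^{-1}\max_{\ve h\in\CH}|\Delta(\ve h)|^{\varepsilon})$ are already in the form demanded by the corollary, so the entire task reduces to showing that the middle term
\[ \sum_{w<q\le Q}\frac{\mu^2(q)C^{\omega(q)}}{q}\,\prob_{\ve h\in\CH}(q\mid\Delta(\ve h)) \]
is $O(w^{-1}\log^{O(1)}(2+w))$.

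Into this sum I substitute the hypothesis $\prob_{\ve h\in\CH}(q\mid\Delta(\ve h))\le C^{\omega(q)}q^{-1}$, which bounds it by $\sum_{w<q\le Q}\mu^2(q)C^{2\omega(q)}q^{-2}$, and then extend the range of summation to all $q>w$ (permissible, since every term is nonnegative). Writing $D=C^2=O(1)$ and $f(q)=\mu^2(q)D^{\omega(q)}$, it remains to bound $\sum_{q>w}f(q)q^{-2}$.

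This is a standard computation. One has $\sum_{q\le x}f(q)\ll_D x(\log(2+x))^{D-1}$ --- for instance by comparison with the Euler product $\prod_{p\le x}(1+D/p)\asymp(\log(2+x))^D$, or by Rankin's trick, or by Shiu's theorem --- and partial summation then gives $\sum_{q>w}f(q)q^{-2}\ll w^{-1}(\log(2+w))^{D-1}$. (Equivalently, Rankin's trick applied directly with exponent $\sigma=1-1/\log(2+w)$ yields $\sum_{q>w}f(q)q^{-2}\le w^{-\sigma}\prod_p(1+Dp^{\sigma-2})\ll w^{-1}(\log(2+w))^{D}$, using $\prod_p(1+Dp^{\sigma-2})\le\exp\big(\sum_p Dp^{\sigma-2}\big)\ll(\log(2+w))^{D}$ and $w^{-\sigma}\ll w^{-1}$.) Since $C$, and hence $D$, is an absolute constant, this is $O(w^{-1}\log^{O(1)}(2+w))$, which also absorbs the separate $O(w^{-1})$ term in Proposition~\ref{prop:average-singular}; the corollary follows.

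There is no real obstacle here: the content of the statement is entirely in Proposition~\ref{prop:average-singular}, and the only point requiring any care is to keep the implied constant $C$ (thus $D=C^2$) absolute, so that the exponent of $\log(2+w)$ remains $O(1)$ rather than growing with the parameters of the problem. That is automatic from the statement of Proposition~\ref{prop:average-singular}, where $C=O(1)$.
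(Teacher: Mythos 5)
Your proposal is correct and follows essentially the same route as the paper: substitute the hypothesis into the middle error term of Proposition~\ref{prop:average-singular} and bound the resulting tail sum $\sum_{q>w}\mu^2(q)(O(1))^{\omega(q)}q^{-2}$ by $w^{-1}\log^{O(1)}(2+w)$, which the paper does via exactly the Rankin's trick computation you give as your parenthetical alternative. Your partial-summation variant is an equally valid way to finish, and your care in tracking $C^{2\omega(q)}$ versus $C^{\omega(q)}$ is immaterial since both constants are absolute.
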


\begin{proof}
In view of Proposition \ref{prop:average-singular}, it suffices to show that
\[ \sum_{q>w} \frac{\mu^2(q)C^{\omega(q)}}{q^2} \ll \frac{\log^{O(1)}(2+w)}{w}. \]
Indeed, by Rankin's trick, this sum is bounded by
\[  \sum_q \left(\frac{q}{w}\right)^{1-1/\log w} \frac{\mu^2(q)C^{\omega(q)}}{q^2} \ll \frac{1}{w} \sum_q \frac{\mu^2(q)C^{\omega(q)}}{q^{1+1/\log w}} \ll \frac{\zeta(1+1/\log w)^{O(1)}}{w} \ll \frac{(\log w)^{O(1)}}{w},  \]
as desired.
\end{proof}

\section{Pseudorandomness of the prime majorant}\label{sec:cor}

In this section we prove Proposition~\ref{thm:majorant}, using the majorant $\nu_{\chi,R,W,b}$ constructed in \eqref{eq:nu} with $R = N^{1/4t_0}$. The lower bound on $\nu_{\chi,R,W,b}$ clearly follows from \eqref{eq:majorize}.

Now fix a system of distinct affine linear forms $\Psi = (\psi_1,\cdots,\psi_t): \Z^d\rightarrow\Z^t$ with $t \leq t_0$. Note that each $\ve{x}\in\Z^d$ induces a partition $\pi(\ve{x})$ of $[t]$, according to the values $\psi_1(\ve{x}),\cdots,\psi_t(\ve{x})$. Precisely, two indices $i,j \in [t]$ lie in the same atom of $\pi(\ve{x})$ if and only if $\psi_i(\ve{x}) = \psi_j(\ve{x})$. For each partition $\pi$ of $[t]$, let $X(\pi)$ be the set of $\ve{x}\in\Z^d$ with $\pi(\ve{x}) = \pi$. It suffices to show that
\[ \frac{1}{|\Omega\cap\Z^d|}\sum_{\ve{x}\in \Omega\cap X(\pi)} \E_{n\in G} \prod_{j=1}^t \nu_{\chi,R,W,b}(n+\psi_j(\ve{x})) = \mathbf{1}_{|\pi|=t} + o(1), \]
for each partition $\pi$ of $[t]$. For the remainder of this section, we fix the partition $\pi$ and write simply $X$ for $X(\pi)$. Let $s = |\pi|$. We may assume that $\Omega\cap X$ is nonempty. The implied constants in this section are always allowed to depend on $d,t,\Psi,\pi,X$.

From the definition \eqref{eq:nu} of $\nu_{\chi,R,W,b}$, we need to show that
\[ \frac{|\Omega\cap X|}{|\Omega\cap\Z^d|} \left( \frac{\varphi(W)}{W\log R} \right)^t \E_{\ve{x}\in \Omega\cap X} \E_{n\in G} \prod_{j=1}^t \Lambda_{\chi,R}(W(n+\psi_j(\ve{x}))+b)^2 = \mathbf{1}_{s=t} + o(1). \]
By Proposition \ref{prop:correlation-Lambda}, the inner average over $n$ above is
\[ \left( \frac{W}{\varphi(W)} \right)^s (\log R)^{2t-s} \left[ c_{\chi}(\Psi(\ve{x}),\Psi(\ve{x})) \singular_W(\Psi(\ve{x})) + o(E(\Psi(\ve{x}))) \right] + O(N^{-1}R^k(\log R)^k). \]
The last error term above is negligible by the choice of $R$. Thus we need to show that
\begin{equation}\label{eq:average-singular} 
\frac{|\Omega\cap X|}{|\Omega\cap\Z^d|} \left( \frac{\varphi(W)\log R}{W} \right)^{t-s} \E_{\ve{x}\in \Omega\cap X} \left[ c_{\chi}(\Psi(\ve{x}),\Psi(\ve{x})) \singular_W(\Psi(\ve{x})) + o(E(\Psi(\ve{x}))) \right] = \mathbf{1}_{s=t} + o(1). 
\end{equation}
It is convenient to introduce the linear variety $V \subset \R^d$ consisting of those 
vectors $\ve{x}\in\R^d$ whose induced partitions $\pi(\ve{x})$ are the same as or coarser than $\pi$, and let $\widetilde{X} = V \cap \Z^d$. In other words, $\widetilde{X}$ is the set of $\ve{x}\in\Z^d$ satisfying $\psi_i(\ve{x})=\psi_j(\ve{x})$ whenever $i,j$ lie in the same atom of $\pi$. Note that $X\subset\widetilde{X}$ always, and $\widetilde{X}=\Z^d$ when $s=t$.

\begin{lemma}\label{lem:sub-lattice}
Let $L_1\subset L_2\subset\R^d$ be two lattices (not necessarily of full rank). Let $\Omega\subset\R^d$ be a convex body with inradius $r(\Omega) \geq 2$. Then
\[ |\Omega\cap L_1| \ll_{d,L_1,L_2} r(\Omega)^{\text{dim}(L_1)-\text{dim}(L_2)} |\Omega\cap L_2|. \]
\end{lemma}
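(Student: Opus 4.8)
The plan is to reduce to a clean statement about lattices in general position by choosing convenient coordinates, and then estimate the number of lattice points in a convex body by comparison with its volume. First I would set $d_1 = \dim(L_1)$ and $d_2 = \dim(L_2)$; the inequality $L_1 \subset L_2$ forces $d_1 \leq d_2$, and if $d_1 = d_2$ the claim is trivial (the two point counts are comparable up to a constant depending on the index $[L_2:L_1]$), so I would assume $d_1 < d_2$. After a linear change of coordinates (absorbed into the implied constants, which are allowed to depend on $L_1$ and $L_2$), I may assume $L_2 = \Z^{d_2} \times \{0\}^{d-d_2} \subset \R^d$ and that $L_1$ is a finite-index sublattice of a rank-$d_1$ primitive sublattice of $L_2$; in particular $L_1$ spans a $d_1$-dimensional coordinate-type subspace $W_1 \subset W_2 = \R^{d_2}\times\{0\}^{d-d_2}$.

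The key step is then the standard estimate: for a convex body $K$ in a Euclidean space of dimension $m$ containing a full-rank lattice $L$ with covolume bounded in terms of $L$, one has $|K \cap L| \ll_{m,L} 1 + \vol_m(K)$, and when the inradius of $K$ (within that subspace) is at least a constant, the $1$ is absorbed so $|K\cap L| \ll_{m,L} \vol_m(K)$. I would apply this inside $W_2$: the projection (or slice) of $\Omega$ to $W_2$ is a convex body whose inradius is at least $r(\Omega) \geq 2$, so $|\Omega \cap L_2| \ll \vol_{d_2}(\Omega \cap W_2)$, and conversely $|\Omega \cap L_2| \gg \vol_{d_2}(\Omega\cap W_2)$ by the inradius lower bound. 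For $L_1$, note $\Omega \cap L_1$ lies in $\Omega \cap W_1$, which is a convex body of dimension $d_1$ whose diameter is at most the diameter of $\Omega$; since $\Omega \subset [-r(\Omega)^{O(1)}, r(\Omega)^{O(1)}]^d$ — wait, that hypothesis is not assumed here, so instead I use that $W_1$ is a coordinate subspace and $\Omega \cap W_1$ has inradius at most, and diameter comparable to, the relevant widths of $\Omega$. The cleanest route: bound $|\Omega\cap L_1| \ll_{L_1} \vol_{d_1}(\Omega\cap W_1) + 1 \ll \vol_{d_1}(\Omega \cap W_1)$ using $r(\Omega)\geq 2$, and then invoke the elementary geometric inequality relating slices: $\vol_{d_1}(\Omega\cap W_1) \ll \vol_{d_2}(\Omega\cap W_2) \cdot w^{-(d_2 - d_1)}$, where $w$ is the minimal width of $\Omega$ in the directions of $W_2$ orthogonal to $W_1$, and $w \geq 2 r(\Omega)$ — wait, $w \geq 2r(\Omega)$ is false; rather every width of $\Omega$ is at least $2r(\Omega)$ by definition of inradius, so $w \geq 2 r(\Omega) \geq r(\Omega)$. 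Chaining these gives $|\Omega\cap L_1| \ll \vol_{d_2}(\Omega \cap W_2)\, r(\Omega)^{-(d_2-d_1)} \ll |\Omega\cap L_2|\, r(\Omega)^{d_1 - d_2}$, which is the claim.

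The main obstacle is the geometric slicing inequality $\vol_{d_1}(\Omega\cap W_1)\, r(\Omega)^{d_2-d_1} \ll \vol_{d_2}(\Omega\cap W_2)$: one wants that shrinking a convex body to a lower-dimensional coordinate slice costs a factor of at least the inradius per lost dimension. This follows by writing $\Omega \cap W_2$ as a union over $y$ in the orthogonal complement $W_1^\perp \cap W_2$ of the slices $(\Omega\cap W_2)_y$, using Fubini, and observing that $\Omega$ contains a ball of radius $r(\Omega)$, hence for $y$ ranging over a $(d_2-d_1)$-ball of radius $r(\Omega)$ around the center the slice $(\Omega\cap W_2)_y$ has $\vol_{d_1}$ at least a constant multiple of $\vol_{d_1}$ of the central slice — and the central slice has volume $\gg \vol_{d_1}(\Omega \cap W_1)$ only after translating $\Omega$ so its incenter lies on $W_1$, which is harmless since translation changes neither point counts by more than $O_{L}(1)$ factors nor volumes. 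I would isolate this as the one genuine computation and present the rest as bookkeeping, citing a standard reference (e.g. the lattice-point counting lemma in \cite{GT10} or a convexity text) for $|K\cap L| \asymp_{m,L}\vol_m(K)$ when $r(K)\geq 2$.
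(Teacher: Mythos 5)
Your reduction and the final Fubini computation are fine in spirit, but the two pillars the argument rests on are not available under the stated hypotheses, and this is a genuine gap rather than bookkeeping. First, the count--volume comparisons $|\Omega\cap L_1|\ll \vol_{d_1}(\Omega\cap W_1)$ and $|\Omega\cap L_2|\gg \vol_{d_2}(\Omega\cap W_2)$ both require the \emph{slices} $\Omega\cap W_1$, $\Omega\cap W_2$ to have inradius bounded below inside their own spans. The hypothesis $r(\Omega)\geq 2$ concerns $\Omega$ as a body in $\R^d$ and says nothing about these slices: the inball of $\Omega$ may lie far from $W_2$, so $\Omega\cap W_2$ can be an arbitrarily thin sliver (or empty), in which case a sliver can contain far more lattice points than its volume (many collinear points in a set of tiny measure) and, in the other direction, a set of large volume squeezed between lattice hyperplanes can contain almost no lattice points. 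You acknowledge needing the inradius of the slice ``within that subspace'' but never establish it, and it is false in general. Second, the step ``translate $\Omega$ so its incenter lies on $W_1$, which is harmless'' is not harmless: the required translation is transverse to $W_1$ and can be arbitrarily large, and such a translation changes both the slices $\Omega\cap W_i$ and the counts $|\Omega\cap L_i|$ completely (it can move the body off the span of $L_2$ altogether), not by $O(1)$ factors. So the slicing inequality is only proved for bodies whose inball is already centered on $W_1$, which is exactly the favorable configuration; the problematic configuration, where the span of $L_2$ cuts $\Omega$ thinly near its boundary, is silently excluded.

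The paper's proof avoids volumes entirely and only ever uses the inradius of $\Omega$ itself: after a linear change of variables one takes $L_2$ to be the standard lattice in its span and restricts to that subspace, so that the desired ratio $|\Omega\cap L_1|/|\Omega\cap L_2|$ becomes the average of $1_{L_1}$ over $\Omega\cap\Z^{\dim L_2}$; the covering inequality \eqref{eq:cover} from \cite{TZ08} bounds this average by the supremum of the corresponding average over boxes of side length $\sim r(\Omega)$, and in any such box the density of points of $L_1$ among integer points is $O(r(\Omega)^{-\codim L_1})$, since a point of $L_1$ is determined by $\dim(L_1)$ of its coordinates, each with $O(r(\Omega))$ choices. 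If you want to salvage a volume-based argument you would need to first prove the slices have inradius $\gg r(\Omega)$ (false in general) or replace the global volume comparison by a covering/averaging step of this kind.
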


\begin{proof}
Via a linear transformation (depending only on $d$ and $L_2$), we may assume that $L_2$ is the standard lattice $\Z^{\text{dim}(L_2)}\subset\R^{\text{dim}(L_2)}$ naturally embedded in $\R^d$. By restricting to $\R^{\text{dim}(L_2)}$ we may assume that $L_2=\Z^d$. With these assumptions we may use the following covering inequality in convex geometry (see \cite[Lemma C.4]{TZ08}):
\begin{equation}\label{eq:cover} 
\E_{\ve{x}\in \Omega\cap\Z^d} f(\ve{x}) \ll \sup_{y\in\R^d} \E_{\ve{x}\in (y+[-r(\Omega),r(\Omega)])\cap\Z^d} f(\ve{x}), 
\end{equation}
applied to the function $f=1_{L_1}$.
It thus suffices to show that the probability that a random point $\ve{x}$ in a $d$-dimensional box of side lengths $2r(\Omega)$ lies in $L_1$ is $O(r(\Omega)^{-\text{codim}(L_1)})$. This is clear, since any point $\ve{x}\in L_1$ is determined by $\text{dim}(L_1)$ of its coordinates, and there are $O(r(\Omega))$ ways to choose each of these coordinates.
\end{proof}

\begin{lemma}\label{lem:X-tildeX}
Let $X$ and $\widetilde{X}$ be as above. Let $\Omega\subset\R^d$ be a convex body with inradius $r(\Omega) \geq 2$. Then
\[ |\Omega\cap X| = (1+O(r(\Omega)^{-1})) |\Omega\cap\widetilde{X}|. \]
\end{lemma}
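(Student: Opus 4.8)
The idea is to show that $X$ occupies all of $\widetilde X$ except for a negligible "boundary" consisting of points that satisfy an extra linear relation not forced by $\pi$. Recall that $\widetilde X = V \cap \Z^d$, where $V$ is the linear variety cut out by the equations $\psi_i(\ve x) = \psi_j(\ve x)$ for $i,j$ in a common atom of $\pi$, while $X = X(\pi)$ is the subset of $\widetilde X$ on which $\psi_i(\ve x) \neq \psi_j(\ve x)$ for every pair $i,j$ lying in \emph{distinct} atoms. Thus $\widetilde X \setminus X$ is the union, over all pairs $\{i,j\}$ in different atoms of $\pi$, of the sets $\{\ve x \in \widetilde X : \psi_i(\ve x) = \psi_j(\ve x)\}$. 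Since $\Psi$ consists of distinct linear forms and $i,j$ lie in different atoms of $\pi$, the equation $\psi_i = \psi_j$ is \emph{not} identically satisfied on $V$ (that is precisely what it means for $i,j$ to be in different atoms rather than forced together), so each such set is contained in a proper affine subspace $W_{ij} \subsetneq V$, i.e. an affine sublattice of $\widetilde X$ of dimension strictly less than $\dim V = \dim \widetilde X$.

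With this decomposition in hand, the plan is:
\begin{enumerate}
\item Write $|\Omega \cap X| = |\Omega \cap \widetilde X| - |\Omega \cap (\widetilde X \setminus X)|$ and bound $|\Omega \cap (\widetilde X \setminus X)| \leq \sum_{\{i,j\}} |\Omega \cap (W_{ij} \cap \Z^d)|$, a sum over the finitely many pairs in distinct atoms.
\item For each such pair, apply Lemma~\ref{lem:sub-lattice} with $L_1 = W_{ij} \cap \Z^d$ and $L_2 = \widetilde X$ (a sublattice–lattice pair with $\dim L_1 \le \dim L_2 - 1$), which gives $|\Omega \cap (W_{ij} \cap \Z^d)| \ll r(\Omega)^{\dim L_1 - \dim L_2} |\Omega \cap \widetilde X| \ll r(\Omega)^{-1} |\Omega \cap \widetilde X|$.
\item Sum over the $O_{d,t,\Psi,\pi}(1)$ pairs to conclude $|\Omega \cap (\widetilde X \setminus X)| \ll r(\Omega)^{-1} |\Omega \cap \widetilde X|$, hence $|\Omega \cap X| = (1 + O(r(\Omega)^{-1}))|\Omega \cap \widetilde X|$.
\end{enumerate}
One should be mildly careful that Lemma~\ref{lem:sub-lattice} is stated for lattices, and $W_{ij}$ is an affine subspace rather than a linear one; this is harmless since $W_{ij} \cap \Z^d$ (if nonempty) is a coset of a genuine sublattice, and the covering inequality~\eqref{eq:cover} together with the counting argument in the proof of Lemma~\ref{lem:sub-lattice} is insensitive to translation — one can simply translate $\Omega$ so that the relevant coset passes through the origin, or note that the boundedness hypothesis $\Omega \subset [-r(\Omega)^{O(1)}, r(\Omega)^{O(1)}]^d$ ensures the affine version still holds. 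If $W_{ij} \cap \Z^d = \emptyset$ the contribution is zero and there is nothing to do.

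The only genuine content is the observation in the second paragraph: that for $i,j$ in distinct atoms of $\pi$, the locus $\{\psi_i = \psi_j\}$ meets $V$ in a proper subspace. This follows directly from the definitions — $\pi(\ve x)$ is defined by exactly which $\psi$'s agree at $\ve x$, so the "generic" point of $V$ (which exists by construction, as $V$ was defined to be the variety whose generic induced partition is $\pi$) has $\psi_i(\ve x) \ne \psi_j(\ve x)$, forcing $\{\psi_i = \psi_j\} \cap V$ to be a proper closed subvariety, necessarily an affine subspace of dimension $\le \dim V - 1$. I expect this to be the step requiring the most care to phrase correctly, but it is not difficult; everything else is a direct invocation of Lemma~\ref{lem:sub-lattice} and a finite union bound. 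There is no serious obstacle.
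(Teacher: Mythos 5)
Your overall strategy is exactly the paper's: write $\widetilde{X}\setminus X$ as a finite union of lattice points on proper (affine) subvarieties of $V$, bound each piece via Lemma~\ref{lem:sub-lattice} applied to suitable translates, and conclude by a union bound, noting the codimension is at least $1$ so each piece contributes $O(r(\Omega)^{-1})|\Omega\cap\widetilde{X}|$.

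The one place your justification goes wrong is the claim that, for $i,j$ in distinct atoms of $\pi$, the equation $\psi_i=\psi_j$ is not identically satisfied on $V$ ``because that is precisely what it means for $i,j$ to be in different atoms,'' and that $V$ ``was defined to be the variety whose generic induced partition is $\pi$.'' Neither is true by definition: $\pi$ is an arbitrary partition of $[t]$, and $V$ is cut out by the equations $\psi_i=\psi_j$ for $i,j$ in a common atom; these equations may well force additional coincidences, so the generic partition on $V$ can be strictly coarser than $\pi$, in which case $X(\pi)=\emptyset$ and both your properness claim and the lemma's conclusion fail (the left side is $0$ while $|\Omega\cap\widetilde{X}|$ need not be small). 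The correct and necessary input is the standing assumption, made just before the lemma in the paper, that $\Omega\cap X$ is nonempty: a point of $V$ at which no cross-atom coincidence holds witnesses that each subvariety $\{\psi_i=\psi_j\}\cap V$ is proper, hence of codimension at least $1$ in $V$. This is exactly how the paper argues (``Since $X$ is non-empty, these subvarieties have codimension at least $1$ in $V$''), so the fix is one line, but as written your properness step rests on a false ``by definition'' claim rather than on the nonemptiness hypothesis. The remainder of your argument, including the remark that Lemma~\ref{lem:sub-lattice} applies to translates of the sublattices, matches the paper's proof.
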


\begin{proof}
Note that $X$ is obtained from $\widetilde{X}$ by removing a few linear subvarieties $V_1,V_2, \cdots$ from $V$. Since $X$ is non-empty, these subvarieties have codimension at least $1$ in $V$. By Lemma~\ref{lem:sub-lattice} applied to (suitable translates of) $V_i\cap\Z^d$ and $V \cap \Z^d$, we obtain
\[ |\Omega \cap V_i\cap \Z^d| \ll r(\Omega)^{-1} |\Omega \cap V \cap \Z^d| \] 
for each $i$. This gives the desired conclusion.
\end{proof}

\begin{lemma}\label{lem:equi-lattice}
Let $L\subset\R^d$ be a lattice (not necessarily of full rank). Let $\Omega\subset\R^d$ be a convex body with inradius $r(\Omega) \geq 2$. For any positive integer $q$ and any function $f:L\rightarrow\R$ satisfying $f(\ve{x}+\ve{m})=f(\ve{x})$ for any $\ve{x}\in L$ and $\ve{m}\in q L$, we have
\[ \E_{\ve{x}\in \Omega\cap L} f(\ve{x}) =  \left( 1+ O_{d,L} \left( \frac{q}{r(\Omega)} \right) \right) \E_{\ve{x}\in L/qL} f(\ve{x}). \]
\end{lemma}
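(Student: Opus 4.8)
The plan is to reduce the equidistribution statement over the lattice $L$ to a counting statement inside a box, exactly as in the proof of Lemma \ref{lem:sub-lattice}, and then use the fact that $f$ is $qL$-periodic to replace sums over $\Omega \cap L$ by sums over the finite group $L/qL$ with a controlled error. First I would apply a linear transformation depending only on $d$ and $L$ to assume that $L$ is the standard lattice $\Z^m \subset \R^m$ naturally embedded in $\R^d$, where $m = \dim(L)$; restricting to the span of $L$, we may further assume $m = d$ and $L = \Z^d$, so that $f : \Z^d \to \R$ is periodic with respect to $q\Z^d$. The claim becomes
\[ \E_{\ve{x}\in \Omega\cap\Z^d} f(\ve{x}) = \left( 1 + O_d\left( \tfrac{q}{r(\Omega)} \right) \right) \E_{\ve{x}\in \Z^d/q\Z^d} f(\ve{x}). \]

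The main step is to partition $\Z^d$ into residue classes modulo $q$: write $\Z^d = \bigsqcup_{\ve{a}\in \Z^d/q\Z^d} (\ve{a} + q\Z^d)$, and note that on each class $f$ is constant equal to $f(\ve{a})$. Thus $\sum_{\ve{x}\in \Omega \cap \Z^d} f(\ve{x}) = \sum_{\ve{a}} f(\ve{a}) \cdot |\Omega \cap (\ve{a} + q\Z^d)|$. The key input is that for a convex body $\Omega$ with inradius $r(\Omega)$, each translated sublattice $\ve{a} + q\Z^d$ meets $\Omega$ in
\[ |\Omega \cap (\ve{a} + q\Z^d)| = \frac{\vol(\Omega)}{q^d}\left( 1 + O_d\left( \tfrac{q}{r(\Omega)} \right) \right) \]
points, uniformly in $\ve{a}$; equivalently, $\frac{|\Omega\cap(\ve a+q\Z^d)|}{|\Omega\cap\Z^d|} = q^{-d}(1+O_d(q/r(\Omega)))$. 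This is a standard lattice-point count: scaling by $1/q$ turns $\ve{a}+q\Z^d$ into a translate of $\Z^d$ and $\Omega$ into a convex body of inradius $r(\Omega)/q$, and the number of integer points in a convex body differs from its volume by $O$ of its surface area, which under the normalization $\Omega \subset [-r(\Omega)^{O(1)}, r(\Omega)^{O(1)}]^d$ (or simply by the inradius lower bound and convexity) is $O_d(q/r(\Omega))$ times the main term. Alternatively one can invoke the covering inequality \eqref{eq:cover} used in Lemma \ref{lem:sub-lattice} to reduce to the case where $\Omega$ is a box of side $2r(\Omega)$, where the count is immediate by one-dimensional division with remainder in each coordinate. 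Summing $f(\ve{a})$ against these nearly-equal weights and dividing by $|\Omega \cap \Z^d| = \sum_{\ve{a}} |\Omega \cap (\ve{a}+q\Z^d)|$ yields the claim, since the relative error $O_d(q/r(\Omega))$ is uniform in $\ve{a}$ and $f \geq 0$ is not needed — one just bounds $|\sum_{\ve a} f(\ve a)(w_{\ve a} - \bar w)| \le \max_{\ve a}|w_{\ve a}-\bar w|\cdot \sum_{\ve a}|f(\ve a)|$ after noting all weights are within a $(1+O_d(q/r(\Omega)))$ factor of their average.

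The only mild obstacle is bookkeeping the dependence on $L$ correctly: the linear transformation trivializing $L$ distorts $\Omega$, but since it depends only on $d$ and $L$ it changes the inradius by a bounded factor, so $r(\Omega) \gg_{d,L} r(\Omega)$ after the change of coordinates and the error term $q/r(\Omega)$ is preserved up to the implied constant. I expect the argument to be a few lines once the lattice-point count in a convex body is cited; no genuinely new idea beyond what already appears in Lemmas \ref{lem:sub-lattice} and \ref{lem:X-tildeX} is required.
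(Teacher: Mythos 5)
Your argument is correct and follows the same first step as the paper (a linear change of variables reducing to $L=\Z^d$, with the inradius distorted only by an $O_{d,L}(1)$ factor), but it diverges at the main point: the paper simply cites \cite[Corollary C.3]{TZ08} for the resulting statement about $q\Z^d$-periodic functions, whereas you prove it directly by decomposing $\Z^d$ into residue classes modulo $q$ and counting points of each translate $\ve{a}+q\Z^d$ in $\Omega$ via the standard volume-versus-boundary estimate for convex bodies (rescaling by $1/q$ and using that a convex body of inradius $\rho$ has $\vol(1+O_d(1/\rho))$ lattice points, e.g.\ by sandwiching between inner and outer parallel bodies). This makes your write-up self-contained where the paper outsources the key counting input, at the cost of a few lines; the uniformity in $\ve{a}$ that you emphasize is exactly what is needed. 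Two small caveats: your ``alternative'' route through the covering inequality \eqref{eq:cover} does not work as stated, since that inequality is a one-sided upper bound (useful in Lemma \ref{lem:sub-lattice}) and cannot by itself yield the two-sided equidistribution asserted here, so you should rely on the direct count; and your final error is naturally of size $O_{d,L}(q/r(\Omega))\cdot\E_{\ve{x}\in L/qL}|f(\ve{x})|$, which matches the stated multiplicative form only for nonnegative $f$ --- a harmless point, since the lemma is applied to indicator functions (and the paper's own formulation is no more careful on this, nor on the case where $\Omega$ meets the affine span of a non-full-rank $L$ in a thin slice, which both you and the paper gloss over in the same way).
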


\begin{proof}
By a linear change of variables, we may assume that $L\subset\R^d$ is the lattice spanned by the standard basis vectors $e_1,\cdots,e_{\dim L}$. After projecting to the first $\dim L$ coordinates, we may assume that $d=\dim L$ and $L=\Z^d$. The assertion then becomes \cite[Corollary C.3]{TZ08}.
\end{proof}

\begin{corollary}[Equidistribution in residue classes]\label{cor:equi}
Let the notations be as above. Let $\mathcal{H}=\{\Psi(\ve{x}): \ve{x}\in \Omega\cap X\}$. Then for any squarefree $q \leq r(\Omega)$, we have $\prob_{\ve{h}\in\mathcal{H}} (q\mid\Delta(\ve{h})) \leq C^{\omega(q)}q^{-1}$ for some constant $C=C(\Psi,X)>0$. 
\end{corollary}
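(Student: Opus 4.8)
The plan is to unfold the definitions and reduce the statement to an equidistribution estimate for affine-linear forms on a lattice, which Lemma~\ref{lem:equi-lattice} supplies; throughout, implied constants may depend on $\Psi$ and $X$ as usual. Since $\mathcal H = \{\Psi(\ve x): \ve x\in\Omega\cap X\}$ as a multiset, $\prob_{\ve h\in\mathcal H}(q\mid\Delta(\ve h)) = \prob_{\ve x\in\Omega\cap X}(q\mid\Delta(\Psi(\ve x)))$. For $\ve x\in X = X(\pi)$ one has $\psi_i(\ve x)=\psi_j(\ve x)$ exactly when $i,j$ lie in the same atom of $\pi$, so
\[ \Delta(\Psi(\ve x)) = \prod_e \phi_e(\ve x), \qquad \phi_e := \psi_i - \psi_j, \]
the product running over the (at most $\binom{t}{2}$) pairs $e=(i,j)$ with $i<j$ lying in distinct atoms of $\pi$. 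Each $\phi_e$ is an affine-linear form not vanishing identically on $V$: otherwise $\psi_i$ and $\psi_j$ would agree on all of $X$, contradicting the definition of $X$, which is nonempty by our standing assumption. Hence $\phi_e|_V$ is either non-constant or a nonzero constant. Fix an integer point $\ve x_1\in\widetilde X = V\cap\Z^d$ (possible since $X\neq\emptyset$), write $\widetilde X = \ve x_1 + L_0$ for the lattice $L_0 = \{\ve y\in\Z^d : \ve x_1+\ve y\in V\}$, and let $\ell_e$ be the linear part of $\phi_e$, regarded as a linear functional on $L_0$; it is identically zero exactly when $\phi_e|_V$ is constant.

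Next I would establish the following key estimate: for any set $E_0$ of such pairs and pairwise coprime squarefree integers $(q_e)_{e\in E_0}$ with $Q_0 := \prod_{e\in E_0} q_e \leq r(\Omega)$,
\[ \prob_{\ve x\in\Omega\cap\widetilde X}\bigl( q_e \mid \phi_e(\ve x) \text{ for all } e\in E_0 \bigr) \ll \frac{1}{Q_0}. \]
Translating by $\ve x_1$ turns this event into a $Q_0 L_0$-periodic condition on $\ve y\in L_0$; since the inradius is translation-invariant and $Q_0\leq r(\Omega)$, Lemma~\ref{lem:equi-lattice} with modulus $Q_0$ bounds the left side by $O(1)$ times $\prob_{\ve y\in L_0/Q_0L_0}(q_e\mid\phi_e(\ve x_1+\ve y)\text{ for all }e)$. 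By the Chinese Remainder Theorem this probability factors over the primes $p\mid Q_0$: each such $p$ divides a unique $q_{e(p)}$, and the $p$-factor is the density of $\ve y\in L_0/pL_0$ satisfying the single congruence $\ell_{e(p)}(\ve y)\equiv -\phi_{e(p)}(\ve x_1)\pmod p$. This density equals $1/p$ whenever $\ell_{e(p)}$ is nonzero modulo $p$, and is $\leq 1$ otherwise; but the latter case can only occur when $p$ divides a fixed positive integer determined by $\Psi$ and $\pi$ (a common multiple of the contents of the $\ell_e$, together with the nonzero constant values taken by those $\phi_e$ that are constant on $V$). Thus only $O(1)$ primes contribute a factor $\leq 1$ and the remaining ones each contribute $1/p$, so the product is $\ll 1/Q_0$.

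Finally I would assemble the pieces. Writing $q = p_1\cdots p_\ell$ with $\ell = \omega(q)$, the divisibility $q\mid\Delta(\Psi(\ve x))$ forces, for each $m$, some pair $e$ with $p_m\mid\phi_e(\ve x)$; union-bounding over the at most $\binom{t}{2}^{\ell}$ assignments $m\mapsto e$ and collecting, for each value $e$, the product $q_e$ of the primes assigned to it, every resulting term fits the key estimate with $\prod_e q_e = q\leq r(\Omega)$. Passing from $X$ to $\widetilde X$ via Lemma~\ref{lem:X-tildeX} costs only a factor $1+O(r(\Omega)^{-1}) = O(1)$, so each term is $\ll 1/q$; summing over the $\leq\binom{t}{2}^{\omega(q)}$ assignments gives $\prob_{\ve h\in\mathcal H}(q\mid\Delta(\ve h))\ll\binom{t}{2}^{\omega(q)}/q \leq C^{\omega(q)}/q$ for a suitable $C=C(\Psi,X)>0$. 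The one delicate point is the middle step: one must confine the failure of $\ell_e$ to stay surjective modulo $p$ to the finitely many primes dividing a fixed modulus depending only on $\Psi$ and $\pi$, so that these ``bad'' primes cost only an $O(1)$ factor instead of spoiling the $1/Q_0$ saving; everything else is routine bookkeeping with the Chinese Remainder Theorem and Lemmas~\ref{lem:X-tildeX} and~\ref{lem:equi-lattice}.
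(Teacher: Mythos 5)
Your proof is correct and rests on the same two pillars as the paper's: Lemma~\ref{lem:X-tildeX} to pass from $X$ to the lattice $\widetilde{X}=V\cap\Z^d$ at the cost of a factor $1+O(r(\Omega)^{-1})$, and Lemma~\ref{lem:equi-lattice} to replace the count over $\Omega$ by a density modulo $q$, which is then handled prime by prime. The difference is only in the bookkeeping: the paper bounds the number of bad residue classes modulo $q$ (for prime $q$ a union of at most $s^2$ hyperplanes cut out by $\psi_i\equiv\psi_j$, taking $q$ sufficiently large and using multiplicativity for squarefree $q$), whereas you union-bound over assignments of the prime factors of $q$ to the difference forms $\psi_i-\psi_j$ and compute the resulting CRT densities directly on the lattice $L_0$, with the degenerate primes --- those modulo which a difference form becomes constant on $V$ --- explicitly confined to divisors of a fixed nonzero integer depending only on $\Psi$ and $\pi$. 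This buys a slightly more careful treatment of the small-prime and lower-rank issues that the paper dispatches with ``$q$ sufficiently large depending on $\widetilde{X}$'', but the underlying argument is the same.
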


\begin{proof}
We may assume that $q$ is sufficiently large. For each $\ve{r}\in (\Z/q\Z)^d$, let $\widetilde{X}(q,\ve{r})\subset\widetilde{X}$ be the sublattice consisting of those $\ve{x}\in\widetilde{X}$ with $\ve{x}\equiv\ve{r}\pmod q$. 
Let $\ve{R}_q$ be the set of $\ve{r}\in (\Z/q\Z)^d$ satisfying 
\[ \prod_{i,j} (\psi_i(\ve{r}) - \psi_j(\ve{r})) \equiv 0\pmod q, \]
where the product is taken over all pairs $(i,j)$ such that $i,j$ lie in different atoms of the partition $\pi$. Thus $q \mid \Delta(\ve{h})$ if and only if $\ve{h} = \Psi(\ve{x})$ for some $\ve{x} \in \Omega 
\cap X$ with $\ve{x} \pmod q \in \ve{R}_q$. It suffices to show that
\[ \sum_{\ve{r}\in\ve{R}_q} |\Omega\cap\widetilde{X}(q,\ve{r})| \leq \frac{C^{\omega(q)}}{q} |\Omega\cap X|.  \]
By Lemma \ref{lem:equi-lattice} applied to (a suitable translate of) $\widetilde{X}$ and the function $f(\ve{x}) = 1_{\ve{x} \equiv \ve{r}\pmod q}$, we have
\[ \frac{|\Omega\cap \widetilde{X}(q,\ve{r})|}{|\Omega\cap\widetilde{X}|} \ll \prob_{\ve{x} \in \widetilde{X}} (\ve{x} \equiv \ve{r}\pmod q) \ll q^{-d}, \]
where the second inequality holds since $q$ is sufficiently large depending on $\widetilde{X}$. Combining this with Lemma \ref{lem:X-tildeX} we obtain
\[ |\Omega\cap \widetilde{X}(q,\ve{r})| = q^{-d} |\Omega\cap X|. \]
It thus suffices to show that $|\ve{R}_q| \leq C^{\omega(q)} q^{d-1}$. When $q$ is prime, $\ve{R}_q$ is the union of at most $s^2$ hyperplanes in $(\Z/q\Z)^d$ cut out by equations of the form $\psi_i\equiv\psi_j\pmod q$. The desired bound $|\ve{R}_q| \ll q^{d-1}$ follows in this case, since each such hyperplane contains $O(q^{d-1})$ points (recall that the implied constants here are allowed to depend on $\Psi$). For general squarefree $q$, the conclusion follows by multiplicativity.
\end{proof}

With these lemmas in hand, we may now prove \eqref{eq:average-singular} and thus complete the proof of Proposition~\ref{thm:majorant}. Since $\Omega \subset [-r(\Omega)^{O(1)}, r(\Omega)^{O(1)}]^d$, we have
\[ \max_{\ve{x} \in \Omega \cap \Z^d} |\Delta(\Psi(\ve{x}))| \ll r(\Omega)^{O(1)}. \]
In view of Corollary \ref{cor:equi}, we may apply Corollary \ref{cor:average-singular} with $Q=r(\Omega)^{0.1}$ (say) to obtain
\[ \E_{\ve{x}\in \Omega\cap X} \singular_W(\Psi(\ve{x})) = 1+o(1), \]
and
 \[ \E_{\ve{x}\in \Omega\cap X} E(\Psi(\ve{x})) = O(1). \]
To prove \eqref{eq:average-singular}, we divide into two cases according to whether $s=t$ or $s<t$. If $s=t$, then 
\[ |\Omega\cap X| = (1+O(r(\Omega)^{-1})) |\Omega\cap\Z^d| \]
by Lemma~\ref{lem:X-tildeX}. Since the values of $\psi_i(\ve{x})$ are all distinct for $\ve{x}\in X$ in this case, the sieve factor $c_{\chi}(\Psi(\ve{x}), \Psi(\ve{x}))$ is the product of copies of $c_{\chi}(2)$, and hence equal to $1$. Thus the left side of \eqref{eq:average-singular} is
\[ (1+O(r(\Omega)^{-1})) \left[ \E_{\ve{x}\in\Omega\cap X} \singular_W(\Psi(\ve{x})) + o \left( \E_{\ve{x}\in \Omega\cap X} E(\Psi(\ve{x})) \right) \right] = 1+o(1), \]
as desired. In the case when $s<t$, by Lemma \ref{lem:sub-lattice} the left side of \eqref{eq:average-singular} is bounded by
\[ r(\Omega)^{-\text{codim}(X)} (\log R)^{t-s} = o(1) \]
by the hypothesis $r(\Omega) \geq g(N)(\log N)^{L(\Psi)}$ and Definition \ref{def:rep}. This completes the proof.

\section{Determining the constant $L(\Psi)$}\label{sec:linear-algebra}

In this section we prove Proposition \ref{thm:rep-index}. It will be convenient here to parametrize the linear forms in $\Psi$ differently in the following way. For $v\in\{1,2,\cdots,k\}$ and $I\subset \{1,2,\cdots,k\}$, define
\[ \psi_{v,I}(\ve{x},\ve{y}) = \sum_{i\in I} (v-i)x_i + \sum_{i\notin I} (v-i)y_i, \]
for $\ve{x} = (x_1,\cdots,x_k) \in\Z^k$ and $\ve{y} = (y_1,\cdots,y_k) \in\Z^k$. 
Since the coefficients of $x_v$ and $y_v$ in $\psi_{v,I}$ are always $0$, we have $\psi_{v,I\cup\{v\}} = \psi_{v,I\setminus\{v\}}$. For each $\psi\in \Psi$, define $v(\psi)\in\{1,2,\cdots,k\}$ and $I(\psi)\subset\{1,2,\cdots,k\}$ by the condition that $\psi = \psi_{v(\psi),I(\psi)}$. We impose the constraint that $v(\psi)\in I(\psi)$, so that $v(\psi)$ and $I(\psi)$ are uniquely determined by $\psi$. 
Proposition \ref{thm:rep-index} clearly follows from the following two propositions.

\begin{proposition}\label{prop:codim1}
For any linear subvariety $\Pi\subset\{(\ve{x},\ve{y}):\ve{x},\ve{y}\in\R^k\}$ of codimension $1$, the number  of distinct linear forms in $\Psi$ when restricted to $\Pi$ is at least $(k+1)2^{k-2}$. 
\end{proposition}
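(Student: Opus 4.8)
The plan is to count, for a codimension-$1$ subvariety $\Pi = \{\ell(\ve{x},\ve{y}) = 0\}$ with $\ell$ a fixed nonzero linear form, how many coincidences $\psi_{v,I} = \psi_{v',I'}$ can hold on $\Pi$. A coincidence on $\Pi$ means $\psi_{v,I} - \psi_{v',I'}$ is a scalar multiple of $\ell$; since $\ell$ is a single linear form, the set of forms in $\Psi$ falls into at most $|\pi| \leq |\Psi|$ classes, and to beat the bound we must show the number of \emph{distinct} values (i.e.\ $|\pi|$) is at least $(k+1)2^{k-2}$, equivalently that at most $|\Psi| - (k+1)2^{k-2}$ "merges" occur. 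Recall $|\Psi| = k 2^{k-1} = (2k)2^{k-2}$, so we must show at most $(k-1)2^{k-2}$ merges, which will give the codimension-$1$ estimate $L(\Psi) \geq \bigl((2k)2^{k-2} - (k+1)2^{k-2}\bigr)/1$... wait — more carefully, the quantity $t - |\pi|$ for the trivial partition refinement is $\leq |\Psi| - (k+1)2^{k-2} = (k-1)2^{k-2}$, matching $L_k$ when the codimension is $1$, and Proposition~\ref{prop:codim2} (which I anticipate follows) will handle higher codimension by an inductive/dimension-counting argument. So here I focus on the codimension-$1$ case.

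First I would set up the difference forms: for $\psi_{v,I}$ and $\psi_{v',I'}$, compute
\[
\psi_{v,I} - \psi_{v',I'} = \sum_{i=1}^k \bigl[ (v-i)\mathbf{1}_{i\in I} - (v'-i)\mathbf{1}_{i\in I'} \bigr] x_i + \sum_{i=1}^k \bigl[ (v-i)\mathbf{1}_{i\notin I} - (v'-i)\mathbf{1}_{i\notin I'}\bigr] y_i,
\]
and observe that for this to be proportional to the fixed $\ell$ for many pairs forces strong structure on $\ell$. I would split into the case $v = v'$ (where the difference simplifies, since the $x_i$-coefficient is $(v-i)(\mathbf{1}_{i\in I} - \mathbf{1}_{i\in I'})$ and the $y_i$-coefficient its negative, so $\ell$ would have to have opposite $x_i$ and $y_i$ coefficients supported on $I \triangle I'$) and $v \neq v'$. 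The key combinatorial point is that the forms with a fixed value of $v$ are naturally indexed by subsets $\omega \in \{0,1\}^{[k]\setminus\{v\}}$ (there are $2^{k-1}$ of them), and two such can coincide on $\Pi$ only if $\ell$ is (proportional to) their difference — but a single fixed $\ell$ can be the difference of at most a limited number of disjoint pairs. I would argue that within each fixed-$v$ block, the merges induced by one linear constraint $\ell$ form a graph whose connected components are constrained, bounding the number of merges per block, then sum over $v$.

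The main obstacle I expect is making the "one linear form $\ell$ can cause only few merges" step precise and \emph{uniform over all choices of $\ell$} — in particular ruling out a bad $\ell$ (likely something like $x_v - y_v = 0$ or $x_i = y_i$) that simultaneously collapses many pairs across several $v$-blocks. I would handle this by a case analysis on the "shape" of $\ell$: if $\ell$ only involves variables $x_i, y_i$ for $i$ in some small index set, count merges directly; the genuinely extremal case should be $\ell$ of the form $x_i - y_i$ for a single $i$ (or a combination $\sum_i c_i(x_i - y_i)$), which corresponds exactly to the heuristic in Remark~\ref{rem:Ck} and creates precisely $(k-1)2^{k-2}$ merges. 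Once that extremal configuration is identified, the remaining work is a bookkeeping argument showing no $\ell$ does better, which I would organize by: (i) reducing to $\ell$ with support on the "symmetric" combinations $x_i - y_i$ by showing any other type of constraint yields strictly fewer coincidences, then (ii) for $\ell = \sum_{i\in J} c_i(x_i-y_i)$, directly enumerating which pairs $(\psi_{v,I},\psi_{v',I'})$ it merges — this reduces to: $\psi_{v,I} = \psi_{v,I'}$ on $\Pi$ iff $I \triangle I' \subseteq J$ and the coefficients match, from which the count $(k-1)2^{k-2}$ drops out by summing the number of pairs differing in a subset of $J$ over the $2^{k-1}$ forms in each of the $k$ blocks and accounting for the identification $\psi_{v,I\cup\{v\}} = \psi_{v,I\setminus\{v\}}$.
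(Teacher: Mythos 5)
Your framework is essentially the paper's Case 1: two forms of $\Psi$ agree on $\Pi=\{\ell=0\}$ iff their difference is proportional to $\ell$; one may assume $\ell$ is itself such a difference (otherwise there are no merges at all); the target is to show the deficit $t-|\pi|$ is at most $(k-1)2^{k-2}$, and the extremal configuration $\ell=x_{i_0}-y_{i_0}$, giving exactly $(k-1)2^{k-2}$ merged pairs inside the same-$v$ blocks, is correctly identified (in your step (ii) the condition should be $I\triangle I'=J$ exactly, not $\subseteq J$, but that is minor). The problem is that the proposal defers precisely the steps where the difficulty sits, and one of its structural claims is wrong as stated.

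First, coincidence classes of size at least $3$: your graph/components remark acknowledges them but supplies no tool. The paper's key input is Lemma~\ref{lem:3forms}: three distinct forms of $\Psi$ coinciding on a hyperplane must share a common set of variables, which forces $\ell$ to be proportional to $\sum_{i\in I}x_i+\sum_{i\notin I}y_i$. That $\ell$ is \emph{not} of your ``symmetric'' shape $\sum_i c_i(x_i-y_i)$, and yet it does create merges (the $k$ forms $\psi_{v,I\cup\{v\}}$, $1\le v\le k$, collapse into one class), so your reduction step (i) --- ``any non-symmetric $\ell$ yields strictly fewer coincidences, hence assume $\ell$ symmetric'' --- is exactly what must be proved, not a harmless normalization; in the paper it is the content of Case 2 together with Lemma~\ref{lem:3forms}. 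Second, the cross-block case $v(\psi_1)\ne v(\psi_2)$ is mentioned but never analyzed; the paper handles it by noting both forms omit two variables, so all $\ge 2^{k-1}$ forms involving either omitted variable stay singletons, which bounds the merges by $(k-1)2^{k-2}$ --- i.e.\ the same bound, not ``strictly fewer,'' so even the strictness underlying your reduction is not right. To complete your outline you would need (a) a lemma in the spirit of Lemma~\ref{lem:3forms} controlling multi-form coincidences and an explicit accounting of the common-variable hyperplane, and (b) an actual count for the $v\ne v'$ pairs; your enumeration covers only the same-$v$, symmetric-$\ell$ situation, which is also the easiest part of the paper's argument.
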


\begin{proposition}\label{prop:codim2}
For any linear subvariety $\Pi\subset\{(\ve{x},\ve{y}):\ve{x},\ve{y}\in\R^k\}$ of codimension $2$, the number  of distinct linear forms in $\Psi$ when restricted to $\Pi$ is at least $2^{k-1}$. 
\end{proposition}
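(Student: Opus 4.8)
The plan is to count distinct linear forms in $\Psi$ restricted to a codimension-$2$ subvariety $\Pi$ by fixing a coordinate $v \in \{1,\dots,k\}$ and looking only at the family of forms $\{\psi_{v,I} : v \in I \subset [k]\}$. Note that for a fixed $v$, as $I$ ranges over subsets of $[k]$ containing $v$, the forms $\psi_{v,I}$ differ only in which of $x_i$ or $y_i$ appears (with coefficient $v-i$) for $i \neq v$; there are $2^{k-1}$ such forms, and they are pairwise distinct as forms on all of $\R^{2k}$ precisely because no coefficient $v-i$ vanishes for $i \neq v$. The idea is that a codimension-$2$ constraint can collapse at most a controlled number of these $2^{k-1}$ forms, leaving still at least $2^{k-1}$ distinct forms among the union over several values of $v$; more realistically, I expect the cleanest route is to show that for \emph{at least one} choice of $v$, the restriction map on $\{\psi_{v,I}\}$ identifies forms only in pairs (or small clusters), so that at least $2^{k-2}$ survive for that $v$, and then combine two well-chosen values of $v$ whose form-sets are ``independent enough'' to give $2^{k-1}$ in total.

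First I would set up the linear algebra: $\Pi$ is cut out by two independent linear equations $\ell_1(\ve{x},\ve{y}) = \ell_2(\ve{x},\ve{y}) = 0$, and two forms $\psi, \psi'$ coincide on $\Pi$ iff $\psi - \psi'$ lies in the span of $\ell_1, \ell_2$. So the question becomes: how large can the intersection of $\mathrm{span}(\ell_1,\ell_2)$ be with the difference set $\{\psi_{v,I} - \psi_{v,I'}\}$? For a fixed $v$, the differences $\psi_{v,I} - \psi_{v,I'} = \sum_{i \in I \triangle I'} \pm (v-i)(x_i - y_i)$ all live in the $(k-1)$-dimensional space $W_v = \mathrm{span}\{(v-i)(x_i-y_i) : i \neq v\}$, and the $2^{k-1}$ forms $\psi_{v,I}$ are a coset of a \emph{linear} arrangement: concretely, modulo the fixed form $\psi_{v,\{v\}} = \sum_{i \neq v}(v-i)y_i$, the set $\{\psi_{v,I}\}$ is exactly $\{\sum_{i \in J}(v-i)(x_i - y_i) : J \subset [k]\setminus\{v\}\}$, i.e.\ the vertex set of a parallelepiped spanned by the $k-1$ independent vectors $(v-i)(x_i-y_i)$. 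The number of distinct values of a linear functional (here, reduction modulo $\mathrm{span}(\ell_1,\ell_2) \cap W_v$) on such a parallelepiped vertex set is $2^{\,(k-1) - \dim(\mathrm{span}(\ell_1,\ell_2)\cap W_v)}$. Since $\dim(\mathrm{span}(\ell_1,\ell_2)\cap W_v) \leq 2$, we always get at least $2^{k-3}$ distinct forms for each $v$; to reach $2^{k-1}$ we must either find a $v$ with $\dim(\mathrm{span}(\ell_1,\ell_2)\cap W_v) = 0$, or combine several $v$'s.

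The key combinatorial input I would develop is that the subspaces $W_v$ (for $v = 1,\dots,k$) cannot all meet $\mathrm{span}(\ell_1,\ell_2)$ in dimension $\geq 1$ in a compatible way: each $W_v$ is spanned by the rescaled difference vectors $e_i - e_{i+k}$ over $i \neq v$, so $\bigcap_v W_v = \mathrm{span}\{e_i - e_{i+k} : i \in [k]\}$ has dimension $k$... wait, rather $\bigcap_v W_v$ has dimension $k-2$ (missing two coordinates cannot be excluded simultaneously), and in any case a generic $2$-dimensional space hits few $W_v$'s nontrivially. I would make this precise by a counting/pigeonhole argument: if $\ell$ is a nonzero element of $\mathrm{span}(\ell_1,\ell_2)$, write $\ell = \sum a_i x_i + \sum b_i y_i$; for $\ell$ to lie in $W_v$ we need $a_i + b_i = 0$ for all $i$ and $a_v = b_v = 0$, and the coefficient pattern $(a_i/(v-i))$ to be realizable — but the constraint $a_v = 0$ rules out $\ell \in W_v$ unless the $v$-th coordinate of $\ell$ already vanishes, which for a $2$-dimensional space happens for at most a bounded number of indices $v$ unless $\ell$ is very special. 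Pinning down exactly how many $v$'s can be ``bad'' simultaneously — and checking that even in the worst case two good values of $v$ remain whose surviving form-sets are disjoint — is the main obstacle; I expect it to reduce to a short case analysis on the shapes of $\ell_1, \ell_2$ (how many coordinates they kill), mirroring the codimension-$1$ analysis in Proposition \ref{prop:codim1} but with one extra layer of bookkeeping.
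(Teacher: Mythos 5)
Your linear-algebra setup (replacing coincidence on $\Pi$ by membership of differences in the $2$-dimensional space $U=\mathrm{span}(\ell_1,\ell_2)$, and organizing $\Psi$ into the families $\{\psi_{v,I}\}$ whose internal differences live in $W_v=\mathrm{span}\{x_i-y_i:i\neq v\}$) is sound, but two points need repair. A minor one: the claim that the number of distinct restrictions of a family is \emph{exactly} $2^{(k-1)-\dim(U\cap W_v)}$ is false — quotienting the vertex set of a parallelepiped spanned by $c_1,\dots,c_{k-1}$ by the line through $c_1-c_2$ leaves three images, not $2^{k-2}$. Only the lower bound is true, and it needs a short argument: pick $(k-1)-\dim(U\cap W_v)$ edge vectors whose images in $W_v/(U\cap W_v)$ stay independent, and use that subset sums of independent vectors are pairwise distinct. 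Similarly $\bigcap_v W_v=\{0\}$, not a $(k-2)$-dimensional space; this slip is harmless by itself but shows that counting ``bad'' indices $v$ one form $\ell$ at a time is not the right invariant.

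The essential gap is the step you flag yourself: when no $v$ has $U\cap W_v=\{0\}$ you must combine two families and prove their surviving restrictions are disjoint, and no argument is offered. Your heuristic (``the $v$-th coordinate vanishes for only boundedly many $v$'') breaks precisely in the critical case $U\subset W:=\mathrm{span}\{x_i-y_i:i\in[k]\}$, where $\dim(U\cap W_v)\geq 1$ for \emph{every} $v$ and the guaranteed per-family count is only $2^{k-2}$. The correct dichotomy is on $\dim(U\cap W)$: (i) if $U\cap W=\{0\}$, within-family differences lie in $W$, so a single family already gives $2^{k-1}$ distinct restrictions; (ii) if $U\cap W=\mathrm{span}(w)$ with $w=\sum_i a_i(x_i-y_i)\neq 0$, choose $v$ with $a_v\neq 0$; then $U\cap W_v=\{0\}$ and again one family suffices; (iii) if $U\subset W$, there are two indices $v_1\neq v_2$ at which not all of $U$ vanishes (else $\dim U\leq 1$), so each of those families contributes at least $2^{k-2}$, and for $v_1\neq v_2$ the difference $\psi_{v_1,I}-\psi_{v_2,I'}$ never lies in $W$ (at the index $i=v_1$ exactly one of its $x_{v_1}$-, $y_{v_1}$-coefficients is nonzero, equal to $\pm(v_1-v_2)$), hence never in $U$, so the two families' restrictions are disjoint and total at least $2^{k-1}$. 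With (i)--(iii) supplied your argument becomes a complete proof, and one genuinely different from the paper's, which instead partitions $\Psi$ by restriction classes, bounds class sizes via the five-forms lemma (with a separate $k=3$ arithmetic-progression trick), and in the shared-variables case passes to the hyperplane $\Pi_I$ and combines a pigeonhole count with the lemma capping a coincident collection at $k$ forms. As submitted, the combination/disjointness step is missing, so the proposal is not yet a proof.
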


We will prove them in Sections \ref{sec:Ck-codim1} and \ref{sec:Ck-codim2}, after developing a few preliminary lemmas in Section \ref{sec:Ck-lemmas}. In this section we always use $\psi_1, \psi_2, \cdots$ to denote linear forms in $\Psi$ instead of the ones defined in \eqref{eq:psi-j}.

\subsection{Dependencies among linear forms in $\Psi$}\label{sec:Ck-lemmas}

For a collection $\{\psi_1,\cdots,\psi_s\}\subset \Psi$ of linear forms, denote by $\Pi(\psi_1,\cdots,\psi_s)$ the linear subvariety consisting of those $(\ve{x},\ve{y})$ such that the values $\psi_i(\ve{x},\ve{y})$ are all identical for $1\leq i\leq s$. Generically we expect $\Pi(\psi_1,\cdots,\psi_s)$ to have codimension $s-1$. The following lemmas classify a few non-generic cases.

\begin{lemma}[Non-generic case of three linear forms]\label{lem:3forms}
Let $\psi_1,\psi_2,\psi_3\in \Psi$ be three distinct linear forms. If $\Pi(\psi_1,\psi_2,\psi_3)$ has codimension $1$, then $\psi_1,\psi_2,\psi_3$ share a common set of variables.
\end{lemma}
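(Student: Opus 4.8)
\textbf{Proof plan for Lemma~\ref{lem:3forms}.} The plan is to show that if three distinct linear forms $\psi_1,\psi_2,\psi_3 \in \Psi$ fail to share a common variable set, then imposing $\psi_1 = \psi_2 = \psi_3$ forces two independent linear conditions, so $\Pi(\psi_1,\psi_2,\psi_3)$ has codimension at least $2$. Recall the parametrization: each $\psi_m = \psi_{v_m, I_m}$ with $v_m \in I_m \subset [k]$, and $\psi_{v,I}(\ve x,\ve y) = \sum_{i\in I}(v-i)x_i + \sum_{i\notin I}(v-i)y_i$, so the ``variable set'' of $\psi_m$ means the ordered data of which slot ($x_i$ or $y_i$) it uses for each $i$, equivalently the set $I_m$ (with $i=v_m$ being the unique index where the coefficient vanishes, so $x_{v_m}$ vs.\ $y_{v_m}$ is immaterial). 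The hypothesis ``do not share a common set of variables'' thus means the $I_m$ are not all equal (modulo the ambiguity at $i = v_m$).

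First I would set up the difference forms. The condition $\psi_1 = \psi_2 = \psi_3$ on $\Pi$ is equivalent to the two linear conditions $\psi_1 - \psi_2 = 0$ and $\psi_1 - \psi_3 = 0$. For $m \neq m'$, the form $\psi_m - \psi_{m'}$ has, in the $x_i$ (resp.\ $y_i$) coordinate, a coefficient that is a combination of $(v_m - i)$ and $(v_{m'}-i)$ depending on whether $i$ lies in $I_m$, $I_{m'}$, both, or neither: if $i \in I_m \cap I_{m'}$ the $x_i$-coefficient is $(v_m - v_{m'})$ and the $y_i$-coefficient is $0$; if $i \notin I_m \cup I_{m'}$ it is the other way round; and if $i$ lies in exactly one of them, say $i \in I_m \setminus I_{m'}$, then $x_i$ gets $(v_m - i)$ and $y_i$ gets $-(v_{m'}-i)$, which are not both zero unless $i = v_m = v_{m'}$. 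So I would argue: if $I_1, I_2, I_3$ are not all equal (up to the $v$-ambiguity), then there is an index $i_0$ lying in, say, $I_1 \setminus I_2$ (or symmetric), and this puts a genuine variable — $x_{i_0}$ or $y_{i_0}$ — into the support of $\psi_1 - \psi_2$ that is \emph{not} controlled merely by the single relation the codimension-$1$ hypothesis allows. The crux is to show that one cannot have both $\psi_1 - \psi_2$ and $\psi_1 - \psi_3$ be scalar multiples of a single common linear functional $\ell$; I would do this by a support/coefficient comparison, tracking the indices where $I_1, I_2, I_3$ disagree and showing that the disagreements between the $(1,2)$ pair and the $(1,3)$ pair cannot be carried by the same $\ell$ unless in fact all three agree.

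The key steps, in order, are: (i) translate ``$\Pi(\psi_1,\psi_2,\psi_3)$ has codimension $1$'' into ``$\psi_1 - \psi_2$ and $\psi_1 - \psi_3$ span a $1$-dimensional space, i.e.\ are proportional'' (using that they cannot both vanish since the $\psi_m$ are distinct); (ii) compute the supports of these difference forms in terms of the symmetric differences $I_1 \triangle I_2$ and $I_1 \triangle I_3$ and the values $v_1, v_2, v_3$; (iii) assume for contradiction that the $I_m$ are not all equal, pick an index witnessing a disagreement, and derive that proportionality forces an equality among coefficients that collapses the disagreement — iterating to conclude $I_1 = I_2 = I_3$ (up to the harmless ambiguity at the $v_m$), which is exactly the ``common set of variables'' conclusion. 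I expect step (iii) to be the main obstacle: the bookkeeping splits into subcases according to whether $v_1, v_2, v_3$ coincide and according to whether the disagreeing indices for the $(1,2)$ and $(1,3)$ pairs overlap, and one must check in each subcase that proportionality of the two difference forms is impossible unless the forms secretly share their variable set. The cases are elementary linear algebra over $\R$ with small integer coefficients $(v - i)$, but they need to be enumerated carefully; the distinctness of the $\psi_m$ and the constraint $v_m \in I_m$ are what rule out the degenerate coincidences.
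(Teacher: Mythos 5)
Your plan is correct and is essentially the paper's argument: the codimension-$1$ hypothesis yields a nontrivial linear relation among $\psi_1,\psi_2,\psi_3$ (equivalently, proportionality of the two difference forms, nonzero by distinctness), and comparing $x_i$- and $y_i$-coefficients, using the distinctness of the forms and the normalization $v(\psi)\in I(\psi)$, forces $I(\psi_j)=I\cup\{v(\psi_j)\}$ with $I=I_1\cap I_2\cap I_3$. The case analysis you defer in step (iii) is in fact very short once organized this way: for $i_1\in I_1\setminus I$, either $i_1$ lies in neither $I_2$ nor $I_3$, and the $x_{i_1}$-coefficient identity gives $i_1=v_1$, or $i_1$ lies in exactly one of them, and the $y_{i_1}$-coefficient identity gives $i_1=v_3$ (resp.\ $v_2$), contradicting $v_j\in I_j$ — no subcases on whether $v_1,v_2,v_3$ coincide are needed.
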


Here and later, we say that a collection of linear forms $\{\psi_1,\cdots,\psi_s\} \subset \Psi$ shares a common set of variables, if there exists a subset $I\subset\{1,2,\cdots,k\}$ such that $I(\psi_j) = I\cup\{v(\psi_j)\}$ for each $1\leq j\leq s$. In other words, all linear forms $\psi_1,\cdots,\psi_s$ depend only on the variables $\{x_i:i\in I\}$ and $\{y_i:i\notin I\}$.

\begin{proof}
Write $v_j=v(\psi_j)$ and $I_j=I(\psi_j)$ for $j\in\{1,2,3\}$.
Since $\Pi(\psi_1,\psi_2,\psi_3)$ has codimension $1$, there exist nonzero constants $c_1,c_2,c_3\in\R$ with $c_1+c_2+c_3=0$, such that
\[ c_1\psi_1 + c_2\psi_2 + c_3\psi_3 = 0. \]
Examining the coefficients of $x_i$ and $y_i$ in the above equation, we obtain
\begin{equation}\label{eq:3inI} 
c_1(i-v_1)\mathbf{1}_{i\in I_1} + c_2(i-v_2)\mathbf{1}_{i\in I_2} + c_3(i-v_3)\mathbf{1}_{i\in I_3} = 0， 
\end{equation}
and
\begin{equation}\label{eq:3notinI}  
c_1(i-v_1)\mathbf{1}_{i\notin I_1} + c_2(i-v_2)\mathbf{1}_{i\notin I_2} + c_3(i-v_3)\mathbf{1}_{i\notin I_3} = 0，
\end{equation}
for each $1\leq i\leq k$. 
Let $I=I_1\cap I_2\cap I_3$. We show that $I_1 = I \cup \{v_1\}$, and thus similarly $I_2 = I\cup\{v_2\}$ and $I_3 = I\cup\{v_3\}$. To this end, we pick an arbitrary $i_1\in I_1\setminus I$, and prove that $i_1 = v_1$. Since $i_1\notin I$, $i_1$ lies in at most one of $I_2$ and $I_3$. If $i_1$ lies in neither $I_2$ nor $I_3$, then \eqref{eq:3inI} with $i=i_1$ yields
\[ c_1(i_1-v_1) = 0. \]
Since $c_1\neq 0$, we have $i_1=v_1$ as desired.

Now assume that $i_1$ lies in exactly one of $I_2$ and $I_3$. Without loss of generality, assume that $i_1\in I_2$ and $i_1\notin I_3$. Then \eqref{eq:3notinI} with $i=i_1$ yields
\[ c_3(i_1-v_3) = 0. \]
Since $c_3\neq 0$, we have $i_1=v_3$, but this contradicts our restriction that $v_3\in I_3$.
\end{proof}

\begin{lemma}[Non-generic case of five linear forms]\label{lem:5forms}
Let $\psi_1,\cdots,\psi_5\in \Psi$ be five distinct linear forms. If $\Pi(\psi_1,\cdots,\psi_5)$ has codimension at most $2$ , then three of them share a common set of variables.
\end{lemma}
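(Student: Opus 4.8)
\textbf{Proof plan for Lemma \ref{lem:5forms}.}

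The plan is to run the same linear-algebra strategy as in Lemma \ref{lem:3forms}, but now starting from the richer dependency structure forced by codimension $\leq 2$. Since $\Pi(\psi_1,\cdots,\psi_5)$ has codimension at most $2$, the four linear functionals $\psi_1-\psi_2,\ \psi_1-\psi_3,\ \psi_1-\psi_4,\ \psi_1-\psi_5$ span a space of dimension at most $2$, so there are at least two linearly independent linear relations of the form $\sum_{j=1}^5 c_j\psi_j=0$ with $\sum_j c_j=0$. First I would fix attention on the support structure of these relations: a relation involving exactly three of the $\psi_j$ with all three coefficients nonzero forces (by Lemma \ref{lem:3forms}) those three to share a common set of variables, which would already give the conclusion; so I may assume no such ``short'' relation exists, i.e. every nontrivial relation in the $2$-dimensional relation space has support of size $4$ or $5$.

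Next I would record, exactly as in the proof of Lemma \ref{lem:3forms}, the coefficient equations: writing $v_j=v(\psi_j)$, $I_j=I(\psi_j)$, for any relation $\sum c_j\psi_j=0$ one gets, for every $1\leq i\leq k$,
\[ \sum_{j=1}^5 c_j(i-v_j)\mathbf{1}_{i\in I_j}=0, \qquad \sum_{j=1}^5 c_j(i-v_j)\mathbf{1}_{i\notin I_j}=0. \]
Adding these two gives $\sum_j c_j(i-v_j)=0$ for all $i$, hence $\sum_j c_j=0$ (automatic) and $\sum_j c_j v_j=0$. The key combinatorial step is then to analyze, index by index $i$, which of the five sets $I_j$ contain $i$; the ``profile'' of $i$ is the subset $T_i=\{j:i\in I_j\}\subset\{1,\dots,5\}$, and the equation above says that for a fixed relation the vector $(c_j(i-v_j))_{j\in T_i}$ sums to zero, as does its complement. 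The goal is to show that, unless three of the $I_j$ differ only in their mandatory element $v_j$ (the definition of sharing a common set of variables), the two independent relations cannot both be supported on $\geq 4$ indices while remaining consistent with these per-$i$ vanishing conditions and with the constraint $v_j\in I_j$. Concretely I would take a relation of support size $4$ (if one exists) — say supported on $\{1,2,3,4\}$ — and rerun the argument of Lemma \ref{lem:3forms} with four forms in place of three: for $i_1$ lying in the common part, track in how many of $I_1,I_2,I_3,I_4$ it lies, and use whichever of the two displayed equations has the fewer terms to pin down $i_1\in\{v_j\}$, deriving either the shared-variables conclusion among three of them or a contradiction with $v_j\in I_j$. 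If every relation has full support $5$, I would use the existence of \emph{two} independent such relations: picking a suitable linear combination kills one coefficient and returns us to the support-$4$ case.

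The step I expect to be the main obstacle is the case analysis when all relations have support size exactly $4$ (or the forced-full-support case), because the argument of Lemma \ref{lem:3forms} crucially used that an index outside the common intersection lies in \emph{at most one} of the remaining sets — with four or five forms an index can lie in two or more of them, and then neither of the two coefficient equations collapses to a single term. Handling this will require bookkeeping on the possible profiles $T_i$: one has to show that if some index $i$ has $|T_i\cap\mathrm{supp}|\geq 2$ and $|T_i^c\cap\mathrm{supp}|\geq 2$ for \emph{every} index outside the intersection, then the relation coefficients are overdetermined (the $i$-th equation, ranging over all such $i$, forces enough cancellation that the relation space has dimension $<2$, contradicting codimension $\leq 2$), thereby pushing us back to the tractable case where some index has a small profile and the Lemma \ref{lem:3forms}-type collapse applies. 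I would organize this as: (a) reduce to support $4$; (b) among those four forms, find an index $i_1$ in the pairwise-common part whose profile is small on one side; (c) conclude $i_1$ equals one of the $v_j$, and iterate to force $I_a=I_b=I_c$ up to the $v$'s for three indices $a,b,c$.
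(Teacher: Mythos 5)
Your plan sets up the right framework (linear relations forced by the codimension hypothesis, the coefficient identities for $x_i$ and $y_i$, and a reduction to Lemma \ref{lem:3forms} when a coefficient vanishes), but there is a genuine gap exactly at the step you yourself flag as the main obstacle, and it is not mere bookkeeping. Once you have committed to a particular relation chosen by its support, an index $i_1$ whose membership pattern splits the four supported forms $2$--$2$ yields identities with two terms on each side, e.g. $c_1(i_1-v_1)+c_2(i_1-v_2)=0$ and $c_3(i_1-v_3)+c_4(i_1-v_4)=0$, from which nothing about $i_1$ can be extracted; your hope that such indices make the coefficients ``overdetermined'' is not substantiated, and no argument is offered that would rule this configuration out or exploit the second independent relation to resolve it.

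The paper's proof avoids the $2$--$2$ configuration altogether by reversing the order of choices. It argues by contradiction (assume no three forms share a common set of variables), sets $I=I_1\cap\cdots\cap I_5$, fixes an index $i_1\in I_1\setminus I$, and only then selects which four of the five forms to relate, according to how many of $I_2,\dots,I_5$ contain $i_1$ (zero, one, two or three; four is impossible since $i_1\notin I$). Because $\Pi$ of any four of the five forms still has codimension at most $2$, every $4$-subset carries a nontrivial relation $c_a\psi_a+c_b\psi_b+c_c\psi_c+c_d\psi_d=0$ with coefficients summing to zero; and by dropping a form that contains $i_1$ when $i_1$ lies in exactly one or exactly three of $I_2,\dots,I_5$, one always arranges a $1$--$3$ or $3$--$1$ split among the chosen four, so that the $x_{i_1}$ or the $y_{i_1}$ identity isolates a single term $c_j(i_1-v_j)=0$. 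If $c_j\neq 0$ this pins $i_1=v_j$, which either gives the desired $i_1=v_1$ or contradicts $v_j\in I_j$; if $c_j=0$, the remaining three forms satisfy a relation in which all coefficients are nonzero (two nonzero coefficients would force two distinct forms to coincide), so Lemma \ref{lem:3forms} makes them share a common set of variables, contradicting the assumption. This adaptive choice of the $4$-subset after fixing the index is the idea missing from your proposal; without it, steps (b)--(c) of your plan cannot be completed as described.
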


\begin{proof}
Write $v_j=v(\psi_j)$ and $I_j=I(\psi_j)$ for $1\leq j\leq 5$. Suppose, for the purpose of contradiction, that no three of $\psi_1,\cdots,\psi_5$ share a common set of variables. Let $I=I_1\cap\cdots\cap I_5$. We show that $I_1 = I\cup\{v_1\}$, and thus similarly $I_j = I\cup\{v_j\}$ for each $2\leq j\leq 5$. To this end, we pick an arbitrary $i_1\in I_1\setminus I$, and prove that $i_1 = v_1$. We divide into cases according to whether $i_1$ lies in $I_2,\cdots,I_5$ or not.

First assume that $i_1$ lies in none of $I_2,\cdots,I_5$. 
Since $\Pi(\psi_1,\cdots,\psi_5)$ has codimension at most $2$, in particular $\Pi(\psi_1,\cdots,\psi_4)$ has codimension at most $2$. Hence there exist constants $c_1,\cdots,c_4\in\R$, not all zeros, with $c_1+\cdots+c_4=0$, such that
\begin{equation}\label{eq:cpsi4} 
c_1\psi_1 + c_2\psi_2 + c_3\psi_3 + c_4\psi_4 = 0. 
\end{equation}
Examining the coefficients of $x_{i_1}$ in the above equation, we obtain
\begin{equation}\label{eq:i1} 
c_1(i_1-v_1) = 0. 
\end{equation}
If $c_1=0$, then we may apply Lemma \ref{lem:3forms} to $\psi_2,\psi_3,\psi_4$ to conclude that $\psi_2,\psi_3,\psi_4$ share a common set of variables, a contradiction. Hence $c_1\neq 0$, and thus $i_1 = v_1$ as desired.

Next assume that $i_1$ lies in exactly one of $I_2,\cdots,I_5$, say $I_2$. Repeat the argument above with $\psi_1,\psi_3,\psi_4,\psi_5$ (instead of $\psi_1,\psi_2,\psi_3,\psi_4$) to arrive at \eqref{eq:i1} again.

If $i_1$ lies in exactly two of $I_2,\cdots,I_5$, say $I_2$ and $I_3$, then by examining the coefficients of $y_{i_1}$ in \eqref{eq:cpsi4} we obtain
\[ c_4(i_1 - v_4) = 0. \]
If $c_4=0$, then Lemma \ref{lem:3forms} implies that $\psi_1,\psi_2,\psi_3$ share a common set of variables, a contradiction. Hence $c_4\neq 0$, and thus $i_1 = v_4$, but this contradicts our restriction that $v_4\in I_4$.

Finally, if $I_1$ lies in exactly three of $I_2,\cdots,I_5$, say $I_2,I_3,I_4$, then repeat the argument in the previous case with $\psi_1,\psi_2,\psi_3,\psi_5$ to arrive at $i_1 = v_5$, again contradicting our restriction that $v_5 \in I_5$.
\end{proof}

\begin{lemma}[Non-generic case of linear forms restricted to a hyperplane]\label{lem:kforms}
Let $I\subset \{1,2,\cdots,k\}$ be a subset and $\Pi_I$ be a subspace defined by
\[ \Pi_I = \left\{(\ve{x},\ve{y}): \sum_{i\in I}x_i + \sum_{i\notin I}y_i = 0 \right\}. \]
Let $\psi_1,\cdots,\psi_s$ be linear forms in $\Psi$, and let $\widetilde{\psi}_1,\cdots,\widetilde{\psi}_s$ be their restrictions to $\Pi_I$. Suppose that  $\widetilde{\psi}_1,\cdots,\widetilde{\psi}_s$ are all distinct, and that $\Pi(\widetilde{\psi}_1,\cdots,\widetilde{\psi}_s)$ has codimension at most $1$ in $\Pi_I$. Then $s\leq k$.
\end{lemma}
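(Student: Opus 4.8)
\textbf{Proof plan for Lemma~\ref{lem:kforms}.}
The plan is to argue by contradiction: suppose $s \geq k+1$ distinct forms $\psi_1,\dots,\psi_s \in \Psi$ have restrictions $\widetilde{\psi}_1,\dots,\widetilde{\psi}_s$ to $\Pi_I$ that are pairwise distinct, yet $\Pi(\widetilde{\psi}_1,\dots,\widetilde{\psi}_s)$ has codimension at most $1$ inside $\Pi_I$. Codimension $\leq 1$ means there is a single (nontrivial) relation $\sum_j c_j \widetilde{\psi}_j = 0$ with $\sum_j c_j = 0$ holding on $\Pi_I$, i.e., working in the ambient space, $\sum_j c_j \psi_j = \lambda \cdot \ell_I$ for the linear form $\ell_I(\ve{x},\ve{y}) = \sum_{i \in I} x_i + \sum_{i \notin I} y_i$ cutting out $\Pi_I$, and moreover $\sum_j c_j = 0$ and not all $c_j$ vanish. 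The key point I want to exploit is that each $\psi_j = \psi_{v_j, I_j}$ has, in variable $x_i$ for $i \in I_j$, coefficient $v_j - i$, and in $y_i$ for $i \notin I_j$, coefficient $v_j - i$; in particular the coefficient of $x_i$ and of $y_i$ in $\psi_j$ sum to $v_j - i$ regardless of whether $i \in I_j$, while $\ell_I$ has $x_i + y_i$-coefficient-sum equal to $1$ for every $i$.

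First I would record the bookkeeping: for each coordinate index $i \in \{1,\dots,k\}$, comparing the coefficient of $x_i$ on both sides of $\sum_j c_j \psi_j = \lambda \ell_I$ gives $\sum_{j : i \in I_j} c_j (v_j - i) = \lambda \mathbf{1}_{i \in I}$, and comparing the coefficient of $y_i$ gives $\sum_{j : i \notin I_j} c_j (v_j - i) = \lambda \mathbf{1}_{i \notin I}$. Adding these two equations for a fixed $i$ yields the clean identity
\[ \sum_{j=1}^s c_j (v_j - i) = \lambda, \qquad \text{for every } 1 \leq i \leq k. \]
Since this must hold for every $i$, and $\sum_j c_j = 0$, the left side is actually independent of $i$ (the $i$-dependent part is $-i \sum_j c_j = 0$), so this is just one equation $\sum_j c_j v_j = \lambda$, consistent but not yet contradictory. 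The real content must therefore come from the \emph{difference} of the $x_i$- and $y_i$-equations together with the constraint that each $\psi_j$ is genuinely a form of the special shape $\psi_{v_j,I_j}$ with $v_j \in I_j$, plus the hypothesis that the $\widetilde{\psi}_j$ are \emph{distinct on $\Pi_I$} (which forbids $\psi_j - \psi_{j'}$ from being a multiple of $\ell_I$).

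The next step is to treat separately the forms with $\lambda$-contribution, i.e. to localize where the relation is supported. I expect the argument to proceed by a counting/pigeonhole step: the set $\{v_1,\dots,v_s\}$ lives in $\{1,\dots,k\}$, so if $s \geq k+1$, two forms share the same value $v_j = v_{j'}$; for such a pair, $\psi_j - \psi_{j'}$ depends only on coordinates $i$ where $i \in I_j \triangle I_{j'}$ (a symmetric difference), and on $\Pi_I$ this difference being nonzero forces $I_j \triangle I_{j'}$ to not be "absorbed" by $\ell_I$ — one then tracks how many distinct $I_j$ can be attached to each fixed $v$-value. The main obstacle, and the part requiring genuine care, will be organizing this case analysis so that the single relation $\sum_j c_j \psi_j = \lambda \ell_I$ is shown to be impossible once $s > k$: concretely, I would partition $\{1,\dots,s\}$ by the value $v_j$, show that within each part the forms are forced (via the $x_i$ vs. $y_i$ coefficient comparison at the indices $i$ in the relevant symmetric differences) to have coefficients $c_j$ constrained so tightly that either some $c_j = 0$ — allowing an inductive descent to fewer forms, handing off to Lemma~\ref{lem:3forms} or Lemma~\ref{lem:5forms} when the count drops to $3$ or $5$ — or the restriction $v_j \in I_j$ is violated, exactly as in the proofs of those two lemmas. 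So the structure mirrors Lemma~\ref{lem:5forms}: pick a witnessing coordinate $i_1 \in I_1 \setminus (I_1 \cap \cdots \cap I_s)$, case on how many $I_j$ contain it, and in each case derive either $c_1 = 0$ (contradiction via a smaller sub-lemma) or $i_1 = v_j$ for some $j$ with $i_1 \notin I_j$ (contradicting $v_j \in I_j$); the bound $s \leq k$ then falls out because with $s \geq k+1$ such a coordinate $i_1$ must exist and every case is excluded.
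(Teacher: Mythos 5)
Your translation of the codimension hypothesis is the critical gap. For $s$ forms, ``$\Pi(\widetilde{\psi}_1,\cdots,\widetilde{\psi}_s)$ has codimension at most $1$ in $\Pi_I$'' does not mean that there is a single nontrivial relation $\sum_j c_j\widetilde{\psi}_j=0$ with $\sum_j c_j=0$; it means that the differences $\widetilde{\psi}_i-\widetilde{\psi}_j$ span a space of dimension at most $1$, so \emph{every} pairwise difference is proportional to every other, and the hypothesis can (and must) be applied to every subcollection, in particular to every triple. A single relation among all $s$ forms is far too weak to force $s\le k$: one can take a genuine $3$-term relation (e.g.\ $\psi_1-2\psi_2+\psi_3=0$ for the three forms with $I_j=\{1,\dots,k\}$ and $v_j=1,2,3$ when $k=3$) and pad the collection with arbitrarily many further forms carrying coefficient $0$, keeping all restrictions to $\Pi_I$ distinct; the relation persists while $s$ grows without bound. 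So the contradiction your plan aims for --- ``show the single relation $\sum_j c_j\psi_j=\lambda\ell_I$ is impossible once $s>k$'' --- cannot be reached by any case analysis on the coefficients of one relation. The paper instead uses the hypothesis for triples: taking WLOG $I=\{1,\dots,k\}$, for any $i_0$ with $i_0\in I_{j_1}\cap I_{j_2}$ and $i_0\notin I_{j_3}$, the codimension condition gives nonzero $c_1,c_2,c_3$ with $c_1\psi_{j_1}+c_2\psi_{j_2}+c_3\psi_{j_3}=c(x_1+\cdots+x_k)$, and comparing the $y_{i_0}$-coefficients forces $i_0=v_{j_3}$, contradicting $v_{j_3}\in I_{j_3}$. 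This proves the structural assertion that every index lies in none, all, or exactly one of the sets $I_j$ --- an ingredient absent from your plan.

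The second missing piece is the mechanism that converts structure into the bound $s\le k$. Your pigeonhole ``if $s\ge k+1$ then two forms share the same $v_j$'' and the claim that a witnessing coordinate in $I_1\setminus\bigcap_j I_j$ must exist are not justified (all the $I_j$ could coincide, making that set empty; that case is handled instead by the distinctness of the $v_j\in I_1$), and the proposed dichotomy ending in ``hand off to Lemma~\ref{lem:3forms} or~\ref{lem:5forms}'' does not produce a contradiction here: those lemmas conclude that certain forms share a common set of variables, which is perfectly consistent in the present setting (all forms with $I_j=I_0$ do share variables), and they concern codimension in the ambient space rather than restrictions to $\Pi_I$. The paper's count is different: with $I_0=\bigcap_j I_j$, the forms with $I_j=I_0$ have pairwise distinct $v_j\in I_0$, so there are at most $|I_0|$ of them, while by the none/all/exactly-one assertion each remaining form occupies an index outside $I_0$ exclusively, so there are at most $k-|I_0|$ of those; adding gives $s\le k$. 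Your sketch contains the right raw ingredients (coefficient comparison, a witnessing coordinate, a case analysis in the spirit of Lemma~\ref{lem:5forms}), but without applying the codimension hypothesis to triples and without this final counting step the argument does not go through.
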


\begin{proof}
Without loss of generality we may assume that $I=\{1,2,\cdots,k\}$, so that $\Pi_I$ is cut out by the equation $x_1+\cdots+x_k=0$.
Write $v_j=v(\psi_j)$ and $I_j=I(\psi_j)$ for $1\leq j\leq s$. It suffices to prove the assertion that each index $i_0$ belongs to either none of $I_j$, or all of $I_j$, or exactly one of $I_j$. Indeed, suppose that this is proved, and let $I_0$ be the intersection $I_1\cap\cdots\cap I_s$. Suppose that $I_1=\cdots=I_t=I_0$ and $I_j\neq I_0$ for $t<j\leq s$. By the assertion, each index not in $I_0$ can appear in at most one of $I_{t+1},\cdots,I_s$. Since each set in $I_{t+1},\cdots,I_s$ contains an index not in $I_0$, we deduce that $s-t\leq k-|I_0|$. Since $\psi_1,\cdots,\psi_t$ are distinct and $I_1=\cdots=I_t$, the values $v_1,\cdots,v_t$ must be distinct, and thus $t\leq |I_0|$. It follows that $s\leq k$ as desired.

To prove the assertion, suppose that $i_0\in I_1$, $i_0\in I_2$, and $i_0\notin I_3$ for some $1\leq i_0\leq k$. Since $\Pi(\widetilde{\psi}_1,\widetilde{\psi}_2,\widetilde{\psi}_3)$ has codimension at most $1$ in $\Pi_I$, there exist nonzero constants $c_1,c_2,c_3\in\R$ with $c_1+c_2+c_3=0$, such that
\[ c_1\widetilde{\psi}_1 + c_2\widetilde{\psi}_2 + c_3\widetilde{\psi}_3 = 0. \]
It follows that
\begin{equation}\label{eq:3psitilde} 
c_1\psi_1 + c_2\psi_2 + c_3\psi_3 = c(x_1+\cdots+x_k) 
\end{equation}
for some $c\in\R$. Examining the coefficients of $y_{i_0}$ in the above equation, we obtain
\[ c_3(i_0-v_3) = 0. \]
Since $c_3\neq 0$, we have $i_0 = v_3$, contradicting the fact that $v_3\in I_3$.
\end{proof}

\subsection{Proof of Proposition \ref{prop:codim1}}\label{sec:Ck-codim1}

Suppose that there is a subspace $\Pi\subset\{(\ve{x},\ve{y}):\ve{x},\ve{y}\in\R^k\}$ of codimension $1$ such that the number of distinct linear forms in $\Psi$ when restricted to $\Pi$ is at most $(k+1)2^{k-2}-1$. Partition $\Psi$ into $m\leq (k+1)2^{k-2}-1$ subsets $\Psi_1,\cdots,\Psi_m$ according to their restrictions to $\Pi$. In other words, the restrictions of $\psi\in\Psi_{j}$ and $\psi'\in\Psi_{j'}$ to $\Pi$ are identical if and only if $j=j'$.

\begin{case}
First suppose that no two forms in the same subset $\Psi_j$ share a common set of variables. Then Lemma \ref{lem:3forms} implies that each $\Psi_j$ contains at most $2$ forms. We may write
\[ \Pi = \{(\ve{x},\ve{y}):\psi_1(\ve{x},\ve{y}) = \psi_2(\ve{x},\ve{y})\} \]
for two distinct forms $\psi_1,\psi_2$ lying in the same $\Psi_j$. We count the number of pairs $(\psi_1',\psi_2')$ with 
\begin{equation}\label{eq:twopairs}
\psi_1'-\psi_2'=c(\psi_1-\psi_2)
\end{equation}
 for some $c\in\R$, and it suffices to show that this number is at most $(k-1)2^{k-2}$. Equivalently, we show that the number of forms not belonging to any pairs is at least $2^{k-1}$. 
We divide into two cases.

If $v(\psi_1)=v(\psi_2)=v$, then the equation $\psi_1=\psi_2$ defining $\Pi$ is of the form
\begin{equation}\label{eq:psi12}  
\sum_{i\in I} \varepsilon_i(i-v)(x_i-y_i) = 0, 
\end{equation}
for some $I\subset [k]\setminus\{v\}$ and $\varepsilon_i\in\{\pm 1\}$. In fact, $I$ is the set of indices lying in exactly one of $I(\psi_1)$ and $I(\psi_2)$.
If \eqref{eq:twopairs} holds, then $v(\psi_1')=v(\psi_2')=v'$, and the equation $\psi_1'=\psi_2'$ is of the form
\begin{equation}\label{eq:psi12'} 
\sum_{i\in I'}\varepsilon_i'(i-v')(x_i-y_i) = 0 
\end{equation}
for some $\varepsilon_i'\in\{\pm 1\}$, where $I'$ is the set of indices lying in exactly one of $I(\psi_1')$ and $I(\psi_2')$. Since \eqref{eq:psi12} and \eqref{eq:psi12'} are the same, we must have $I=I'$ and $v' \notin I$, and moreover either $\varepsilon_i' = \varepsilon_i$ for all $i \in I$ or $\varepsilon_i' = -\varepsilon_i$ for all $i \in I$. Thus for fixed $v' \notin I$, the number of choices for the unordered pair $\{I(\psi_1'),I(\psi_2')\}$ is at most $2^{k-1-|I|}$ (since $v'$ must lie in $I(\psi_1')$ and $I(\psi_2')$). Thus the number of (unordered) pairs $\{\psi_1',\psi_2'\}$ satisfying \eqref{eq:twopairs} is at most 
\[ (k-|I|)2^{k-1-|I|} \leq (k-1)2^{k-2}, \]
as desired.

Now assume that $v(\psi_1)\neq v(\psi_2)$. If \eqref{eq:twopairs} holds, then $v(\psi_1')\neq v(\psi_2')$ as well. In this case if the coefficient of some variable $x_i$ or $y_i$ in $\psi_1$ is nonzero, so is its coefficient in $\psi_1-\psi_2$. The same goes for $\psi_1'-\psi_2'$. Since $\psi_1$ and $\psi_2$ do not involve at least  two variables ($x_{v(\psi_1)}$ or $y_{v(\psi_1)}$ together with $x_{v(\psi_2)}$ or $y_{v(\psi_2)}$), neither $\psi_1'$ nor $\psi_2'$ is allowed to depend on these two variables. There are certainly at least $2^{k-1}$ forms in $\Psi$ involving either of these two variables, and they must appear as singletons in the partition $\Psi_1\cup\cdots\cup\Psi_m$, as desired.
\end{case}

\begin{case}
Now assume that two forms in some subset $\Psi_j$ share a common set of variables. Then $\Pi$ must be of the form
\[ \Pi = \left\{ (\ve{x}, \ve{y}): \sum_{i \in I} x_i + \sum_{i \notin I} y_i = 0 \right\}, \]
for some $I \subset \{1,2,\cdots,k\}$. If two forms $\psi_1, \psi_2$ lie in the same $\Psi_j$, then $\psi_1$ and $\psi_2$ are identical on $\Pi$. Thus they must share the common set of variables $\{x_i : i \in I\}$ and $\{y_i: i \notin I\}$. There are certainly at least $2^{k-1}$ forms in $\Psi$ involving other variables, and they must appear as singletons in the partition $\Psi_1\cup\cdots\cup\Psi_m$, as desired.
\end{case}

\subsection{Proof of Proposition \ref{prop:codim2}}\label{sec:Ck-codim2}
Suppose that there is a subspace $\Pi\subset\{(\ve{x},\ve{y}):\ve{x},\ve{y}\in\R^k\}$ of codimension $2$ such that the number of distinct linear forms in $\Psi$ when restricted to $\Pi$ is at most $2^{k-1}-1$. Partition $\Psi$ into $m\leq 2^{k-1}-1$ subsets $\Psi_1,\cdots,\Psi_m$ according to their restrictions to $\Pi$. In other words, the restrictions of $\psi\in\Psi_{j}$ and $\psi'\in\Psi_{j'}$ to $\Pi$ are identical if and only if $j=j'$.
We divide into two cases.

First suppose that no two forms in the same subset $\Psi_j$ share a common set of variables. Then Lemma \ref{lem:5forms} implies that each $\Psi_j$ contains at most $4$ forms. Since $m\leq 2^{k-1}-1$, this can happen only if $k=3$, in which case $m=3$ and $\Psi_1,\Psi_2,\Psi_3$ all contain exactly $4$ forms. By our assumption, for any choice of $z_i\in\{x_i,y_i\}$ ($i=1,2,3$), the three forms $-z_2-2z_3$, $z_1-z_3$, and $2z_1+z_2$ lie in distinct $\Psi_j$. Since $-z_2-2z_3,z_1-z_3,2z_1+z_2$ form an arithmetic progression with common difference $z_1+z_2+z_3$, it follows that $z_1+z_2+z_3$ restricted to $\Pi$ are identical up to sign for any choice $z_i\in\{x_i,y_i\}$. This contradicts the fact that $\Pi$ has codimension at most $2$.

Now suppose that two forms in some $\Psi_j$ share a common set of variables, so that $\Pi\subset\Pi_I$ for some $I\subset [k]$. For $\psi\in\Psi$ denote by  $\widetilde{\psi}$ its restriction to $\Pi_I$. The number of distinct $\widetilde{\psi}$ as $\psi$ ranges over all forms in $\Psi$ is easily seen to be $k\cdot 2^{k-1}-(k-1)$. By the pigeonhole principle, there must be $k+1$ distinct forms $\widetilde{\psi}_1,\cdots,\widetilde{\psi}_{k+1}$ whose restrictions to $\Pi$ are identical, but this contradicts Lemma \ref{lem:kforms}.

\section{Relative Szemer\'{e}di's theorem for narrow progressions}\label{sec:relative}

To prove the relative Szemer\'{e}di's theorem for narrow progressions (Theorem \ref{thm:relative-szemeredi}), it suffices to prove the following transference principle.

\begin{theorem}[Transference]\label{thm:transference}
Let $k\geq 2$ be a positive integer. Let $N$ be a sufficiently large prime, and let $G = \Z/N\Z$. Let $f,\nu: G\rightarrow\R$ be functions satisfying $0 \leq f\leq \nu$. Let $D, S \geq 2$ be positive integers satisfying $S = o(D)$. Suppose that $\nu$ satisfies the $k$-linear forms conditions with width $S$. Then  there exists a function $\widetilde{f}:G\rightarrow [0,1]$ with $\E f= \E \widetilde{f}+o(1)$, such that
\[ |\Lambda_D(f,\cdots,f)-\Lambda_D(\widetilde{f},\cdots,\widetilde{f})|=o(1). \]
\end{theorem}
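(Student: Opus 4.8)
The plan is to follow the Conlon--Fox--Zhao strategy as adapted to the arithmetic setting by Zhao, but carrying the narrow parameter $D$ and the width-$S$ linear forms conditions throughout. First I would set up the relevant $\sigma$-algebra: using a dual-function / energy-increment argument, produce a factor $\mathcal{B}$ on $G$ generated by boundedly many dual functions $\mathcal{D}_D(g_1,\dots,g_{k-1})$ built from the narrow counting operator $\Lambda_D$ (i.e. functions of the form $n\mapsto \E_{d\in[D]}\prod_{i}g_i(n+(\text{shift})d)$), such that $\widetilde f := \E(f\mid\mathcal B)$ is close to $f$ in the appropriate weak sense. The key structural input is a \emph{dense model} statement: because $0\le f\le\nu$ and $\nu$ satisfies the width-$S$ linear forms conditions, the function $\widetilde f$ takes values in $[0,1]$ (up to $o(1)$ errors), and $\E f=\E\widetilde f+o(1)$. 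This is where the third set of linear forms conditions (the ones with the two-dimensional convex body $\Omega$ and the forms $n+(i-j)d^{(\tau)}$) gets used: they control the $L^2$-type quantities $\E_n \nu(n)\,\big(\E_{d\in[D]}\prod\nu(n+(i-j)d)\big)$ and their relatives that arise when one bounds $\|\E(\nu-1\mid\mathcal B)\|$ or estimates the dual functions pointwise.

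Second, with $\widetilde f$ in hand I would prove the counting/comparison estimate $|\Lambda_D(f,\dots,f)-\Lambda_D(\widetilde f,\dots,\widetilde f)|=o(1)$ by a telescoping argument: write the difference as a sum of $k$ terms, each of the form $\Lambda_D$ with some arguments equal to $f$, one argument equal to $f-\widetilde f$, and the rest equal to $\widetilde f$ (or $1$). Each such term is $o(1)$ provided $f-\widetilde f$ is small in a Gowers-type ``cut" norm adapted to the narrow pattern --- precisely, the norm dual to the family of dual functions generating $\mathcal B$ --- and provided the remaining factors can be replaced by $\nu$-majorized functions without blowing up the estimate. The latter replacement is exactly a \emph{relative generalized von Neumann inequality} for $\Lambda_D$: if one of the $k$ functions has small appropriate box/Gowers norm and the others are bounded by $\nu$, then $\Lambda_D$ of them is $o(1)$. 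Here the first two sets of linear forms conditions (the $2k$-variable ones involving $\psi_j$ and $\psi$, i.e. the $2$-blowups responsible for the factor $2^{k-2}$) enter: after Cauchy--Schwarzing $k-1$ times in the $n$ and $d$ variables one is left with a $2^{k-1}$-fold (or $2^{k-2}$ after symmetry) product of translates of $\nu$, averaged over a convex body in the $2k$-dimensional $(\ve s^{(0)},\ve s^{(1)})$ space whose inradius is a power of $D$; since $S=o(D)$ this body has inradius $\ge S$, so the width-$S$ hypothesis applies and the $\nu$-product averages to $1+o(1)$, leaving a genuine Gowers-norm bound for the distinguished function.

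The main obstacle I expect is precisely this last Cauchy--Schwarz step in the \emph{narrow} setting: in the global case one sums $d$ over all of $G$ and the linear forms in the auxiliary variables are the usual ones, but here $d$ ranges over the short interval $[D]$, so after each Cauchy--Schwarz one must carefully track that the difference variables stay in intervals of length comparable to $D$ and that the resulting linear system in $(\ve s^{(0)},\ve s^{(1)})$ is exactly the one appearing in Definition \ref{def:linear-forms}, with the convex body $\Omega$ having inradius a fixed power of $D$ (hence $\ge S$). Getting the combinatorics of these substitutions right --- so that the forms $\psi_j(s_1,\dots,s_k)=\sum_i(j-i)s_i$ emerge on the nose and the leftover factor is a clean Gowers-type norm of $f-\widetilde f$ --- is the delicate part; once it is done, the dense-model and energy-increment machinery is essentially formal. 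A secondary technical point is that the factor $\mathcal B$ must be built from dual functions of the \emph{narrow} operator so that the same von Neumann inequality controls both the comparison estimate and the ``$\widetilde f\in[0,1]$" claim; I would define it that way from the start to avoid mismatched norms.
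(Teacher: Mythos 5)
Your second step is where the argument breaks down: you invoke a relative generalized von Neumann inequality for $\Lambda_D$ in which one function has small Gowers/box norm and \emph{all the remaining factors are only $\nu$-majorized}. Under the hypotheses of Theorem \ref{thm:transference} this inequality is not available: the width-$S$ conditions of Definition \ref{def:linear-forms} only furnish the single $2$-blowup system in $2k$ variables, the $\psi$-system, and the two-dimensional systems, whereas the classical von Neumann argument with $\nu$-bounded companions requires linear forms conditions for all the intermediate weight systems produced at each Cauchy--Schwarz stage (this is exactly the stronger Green--Tao hypothesis that the Conlon--Fox--Zhao framework, and this paper, are designed to avoid). The paper's counting lemma (Proposition \ref{prop:counting}) circumvents this by \emph{densification}: an induction on the number of indices with $\nu_i\neq 1$, in which one first replaces the $\nu$-bounded companions by $1$-bounded ones --- via the splitting into \eqref{eq:counting1} and \eqref{eq:counting2}, the Gowers--Cauchy--Schwarz Lemma \ref{lem:gcs}, and the $L^2$ estimate of Lemma \ref{lem:nu1'l2} --- and only then uses the closeness $\|f_i-\widetilde f_i\|_{D,i}=o(1)$, with the other factors bounded by $1$, not by $\nu$. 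The same issue infects your first step: building $\mathcal B$ from dual functions of $\Lambda_D$ and setting $\widetilde f=\E(f\mid\mathcal B)$ requires controlling correlations of $\nu$ against products of such dual functions (the Green--Tao ``correlation condition'' territory), which is likewise not among the assumed conditions. The paper instead applies the abstract Green--Tao--Ziegler dense model theorem (Lemma \ref{lem:green-tao-ziegler}) to the family $\mathcal F_S$ of cut-type functions built from the single form $\psi$, where the needed hypotheses reduce to Lemma \ref{lem:nu-1} (which uses only the second set of conditions) together with the approximate closure under products, Lemma \ref{lem:FS-stable}.

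Two further mismatches are worth flagging. First, your attribution of the third set of linear forms conditions to the dense-model step is off: in the paper that set is used in the counting lemma, to control the boundary error when the short variables $d^{(0)},d^{(1)}\in[D]$ are shifted by $s_2+\cdots+s_k$ (the region $[1,D+kS]^2\setminus[kS,D-kS]^2$ of area $\asymp DS$), while the dense-model step rests on the second set. Second, the bridge you correctly identify as delicate --- getting from a single global closeness statement to smallness in every norm $\|\cdot\|_{D,i}$ --- is handled in the paper not inside a Cauchy--Schwarz scheme but by the purely formal discrepancy-pair lemmas (Lemmas \ref{lem:disc-pair-1}, \ref{lem:disc-pair-2}, \ref{lem:disc-pair}), exploiting the $k!$ in the definition of $\psi$ and the hypothesis $S=o(D)$ with an $O(S/D)\,\|f-\widetilde f\|_{L^1}$ error; this is why the theorem can be proved with a single model $\widetilde f$ simultaneously good for all $i$. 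As it stands, your proposal would need either substantially stronger pseudorandomness hypotheses than the theorem assumes, or a replacement of both key steps along the densification/dense-model lines of Propositions \ref{prop:approx-by-bounded} and \ref{prop:counting}.
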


\begin{proof}[Proof of Theorem \ref{thm:relative-szemeredi} assuming Theorem \ref{thm:transference}]
Apply Theorem \ref{thm:transference} to obtain the bounded function $\widetilde{f}$. Since $\E f\geq\delta$ we have $\E\widetilde{f} \geq \delta/2$, and it suffices to show that $\Lambda_D(\widetilde{f},\cdots,\widetilde{f}) \gg_{\delta} 1$. For each $m\in G$, let $\widetilde{f}_m: [D]\rightarrow [0,1]$ be the function defined by $\widetilde{f}_m (n) = \widetilde{f}(m+n)$. Let $M \subset G$ be the set of $m \in G$ with $\E \widetilde{f}_m \geq \delta/4$. From the inequalities
\[ \frac{\delta}{2} \leq \E \widetilde{f} = \E_{m\in G} \E \widetilde{f}_m \leq \frac{\delta}{4} + \frac{|M|}{|G|}, \]
we conclude that $|M| \geq \delta |G|/4$. For each $m\in G$ we apply (the quantitative version of) Szemer\'{e}di's theorem (see for example \cite[Proposition 2.3]{GT08}) after embedding $[D]$ into a cyclic group to obtain
\[ \E_{n,d\in [D]} \widetilde{f}_m(n) \widetilde{f}_m(n+d) \cdots \widetilde{f}_m(n+(k-1)d) \gg_{k,\delta} 1. \]
Here we naturally set $\widetilde{f}_m(n) = 0$ for $n\notin [D]$. Averaging this over all $m\in G$, we arrive at
\[ \E_{m\in G} \E_{n,d\in [D]} \widetilde{f}(m+n) \widetilde{f}(m+n+d) \cdots \widetilde{f}(m+n+(k-1)d) \gg_{k,\delta} 1. \]
This is equivalent to the desired claim $\Lambda_D(\widetilde{f},\cdots,\widetilde{f}) \gg_{\delta} 1$ after a change of variables.
\end{proof}

The proof of Theorem \ref{thm:transference}, motivated by arguments in \cite{CFZ13,Zhao14}, is split into two parts. In the first part, we find a bounded model $\widetilde{f}: G\rightarrow [0,1]$ for $f$ in the sense that $\|f-\widetilde{f}\|_D$ is small, where the norm $\| \cdot \|_D$ is defined as follows.

\begin{definition}
Fix a positive integer $k \geq 2$. For any function $f: G\rightarrow \R$ and any $1 \leq i \leq k$, define
\[ \| f \|_{D,i} = \sup \left| \Lambda_D(f_1,\cdots,f_{i-1},f,f_{i+1},\cdots,f_k) \right|, \]
where the supremum is taken over all functions $f_1,\cdots,f_{i-1},f_{i+1},\cdots,f_k: G\rightarrow [-1,1]$. Furthermore, define
\[ \| f\|_D = \sup_{1 \leq i \leq k} \|f\|_{D,i}. \]
\end{definition}

It can be easily verified that these are indeed norms; however, we will not need this fact. 

\begin{proposition}[Approximation by bounded functions]\label{prop:approx-by-bounded}
Let $k\geq 2$ be a positive integer. Let $N$ be a sufficiently large prime, and let $G = \Z/N\Z$. Let $f,\nu: G\rightarrow\R$ be functions satisfying $0 \leq f\leq \nu$. Let $D, S \geq 2$ be positive integers satisfying $S = o(D)$. Suppose that $\nu$ satisfies the $k$-linear forms conditions with width $S$. Then there exists a function $\widetilde{f}:G\rightarrow [0,1]$ with $\E f= \E \widetilde{f} + o(1)$, such that $\| f - \widetilde{f} \|_D = o(1)$.
\end{proposition}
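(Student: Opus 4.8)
The plan is to run the standard transference/density-increment machinery from \cite{CFZ13,Zhao14}, but with all the hypergraph language translated into the arithmetic setting and with careful attention to the narrow parameter $D$. The key point is that we want to approximate $f$ (in the weak norm $\|\cdot\|_D$) by a bounded function $\widetilde f$ with the same mean. The natural candidate for $\widetilde f$ is obtained by projecting $f$ onto a structured $\sigma$-algebra $\mathcal{B}$ generated by a bounded number of ``dual functions'' and then truncating to $[0,1]$; the dual functions in question are the ones built out of the narrow-progression counting operator $\Lambda_D$, namely $\mathcal{D}_i f(n) = \E_{d \in [D]} \prod_{j \neq i} f_j(n + (j-i)d)$ for $f_j$ bounded by $\nu + 1$ (after the usual symmetrization $\nu \mapsto (\nu+1)/2$, which still satisfies the linear forms conditions up to harmless modifications). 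So: first, I would set up an energy-increment (or Hahn--Banach/regularity) argument producing, for any $\varepsilon > 0$, a decomposition $f = f_{\mathrm{str}} + f_{\mathrm{sml}} + f_{\mathrm{unf}}$ where $f_{\mathrm{str}} = \E(f \mid \mathcal{B})$ is measurable with respect to a $\sigma$-algebra $\mathcal{B}$ generated by $O_\varepsilon(1)$ dual functions of $\nu$-bounded functions, $\|f_{\mathrm{sml}}\|_{L^2(\nu+1)}$ is small, and $\|f_{\mathrm{unf}}\|_D \leq \varepsilon$. Then I set $\widetilde f = \min(\max(f_{\mathrm{str}}, 0), 1)$; this has the same mean as $f$ up to the small and uniform errors, and is bounded by construction.

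The crux is then the \textbf{dense model theorem} / \textbf{relative Cauchy--Schwarz step}: to conclude $\|f - \widetilde f\|_D = o(1)$ one must show (i) that $f_{\mathrm{str}}$ is genuinely close to a $[0,1]$-valued function in a suitable sense — equivalently that $f_{\mathrm{str}} \leq 1 + o(1)$ outside a negligible set — and (ii) that projecting onto $\mathcal{B}$ does not disturb $\Lambda_D$. Both reduce, via several applications of Cauchy--Schwarz in the variables $n$ and $d$, to controlling correlations of $\nu$ (or $\nu+1$) against products of dual functions by the corresponding correlations of the constant function $1$, with error $o(1)$. This is \emph{exactly} where the $k$-linear forms conditions with width $S$ enter: each Cauchy--Schwarz doubling replaces a variable (either one of the ambient shift parameters feeding a dual function, or the progression step $d$) by a pair, and after $k-2$ such doublings on the ``triangle'' structure plus the extra doublings forced by the narrow step $d \in [D]$, the resulting expression is precisely an average of $\nu$ over a product of the linear forms $\psi_j(\mathbf{s}^{(\boldsymbol\omega)})$ (first condition), or $\psi(\mathbf{s}^{(\omega)})$ (second condition), or the one-dimensional narrow forms $(i-j)d^{(\tau)}$ (third condition), over a convex body $\Omega$ whose inradius is controlled below by $D$ — hence at least $S$ by the hypothesis $S = o(D)$, after discarding a negligible ``diagonal'' contribution of relative size $O(S/D) = o(1)$.

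I would organize the Cauchy--Schwarz bookkeeping by first proving a clean \emph{correlation lemma}: for any bounded collection of $1$-bounded functions $g_1, \dots, g_m$, the quantity $\E_n \nu(n) \prod_\ell \mathcal{D}_{i_\ell} g_\ell(n)$ equals $\E_n \prod_\ell \mathcal{D}_{i_\ell} g_\ell(n) + o(1)$, and similarly with an extra $\nu(n+\psi_j(\mathbf{x}))$ factor inside a dual function; this is the narrow analogue of the ``counting lemma'' and will be deferred to Section~\ref{sec:counting-lemma}. Granting that lemma, the present proposition follows by the now-routine argument: run the energy increment to get the decomposition, use the correlation lemma to show $\E f_{\mathrm{str}}^2 \leq \E f_{\mathrm{str}} + o(1)$ (so truncation costs nothing in mean), and use it again to show $|\Lambda_D(f,\dots,f) - \Lambda_D(\widetilde f, \dots, \widetilde f)|$ (equivalently $\|f - \widetilde f\|_D$) is small by expanding multilinearly and replacing each $\nu$-bounded factor by its bounded model one at a time. \textbf{The main obstacle} I anticipate is handling the progression step $d$ under Cauchy--Schwarz: unlike the global case where one can work in a hypergraph on $k$ vertex sets and all doublings are ``symmetric,'' here $d$ ranges over the short interval $[D]$ rather than all of $G$, so the doubled variables $d^{(0)}, d^{(1)}$ live in a two-dimensional convex body and one must verify that (a) the relevant convex bodies genuinely have inradius $\gg D \geq S$, and (b) the off-diagonal terms where doubled variables coincide contribute only $o(1)$ — this is precisely what the third (one-dimensional, width-$S$) linear forms condition is designed to absorb, and threading it correctly through the iterated Cauchy--Schwarz is the delicate part.
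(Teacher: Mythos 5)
Your proposal follows the original Green--Tao route: an energy increment over a $\sigma$-algebra generated by dual functions of the operator $\Lambda_D$ applied to $(\nu+1)$-bounded inputs, followed by truncation. This route has a genuine gap given the hypotheses of Proposition \ref{prop:approx-by-bounded}: it needs pseudorandomness that the $k$-linear forms conditions with width $S$ (Definition \ref{def:linear-forms}) simply do not supply. The energy increment must be run against dual functions $\mathcal{D}_i$ of functions bounded by $\nu+1$ (since $f$ itself is only $\nu$-bounded), and controlling $\E_n (\nu(n)-1)\prod_{\ell\leq m}\mathcal{D}_{i_\ell}F_\ell(n)$ for $\nu$-bounded $F_\ell$ and $m$ growing with $1/\varepsilon$ produces, after expansion, correlation averages in which the number of $\nu$ factors and the multiplicity of each linear form grow without bound; this is exactly the ``correlation condition''/dual-function condition of Green--Tao, and it is not implied by Definition \ref{def:linear-forms}, whose conditions involve a fixed bounded system of forms with exponents in $\{0,1\}$. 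Your own correlation lemma is stated only for $1$-bounded $g_\ell$, which is not enough to drive the increment on $f\leq\nu$ (nor to prove $\E(\nu\mid\mathcal{B})=1+o(1)$, which is what your step ``$f_{\mathrm{str}}\leq 1+o(1)$ outside a negligible set'' requires). A second, related issue is your choice of dual functions built from a single step variable $d\in[D]$: products of such duals do not reduce under Cauchy--Schwarz to the systems appearing in the stated linear forms conditions, so even the weak-hypothesis verification step would not close.

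The paper avoids all of this by not running an energy increment at all. It applies the Green--Tao--Ziegler dense model theorem (Lemma \ref{lem:green-tao-ziegler}) to the family $\mathcal{F}_{S'}$ of test functions built from the single form $\psi(s_1,\cdots,s_k)=k!\sum_i s_i$ of \eqref{eq:psi}; the equal coefficients are what make $\mathcal{F}_S$ essentially closed under pointwise products (Lemma \ref{lem:FS-stable}), so the only input needed about $\nu$ is $|\langle\nu-1,u\rangle|=o(1)$ for $u\in\mathcal{F}_S$, which follows from $k$ Cauchy--Schwarz steps and just the second set of linear forms conditions (Lemma \ref{lem:nu-1}). The resulting single model $\widetilde f$ is then shown to be close to $f$ in every $\|\cdot\|_{D,i}$ by transferring discrepancy with respect to $\psi$ to discrepancy with respect to each $\psi_j$ of \eqref{eq:psi-j} (Lemmas \ref{lem:disc-pair-1} and \ref{lem:disc-pair-2}, using divisibility of the coefficients of $\psi$ by those of $\psi_j$) and then converting discrepancy over width-$S$ boxes into the $\|\cdot\|_{D,i}$ norms using $S=o(D)$ (Lemma \ref{lem:disc-pair}). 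If you want to salvage your approach, you would have to either strengthen the hypotheses to include correlation conditions, or replace your dual functions by a product-closed family such as $\mathcal{F}_S$ --- at which point you have essentially rederived the paper's argument.
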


In the second part of the proof of Theorem \ref{thm:transference}, we show that the $k$-AP counts for the original function $f$ and for its bounded model $\widetilde{f}$ are close.

\begin{proposition}[Counting lemma]\label{prop:counting}
Let $k\geq 2$ be a positive integer. Let $N$ be a sufficiently large prime, and let $G = \Z/N\Z$. Let $D, S \geq 2$ be positive integers satisfying $S = o(D)$. Let $\nu: G\rightarrow\R$ be a function satisfying the $k$-linear forms conditions with width $S$. For $1\leq i\leq k$, let $f_i,\widetilde{f}_i,\nu_i:G\rightarrow \R$ be functions with $\nu_i\in\{\nu,1\}$, $0 \leq f_i \leq \nu_i$ and $ 0 \leq \widetilde{f}_i \leq 1$. If $\| f_i - \widetilde{f}_i \|_{D,i} = o(1)$ for each $1 \leq i \leq k$, then 
\[ \left| \Lambda_D(f_1,\cdots,f_k)-\Lambda_D(\widetilde{f}_1,\cdots,\widetilde{f}_k) \right| = o(1). \]
\end{proposition}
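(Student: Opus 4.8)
The plan is to pass from $\Lambda_D(f_1,\dots,f_k)$ to $\Lambda_D(\widetilde f_1,\dots,\widetilde f_k)$ one coordinate at a time, using a standard telescoping argument. Write $\Lambda_D$ as an average over $n\in G$ and $d\in[D]$ of a product of the $k$ factors, and for $0\le i\le k$ let $g^{(i)}$ be the hybrid tuple $(\widetilde f_1,\dots,\widetilde f_i,f_{i+1},\dots,f_k)$. Then $\Lambda_D(f_1,\dots,f_k)-\Lambda_D(\widetilde f_1,\dots,\widetilde f_k)=\sum_{i=1}^k\big(\Lambda_D(g^{(i-1)})-\Lambda_D(g^{(i)})\big)$, and the $i$th difference equals $\Lambda_D(\dots,\widetilde f_{i-1},f_i-\widetilde f_i,f_{i+1},\dots,f_k)$ with $i-1$ bounded factors $\widetilde f_1,\dots,\widetilde f_{i-1}\in[0,1]$ in front and $k-i$ factors $f_{i+1},\dots,f_k$ (each bounded by $\nu_j\in\{\nu,1\}$) after. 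So it suffices to bound, for each fixed $i$, a quantity of the form $\Lambda_D(h_1,\dots,h_{i-1},f_i-\widetilde f_i,h_{i+1},\dots,h_k)$ where $h_1,\dots,h_{i-1}$ take values in $[-1,1]$ and $h_{i+1},\dots,h_k$ satisfy $|h_j|\le\nu_j$.

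The core step is thus: if $0\le f_j\le\nu_j$ with $\nu_j\in\{\nu,1\}$ for $j\neq i$, and $\nu$ satisfies the $k$-linear forms conditions with width $S=o(D)$, then replacing each of the ``heavy'' factors $h_{i+1},\dots,h_k$ by the constant function $1$ changes the relevant average by $o(1)$ — more precisely, I want to show that for any functions $h_j$ with $|h_j|\le\nu_j$,
\[
\Lambda_D(h_1,\dots,h_{i-1},\phi,h_{i+1},\dots,h_k)=O\big(\|\phi\|_{D,i}\big)+o(1)
\]
uniformly over $|h_1|,\dots,|h_{i-1}|\le 1$, where $\phi=f_i-\widetilde f_i$; applying this with the hypothesis $\|f_i-\widetilde f_i\|_{D,i}=o(1)$ then finishes each term of the telescope. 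To prove the displayed bound, I would use the Cauchy–Schwarz / ``dense model'' manoeuvre à la Green–Tao and Conlon–Fox–Zhao: fix the problematic heavy factors and apply Cauchy–Schwarz in the variables $n,d$ repeatedly (once for each heavy factor we wish to strip), each time doubling the corresponding variable and producing a weight that is a product of shifts of $\nu$. After $k-i$ applications of Cauchy–Schwarz (and care with the narrow range $d\in[D]$, which is where the width-$S$ and the second and third linear forms conditions enter, controlling weights coming from doubled $d$-variables over convex bodies of inradius $\gg D\gg S$), we are left with an expression of the form $\|\phi\|_{D,i}$ times a correlation of products of $\nu$'s over an appropriate convex body, which the $k$-linear forms conditions with width $S$ pin down as $1+o(1)$. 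The point is that the linear forms appearing after these Cauchy–Schwarz steps are exactly (up to relabeling) the ones in Definition \ref{def:linear-forms}: the two-blowup linear forms $\psi_j(\ve s^{(\ve\omega)})$ come from doubling the heavy factors, while the auxiliary second and third conditions absorb the degenerate contributions caused by the restricted range of $d$.

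The main obstacle I expect is the narrow range $d\in[D]$: unlike in the global setting, one cannot freely apply Cauchy–Schwarz in $d$ over all of $G$, so each doubling of the $d$-variable creates a sum over a convex body in the $d$-coordinates of inradius comparable to $D$ (not $N$), and one must ensure (i) that these convex bodies satisfy the inradius $\ge S$ and polynomial-size hypotheses required to invoke the linear forms conditions, and (ii) that the error terms — in particular the ``diagonal'' contributions where a doubled variable collapses, e.g. $d^{(0)}=d^{(1)}$ — are genuinely $o(1)$ rather than $O(1)$; this collapse is precisely what the remark after Definition \ref{def:linear-forms} (and Remark \ref{rem:Ck}) warns about, and it is handled by the requirement $S=o(D)$ together with the third linear forms condition, which controls exactly the weight $\nu(n)^e\prod\nu(n+(i-j)d^{(\tau)})^{e(i,\tau)}$ that arises. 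Keeping careful track of which of the three families of linear forms conditions is used at each Cauchy–Schwarz step, and verifying the convex-body hypotheses each time, is the bookkeeping-heavy heart of the argument; the algebraic identity $\psi_j(s_1,\dots,s_k)=\sum_i(j-i)s_i$ is arranged precisely so that the $k$-AP shifts $n+(j-1)d$ become these linear forms after the substitutions used in the Cauchy–Schwarz.
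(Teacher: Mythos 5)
Your telescoping reduction is fine as far as it goes (it is exactly what the paper does in the base case where every $\nu_i=1$), but the ``core step'' you then need --- that
\[
\Lambda_D(h_1,\dots,h_{i-1},\phi,h_{i+1},\dots,h_k)=O\bigl(\|\phi\|_{D,i}\bigr)+o(1)
\]
uniformly over heavy companions $|h_j|\le\nu_j$ --- is asserted rather than proved, and the mechanism you propose for it does not work. Repeated Cauchy--Schwarz in the style of the generalized von Neumann theorem does not leave you with ``$\|\phi\|_{D,i}$ times a correlation of $\nu$'s'': each application doubles variables and places $\phi$ at several paired points with $\nu$-weights, so what comes out is a $\nu$-weighted box/Gowers-type norm of $\phi$, not the norm $\|\phi\|_{D,i}$, whose defining test functions are \emph{bounded by $1$}. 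Converting such a weighted norm into the unweighted cut-type norm would require substantially stronger pseudorandomness hypotheses (the full Green--Tao linear forms and correlation conditions), which is precisely what Definition \ref{def:linear-forms} was designed to avoid; with only these weak conditions the direct stripping argument is circular, because after Cauchy--Schwarz you need to control correlations of $\phi$ against $\nu$-bounded (not $1$-bounded) functions, which is exactly what you do not have.

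The paper circumvents this by the Conlon--Fox--Zhao densification argument, which your proposal is missing: induct on the number of indices with $\nu_i\ne 1$, split off $\Lambda_D(f_1-\widetilde f_1,\widetilde f_2,\dots,\widetilde f_k)$ (controlled by $\|f_1-\widetilde f_1\|_{D,1}$), and for the remaining term apply Cauchy--Schwarz \emph{once} with the weight $\nu$ to reduce to $\E_n(\nu(n)-1)(f_1'-\widetilde f_1')^2=o(1)$ and $\E_n(f_1'-\widetilde f_1')^2=o(1)$, where $f_1'$ is the inner average over $d\in[D]$. The first is handled by the Gowers--Cauchy--Schwarz Lemma \ref{lem:gcs} (first family of linear forms conditions), after shifting the $d$-variables by narrow variables $s_i\in[S]$ with boundary errors controlled by the third family; the second is where the genuinely new idea enters: one shows $\E|\nu_1'-1|^2=o(1)$ (Lemma \ref{lem:nu1'l2}) and, crucially, that the \emph{truncated, bounded} function $\min(f_1',1)$ satisfies $\|\min(f_1',1)-\widetilde f_1'\|_{D,1}=o(1)$ by invoking the induction hypothesis (the counting lemma with one fewer unbounded majorant), and then expands $\E(f_1'-\widetilde f_1')(\min(f_1',1)-\widetilde f_1')$ into four $\Lambda_D$-counts each equal to $\Lambda_D(\widetilde f_1',\widetilde f_2,\dots,\widetilde f_k)+o(1)$ by the induction hypothesis again. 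Without this truncation-plus-induction step your argument has no way to pass from $\nu$-bounded to $1$-bounded companions, so the proposal as written has a genuine gap at its central claim.
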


Clearly Theorem \ref{thm:transference} follows by combining Propositions \ref{prop:approx-by-bounded} and \ref{prop:counting}. The proof of Proposition \ref{prop:approx-by-bounded}, presented in Section \ref{sec:dense-model}, follows closely the proof of \cite[Lemma 3.3]{Zhao14}, using the Green-Tao-Ziegler dense model theorem. The proof of Proposition \ref{prop:counting}, presented in Section \ref{sec:counting-lemma}, follows closely a densification argument in \cite[Section 6]{CFZ13}.

\section{The dense model theorem}\label{sec:dense-model}

In this section we prove Proposition \ref{prop:approx-by-bounded}. The main tool used is the Green-Tao-Ziegler dense model theorem. Indeed, for each $1 \leq i \leq k$ a straightforward application of this dense model theorem produces a function $\widetilde{f}_i$ such that $\| f - \widetilde{f}_i \|_{D,i} = o(1)$. However, some extra efforts are needed to obtain a single model $\widetilde{f}$ that is close to $f$ in the norm $\| \cdot \|_{D,i}$ for every $i$. To achieve this, we define the following stronger notion of closeness (compare with \cite[Definition 3.1]{Zhao14}).

\subsection{Discrepancy pairs}

\begin{definition}[Discrepancy pair]\label{def:disc-pair}
Fix a positive integer $\ell$ and a linear form $\xi: \Z^{\ell}\rightarrow\Z$ in $\ell$ variables. Let $S \geq 2$ be a positive integer and $\varepsilon > 0$ be real. For two functions $f,\widetilde{f}: G\rightarrow\R$, we say that $(f,\widetilde{f})$ is an $\varepsilon$-discrepancy pair with width $S$ with respect to $\xi$, if for all functions $u_1, \cdots, u_{\ell} : G^{\ell+1} \rightarrow [-1,1]$ with $u_i$ not depending on the $(i+1)$th coordinate, we have
\begin{equation}\label{eq:disc-pair} 
\left| \E_{n \in G} \E_{\ve{s} \in [S]^{\ell}} \left( f(n+\xi(\ve{s})) - \widetilde{f}(n+\xi(\ve{s})) \right) \prod_{i=1}^{\ell} u_i(n,\ve{s}) \right| \leq \varepsilon.
\end{equation}
\end{definition}

Note that, if $\ve{s} = (s_1,\cdots,s_{\ell})$, then the value of $u_i(n, \ve{s})$ does not depend on $s_i$. We will be interested in discrepancy pairs with respect to $\psi$ and $\psi_j$ defined in \eqref{eq:psi-j} and \eqref{eq:psi}. Note that $\psi$ is a linear form in $k$ variables, while each $\psi_j$ is a linear form in $k-1$ variables. The following two lemmas imply that discrepancy pairs with respect to $\psi$ are automatically discrepancy pairs with respect to every $\psi_j$.

\begin{lemma}\label{lem:disc-pair-1}
Let $\xi: \Z^{\ell} \rightarrow \Z$ be a linear form in $\ell$ variables, and let $\xi': \Z^{\ell-1} \rightarrow \Z$ be the linear form defined by $\xi'(\ve{s}) = \xi(\ve{s}, 0)$ for any $\ve{s} \in \Z^{\ell-1}$. Let $S \geq 2$ be a positive integer and $\varepsilon > 0$ be real. If $(f,\widetilde{f})$ is an $\varepsilon$-discrepancy pair with width $S$ with respect to $\xi$, then $(f,\widetilde{f})$ is also an $\varepsilon$-discrepancy pair with width $S$ with respect to $\xi'$.
\end{lemma}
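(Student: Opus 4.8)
The plan is to reduce a test average for $\xi'$ (in $\ell-1$ variables) to a test average for $\xi$ (in $\ell$ variables) by simply \emph{inserting a dummy variable}. Concretely, suppose $(f,\widetilde f)$ is an $\varepsilon$-discrepancy pair with width $S$ with respect to $\xi$, and let $u_1,\dots,u_{\ell-1}: G^{\ell} \to [-1,1]$ be given with $u_i$ not depending on the $(i+1)$st coordinate. We must bound
\[
\left| \E_{n \in G} \E_{\ve{s}' \in [S]^{\ell-1}} \left( f(n+\xi'(\ve{s}')) - \widetilde f(n+\xi'(\ve{s}')) \right) \prod_{i=1}^{\ell-1} u_i(n,\ve{s}') \right|.
\]
Since $\xi'(\ve{s}') = \xi(\ve{s}', 0)$, and the summand does not involve a last coordinate $s_\ell$ at all, we may freely average over a new variable $s_\ell \in [S]$: the expression above equals
\[
\left| \E_{n \in G} \E_{\ve{s} = (\ve{s}',s_\ell) \in [S]^{\ell}} \left( f(n+\xi(\ve{s}',0)) - \widetilde f(n+\xi(\ve{s}',0)) \right) \prod_{i=1}^{\ell-1} u_i(n,\ve{s}') \right|.
\]
The remaining task is to massage this into the exact shape of \eqref{eq:disc-pair} for the form $\xi$: I need the argument of $f - \widetilde f$ to be $n + \xi(\ve{s})$ rather than $n + \xi(\ve{s}',0)$, and I need $\ell$ cutoff functions $u_1,\dots,u_\ell: G^{\ell+1}\to[-1,1]$ with $u_i$ independent of the $(i+1)$st coordinate.

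For the cutoff functions, define $\tilde u_i(n,\ve{s}) := u_i(n,\ve{s}')$ for $1 \le i \le \ell-1$ (this ignores $s_\ell$, so in particular $\tilde u_i$ still does not depend on the $(i+1)$st coordinate, as $i+1 \le \ell$), and set $\tilde u_\ell \equiv 1$, which trivially does not depend on its $(\ell+1)$st coordinate. For the argument of $f-\widetilde f$: write $\xi(\ve{s}) = \xi(\ve{s}',0) + c\, s_\ell$ where $c$ is the coefficient of the last variable in $\xi$. Then I make the change of variables $n \mapsto n - c\, s_\ell$ inside the $n$-average (legitimate since $n$ ranges over all of $G$), which turns $n + \xi(\ve{s}',0)$ into $n + \xi(\ve{s})$ and replaces each $\tilde u_i(n,\ve{s})$ by $\tilde u_i(n - c\,s_\ell, \ve{s})$; the latter is still a function $G^{\ell+1}\to[-1,1]$ with $\tilde u_i$ for $i \le \ell-1$ still independent of its $(i+1)$st coordinate (shifting the first coordinate by a multiple of $s_\ell$ affects only the first and last slots, not slot $i+1$). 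After these two cosmetic adjustments the expression is exactly the left-hand side of \eqref{eq:disc-pair} for $\xi$ with the admissible family $(\tilde u_i)$, hence is $\le \varepsilon$. This proves $(f,\widetilde f)$ is an $\varepsilon$-discrepancy pair with width $S$ with respect to $\xi'$.

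I do not anticipate a genuine obstacle here; the lemma is essentially a bookkeeping statement. The only point requiring a little care is checking that the "independence of the $(i+1)$st coordinate" constraint survives both the padding $u_i \mapsto \tilde u_i$ (trivial, since we add a variable \emph{after} all existing ones) and the shift $n \mapsto n - c\,s_\ell$ (fine, since $i+1 \ge 2$ and we only touch coordinates $1$ and $\ell+1$). One might worry about the degenerate case $c = 0$, i.e.\ $\xi$ genuinely not involving its last variable, but then $\xi' = \xi$ up to the trivial relabelling of variables and the statement is immediate. So the proof is a direct unpacking of Definition \ref{def:disc-pair} together with the standard trick of averaging over an inserted dummy variable.
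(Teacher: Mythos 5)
Your proof is correct and takes essentially the same route as the paper: insert a dummy variable $s_\ell \in [S]$, shift $n$ by the corresponding multiple of $s_\ell$, absorb the shift into the cutoff functions (padding with $u_\ell \equiv 1$), and then invoke the discrepancy condition for $\xi$. The only blemish is a sign-convention slip in the shift (under your substitution the cutoffs become $u_i(n + c\,s_\ell, \ve{s}')$ rather than $u_i(n - c\,s_\ell, \ve{s}')$), which is immaterial since either shifted family is still admissible.
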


\begin{proof}
Let $u_1', \cdots, u_{\ell-1}': G^{\ell} \rightarrow [-1,1]$ be arbitrary functions with $u_i'$ not depending on the $(i+1)$th coordinate. By definition, it suffices to show that
\[ \left| \E_{n \in G} \E_{\ve{s} \in [S]^{\ell-1}} \left( f(n+\xi'(\ve{s})) - \widetilde{f}(n+\xi'(\ve{s})) \right) \prod_{i=1}^{\ell-1} u_i'(n,\ve{s}) \right| \leq \varepsilon. \]
Introducing a new variable $s_{\ell} \in [S]$, and note that $\xi(\ve{s}, s_{\ell}) = \xi'(\ve{s}) + as_{\ell}$ for some $a \in \Z$. After translating $n$ by $as_{\ell}$ and averaging over $s_{\ell}$, we may rewrite the average above as
\[ \E_{n \in G} \E_{\ve{s} \in [S]^{\ell-1}} \E_{s_{\ell} \in [S]} \left( f(n+\xi(\ve{s}, s_{\ell})) - \widetilde{f}(n+\xi(\ve{s}, s_{\ell})) \right) \prod_{i=1}^{\ell-1} u_i'(n + as_{\ell},\ve{s}). \]
This can be further rewritten in the form
\[ \E_{n \in G} \E_{\ve{s} = (s_1, \cdots, s_{\ell}) \in [S]^{\ell}} \left( f(n + \xi(\ve{s})) - \widetilde{f}(n + \xi(\ve{s})) \right) \prod_{i=1}^{\ell} u_i(n, \ve{s}), \]
where $u_i$ is defined by $u_i(n, s_1,\cdots,s_{\ell}) = u_i'(n + as_{\ell}, s_1,\cdots,s_{\ell-1})$ for $i \leq \ell-1$, and $u_{\ell}(n, s_1,\cdots,s_{\ell}) = 1$, so that $u_i(n, s_1,\cdots,s_{\ell})$ does not depend on $s_i$ for every $i$. Thus the average above is indeed bounded by $\varepsilon$ since $(f,\widetilde{f})$ is an $\varepsilon$-discrepancy pair with respect to $\xi$.
\end{proof}

\begin{lemma}\label{lem:disc-pair-2}
Let $\xi, \xi': \Z^{\ell} \rightarrow \Z$ be two linear forms in $\ell$ variables defined by
\[ \xi(s_1, \cdots, s_{\ell}) = a_1s_1 + \cdots + a_{\ell}s_{\ell}, \ \ \xi'(s_1, \cdots, s_{\ell}) = a_1's_1 + \cdots + a_{\ell}'s_{\ell}, \]
for some $a_1,\cdots,a_{\ell}, a_1',\cdots,a_{\ell}' \in \Z\setminus\{0\}$ such that $a_i'$ divides $a_i$ and $a_i/a_i'$ divides $Q$ for each $i$, where $Q$ is a positive integer.  Let $S \geq 2$ be a positive integer and $\varepsilon > 0$ be real. If $(f,\widetilde{f})$ is an $\varepsilon$-discrepancy pair with width $S$ with respect to $\xi$, then $(f,\widetilde{f})$ is also an $\varepsilon$-discrepancy pair with width $QS$ with respect to $\xi'$.
\end{lemma}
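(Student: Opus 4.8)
The plan is to reduce the desired width-$QS$ discrepancy bound for $\xi'$ directly to the assumed width-$S$ discrepancy bound for $\xi$ (in the spirit of the proof of Lemma~\ref{lem:disc-pair-1}), by slicing the range $[QS]$ of each variable into length-$S$ arithmetic progressions whose common differences are exactly the coefficient ratios $a_i/a_i'$.

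First I would set $q_i = a_i/a_i'$, a nonzero integer with $|q_i|\mid Q$, and record the elementary fact that, since $|q_i|S\mid QS$, the interval $[QS]$ decomposes into $Q$ arithmetic progressions of common difference $|q_i|$ and length $S$. Accordingly, each $t_i\in[QS]$ can be written as $t_i = t_i(c_i,j_i)$, where $c_i$ ranges over a set of size $Q$ (encoding the residue class of $t_i$ modulo $|q_i|$ together with the position of the relevant length-$S$ block within that residue class), $j_i\in[S]$ records the position inside the block, and $(c_i,j_i)\mapsto t_i$ is a bijection onto $[QS]$. A short computation — traversing each progression in the direction dictated by the sign of $q_i$ — should then give the key identity $a_i' t_i = A_i(c_i) + a_i j_i$ for an integer-valued affine function $A_i$. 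Summing over $i$, this yields $\xi'(\ve{t}) = A(\ve{c}) + \xi(\ve{j})$ with $A(\ve{c}) = \sum_i A_i(c_i)$ and $\ve{j}\in[S]^{\ell}$.

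With this decomposition in hand, I would take arbitrary admissible test functions $u_1',\dots,u_{\ell}':G^{\ell+1}\to[-1,1]$ for the $\xi'$-side (so $u_i'$ is independent of its $(i+1)$th coordinate $t_i$), rewrite the average in \eqref{eq:disc-pair} for $\xi'$ and width $QS$ using the bijection $\ve{t}\leftrightarrow(\ve{c},\ve{j})$ as an average over $\ve{c}$ and $\ve{j}$, and for each fixed $\ve{c}$ translate the base variable $n$ by $A(\ve{c})$ (a bijection of $G$). This turns the inner average over $n$ and $\ve{j}$ into exactly the quantity appearing in the definition of a width-$S$ discrepancy pair for $\xi$, with test functions $v_i(n,\ve{j}) = u_i'\bigl(n - A(\ve{c}),\,\ve{t}(\ve{c},\ve{j})\bigr)$. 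The crucial verification is that each $v_i$ remains admissible: it takes values in $[-1,1]$ since $u_i'$ does, and it does not depend on $j_i$, because $u_i'$ ignores its $(i+1)$th coordinate $t_i$ while $t_{i'}$ is built only from $j_{i'}$. Applying the hypothesis then bounds every inner average by $\varepsilon$, and averaging over $\ve{c}$ gives the claim.

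The main point requiring care — and, I expect, the only real obstacle — is bookkeeping rather than ideas: one must perform the slicing simultaneously in all coordinates using a single common $Q$ (doing it one coordinate at a time would only produce width $\bigl(\prod_i|q_i|\bigr)S$, which in general exceeds $QS$), and one must orient each length-$S$ progression according to the sign of $q_i$ so that the coefficient of $j_i$ in $a_i' t_i$ comes out to be exactly $+a_i$, making the forms $\xi'(\ve{t})$ and $\xi(\ve{j})$ agree on the nose.
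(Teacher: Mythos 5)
Your proposal is correct and follows essentially the same route as the paper: both slice $[QS]$ in each coordinate into $Q$ arithmetic progressions of length $S$ and common difference $|a_i/a_i'|$, reparametrize so that $\xi'$ on each product of blocks becomes $\xi$ plus a constant, absorb that constant into a shift of $n$, and check that the shifted test functions remain admissible (still independent of the $i$th progression variable) before applying the width-$S$ hypothesis and averaging over block choices. Your explicit handling of the sign of $a_i/a_i'$ by orienting each block is a small bookkeeping point the paper leaves implicit, but the argument is the same.
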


\begin{proof}
Let $u_1', \cdots, u_{\ell}' : G^{\ell+1} \rightarrow [-1,1]$ be arbitrary functions with $u_i'$ not depending on the $(i+1)$-th coordinate. Write $S' = QS$. By definition, it suffices to show that
\[ \left| \E_{n \in G} \E_{\ve{s} \in [S']^{\ell}} \left( f(n+\xi'(\ve{s})) - \widetilde{f}(n+\xi'(\ve{s})) \right) \prod_{i=1}^{\ell} u_i'(n,\ve{s}) \right| \leq \varepsilon. \]
For each $i$ let $q_i = |a_i/a_i'|$. Since $q_i$ divides $Q$, we may split the interval $[S']$ into $Q$ arithmetic progressions $P_{i1}, \cdots, P_{iQ}$, each of which has length $S$ and common difference $q_i$. It suffices to show that, for any choice of $P_i \in \{P_{i1}, \cdots, P_{iQ}\}$ we always have
\[ \left| \E_{n \in G} \E_{\ve{s} \in P_1 \times \cdots \times P_{\ell}} \left( f(n+\xi'(\ve{s})) - \widetilde{f}(n+\xi'(\ve{s})) \right) \prod_{i=1}^{\ell} u_i'(n,\ve{s}) \right| \leq \varepsilon. \]
Write $\ve{s} = (s_1, \cdots, s_{\ell})$. For $s_i \in P_i$, make the change of variable $s_i = q_i t_i + r_i$ with $t_i \in [S]$. Since $\xi'(\ve{s}) = \xi(\ve{t}) + r$ where $\ve{t} = (t_1,\cdots,t_{\ell})$ and $r = a_1'r_1 + \cdots + a_{\ell}'r_{\ell}$, the inequality above is equivalent to
\[ \left| \E_{n \in G} \E_{\ve{t} \in [S]^{\ell}} \left( f(n+\xi(\ve{t})) - \widetilde{f}(n+\xi(\ve{t})) \right) \prod_{i=1}^{\ell} u_i(n,\ve{t}) \right| \leq \varepsilon, \]
where $u_i(n,t_1,\cdots,t_{\ell}) = u_i'(n-r, q_1t_1+r_1, \cdots, q_{\ell}t_{\ell}+r_{\ell})$. This follows from the assumption that $(f, \widetilde{f})$ is an $\varepsilon$-discrepancy pair with width $S$ with respect to $\xi$, since $u_i$ does not depend on $t_i$.
\end{proof}

\begin{lemma}\label{lem:disc-pair}
Let $S, D\geq 2$ be positive integers with $S = o(D)$. If $(f,\widetilde{f})$ is an $o(1)$-discrepancy pair with width $S$ with respect to $\psi_i$ for some $1 \leq i\leq k$, and moreover $\|f - \widetilde{f} \|_{L^1} = O(1)$, then $\|f - \widetilde{f}\|_{D,i} = o(1)$.
\end{lemma}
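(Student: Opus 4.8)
The plan is to unwind the definition of $\|\cdot\|_{D,i}$, perform a smoothing manoeuvre that trades the single narrow variable $d$ for a $(k-1)$-fold average over a small box, and then apply a linear change of variables that puts the resulting average into exactly the shape of the discrepancy-pair estimate for $\psi_i$. First I would fix arbitrary bounded functions $f_1,\dots,f_{i-1},f_{i+1},\dots,f_k:G\to[-1,1]$ and write $g=f-\widetilde f$. Substituting $m=n+(i-1)d$ in the definition of $\Lambda_D$, the quantity $\Lambda_D(f_1,\dots,f_{i-1},g,f_{i+1},\dots,f_k)$ becomes
\[ \E_{d\in[D]}F(d),\qquad F(d):=\E_{m\in G}\,g(m)\prod_{\ell\ne i}f_\ell\bigl(m+(\ell-i)d\bigr), \]
and since $|f_\ell|\le1$ we have $|F(d)|\le\|g\|_{L^1}=O(1)$ for every $d$. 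It therefore suffices to show $\bigl|\E_{d\in[D]}F(d)\bigr|=o(1)$, with the $o(1)$ uniform over the choice of the $f_\ell$.

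The second step is the smoothing, which is where the hypothesis $S=o(D)$ is used. For $\ve{s}=(s_\ell)_{\ell\ne i}\in[S]^{[k]\setminus\{i\}}$ put $T(\ve{s})=\sum_{\ell\ne i}s_\ell$, so $1\le T(\ve{s})\le(k-1)S$. Since $F$ is $N$-periodic and $\sup|F|\le\|g\|_{L^1}$, replacing the interval $[D]$ by its translate $T(\ve{s})+[D]$ changes the average by at most $O\!\bigl(T(\ve{s})/D\bigr)\|g\|_{L^1}=O(S/D)\cdot O(1)=o(1)$, uniformly in $\ve{s}$. Averaging over $\ve{s}$ gives
\[ \E_{d\in[D]}F(d)=\E_{\ve{s}\in[S]^{[k]\setminus\{i\}}}\ \E_{d\in[D]}\,F\bigl(d+T(\ve{s})\bigr)+o(1), \]
so it remains to bound the main term on the right.

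For the main term I would, for each fixed $d$ and $\ve{s}$, make the bijective change of variables $m=n+\psi_i(\ve{s})$ on $G$, where $\psi_i$ is the form in \eqref{eq:psi-j}, regarded as a linear form in the $k-1$ variables $(s_\ell)_{\ell\ne i}$ since its $s_i$-coefficient vanishes. A direct computation gives
\[ m+(\ell-i)\bigl(d+T(\ve{s})\bigr)=n+(\ell-i)d+\phi_\ell(\ve{s}),\qquad \phi_\ell(\ve{s}):=\psi_i(\ve{s})+(\ell-i)T(\ve{s})=\sum_{\ell'\ne i}(\ell-\ell')\,s_{\ell'}, \]
and the crucial point is that the $s_\ell$-coefficient of $\phi_\ell$ equals $\ell-\ell=0$. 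Consequently, for each fixed $d$, the functions $u_\ell(n,\ve{s}):=f_\ell\bigl(n+(\ell-i)d+\phi_\ell(\ve{s})\bigr)$ take values in $[-1,1]$ and do not depend on $s_\ell$, so that
\[ \E_{\ve{s}}\E_{d\in[D]}F\bigl(d+T(\ve{s})\bigr)=\E_{d\in[D]}\Bigl[\,\E_{n\in G}\E_{\ve{s}\in[S]^{[k]\setminus\{i\}}}\,g\bigl(n+\psi_i(\ve{s})\bigr)\prod_{\ell\ne i}u_\ell(n,\ve{s})\Bigr]. \]
Now the hypothesis that $(f,\widetilde f)$ is an $o(1)$-discrepancy pair with width $S$ with respect to $\psi_i$ (Definition \ref{def:disc-pair}, with the $k-1$ variables $(s_\ell)_{\ell\ne i}$ identified order-preservingly with $[k-1]$) bounds the bracketed expression by $o(1)$ uniformly in $d$; averaging over $d\in[D]$ keeps it $o(1)$. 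Hence $\bigl|\Lambda_D(f_1,\dots,g,\dots,f_k)\bigr|=o(1)$, and taking the supremum over the $f_\ell$ yields $\|f-\widetilde f\|_{D,i}=o(1)$.

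The main obstacle is the smoothing step: because $d$ ranges only over the short interval $[D]$ rather than all of $G$, one cannot perform a genuine multidimensional change of variables directly, and it is precisely the combination of $S=o(D)$ with the a priori bound $\|f-\widetilde f\|_{L^1}=O(1)$ that lets the single $d$-average be replaced by a $(k-1)$-dimensional box average at the cost of $o(1)$; once that is done, the telescoping identity $\psi_j-\psi_{j-1}=\sum_\ell s_\ell$ makes $\psi_i$ emerge with exactly the ``missing variable'' structure demanded by Definition \ref{def:disc-pair}. The only other thing to be careful about is bookkeeping: matching the relabeling of the variables $(s_\ell)_{\ell\ne i}$ to the coordinate-indexing convention in the discrepancy-pair definition, and checking that the interval-shift error really is uniform in $\ve{s}$.
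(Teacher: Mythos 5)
Your proof is correct and follows essentially the same route as the paper's: reduce to bounding $\Lambda_D$ with $f-\widetilde f$ in slot $i$, shift the narrow $d$-average by $s_2+\cdots+s_k$ at a cost of $O(S/D)\,\|f-\widetilde f\|_{L^1}=o(1)$, then translate $n$ by $\psi_i(\ve{s})$ so that, for each fixed $d$ (playing the role of the missing variable $s_i$), the expression matches the discrepancy-pair estimate with $u_\ell$ independent of $s_\ell$. The only difference is cosmetic: the paper normalizes to $i=1$, while you treat general $i$ directly.
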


\begin{proof}
Without loss of generality, we may assume that $i=1$, so that $\psi_1$ is a linear form in the $k-1$ variables $s_2,\cdots,s_k$ defined in \eqref{eq:psi-j}. Let $f_2,\cdots,f_k: G \rightarrow [-1,1]$ be arbitrary functions. Fix an arbitrary $s_1 \in \Z$. For $2 \leq i \leq k$, define the function $u_i: G^k \rightarrow [-1,1]$ by
\[ u_i(n,s_2\cdots,s_k) = f_i(n+\psi_i(s_1,\cdots,s_k)). \]
Note that $u_i$ does not depend on $s_i$. Since $(f,\widetilde{f})$ is an $o(1)$-discrepancy pair with width $S$ with respect to $\psi_1$, we have
\begin{equation}\label{eq:disc-pair-lemma} 
\left| \E_{n \in G} \E_{s_2,\cdots,s_k \in [S]} \left( f(n+\psi_1(\ve{s})) - \widetilde{f}(n+\psi_1(\ve{s})) \right) \prod_{i=2}^k f_i(n + \psi_i(\ve{s})) \right| = o(1),
\end{equation}
for every $s_1 \in \Z$, where $\ve{s} = (s_1,\cdots,s_k)$. Note that
\[ \Lambda_D(f-\widetilde{f}, f_2,\cdots,f_k) = \E_{n \in G} \E_{d \in [D]} \left( f(n) - \widetilde{f}(n) \right)  \prod_{i=2}^k f_i(n + (i-1)d). \]
Introduce new variables $s_2,\cdots,s_k$ taking values in $[S]$. Shifting $d$ by $s_2+\cdots+s_k$ causes an error bounded by
\[ O \left( \frac{S}{D} \E_{n \in G} \left| f(n) - \widetilde{f}(n) \right| \right) = o\left( \|f-\widetilde{f}\|_{L^1} \right) = o(1) \]
by hypothesis. Thus
\[ \Lambda_D(f-\widetilde{f}, f_2,\cdots,f_k) =   \E_{d \in [D]} \E_{n \in G} \E_{s_2,\cdots,s_k \in [S]} \left( f(n) - \widetilde{f}(n) \right)  \prod_{i=2}^k f_i(n + (i-1)(d+s_2+\cdots+s_k)) + o(1). \]
After renaming $d$ by $s_1$ and replacing $n$ by $n + \psi_1(\ve{s})$, we may transform this into 
\[ \E_{s_1 \in [D]} \E_{n \in G} \E_{s_2,\cdots,s_k \in [S]} \left( f(n + \psi_1(\ve{s})) - \widetilde{f}(n + \psi_1(\ve{s})) \right) \prod_{i=2}^k f_i(n + \psi_i(\ve{s}) ) + o(1). \]
By \eqref{eq:disc-pair-lemma}, for each $s_1$ the inner average above is $o(1)$. This completes the proof.
\end{proof}

\subsection{Proof of Proposition~\ref{prop:approx-by-bounded}}

For a positive integer $S \geq 2$, let $\mathcal{F}_S$ be the collection of all functions that are convex combinations of functions $u:G\rightarrow\R$ of the form
\begin{equation}\label{eq:u} 
u(n)=\E_{\ve{s} \in [S]^k} \prod_{i=1}^k u_i(n - \psi(\ve{s}), \ve{s})
\end{equation}
for some $u_1,\cdots,u_k:G^{k+1} \rightarrow [-1,1]$ with $u_i(n,\ve{s})$ not depending on $s_i$, where $\psi$ is defined in \eqref{eq:psi}. In view of Lemma \ref{lem:disc-pair}, to prove Proposition \ref{prop:approx-by-bounded} it suffices to find $\widetilde{f}$ such that $(f,\widetilde{f})$ forms an $o(1)$-discrepancy pair with width $o(D)$ with respect to each $\psi_i$. By Lemmas~\ref{lem:disc-pair-1} and~\ref{lem:disc-pair-2}, it suffices to find $\widetilde{f}$ such that $(f,\widetilde{f})$ forms an $o(1)$-discrepancy pair with width $o(D)$ with respect to $\psi$. In other words, we need to ensure that
\[ \left| \langle f - \widetilde{f}, u \rangle \right| = o(1) \]
for any $u \in \mathcal{F}_{o(D)}$. This will be achieved by the Green-Tao-Ziegler dense model theorem \cite{GT08,TZ08} (with simplified proofs in \cite{Gow10, RTTV08}).

\begin{lemma}[Green-Tao-Ziegler dense model theorem]\label{lem:green-tao-ziegler}
For any $\varepsilon > 0$, there is a positive integer $K = K(\varepsilon)$ and a positive constant $\varepsilon' = \varepsilon'(\varepsilon)$ such that the following statement holds. Let $\mathcal{F}$ be an arbitrary collection of functions $u: X \rightarrow [-1,1]$ on a finite set $X$. Let $\nu: X \rightarrow \R_{\geq 0}$ be a function satisfying
\[ \left| \langle \nu-1, u \rangle \right| \leq \varepsilon' \]
for all $u \in \mathcal{F}^K$, where $\mathcal{F}^K$ consists of all functions of the form $u_1u_2 \cdots u_K$ with each $u_i \in \mathcal{F}$. For any function $f: X \rightarrow \R_{\geq 0}$ with $f \leq \nu$ and $\E f \leq 1$, there is a function $\widetilde{f}: X \rightarrow [0,1]$ with the properties that $\E \widetilde{f} = \E f$, and moreover
\[ \left| \langle f - \widetilde{f}, u \rangle \right| \leq \varepsilon \]
for all $u \in \mathcal{F}$.
\end{lemma}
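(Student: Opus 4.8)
This is the Green--Tao--Ziegler dense model theorem, and I would follow the Hahn--Banach / minimax proof of Gowers \cite{Gow10} (equivalently the regularity-free argument of Reingold--Trevisan--Tulsiani--Vadhan \cite{RTTV08}). The plan is to argue by contradiction: assuming no bounded model $\widetilde f$ exists, a minimax argument produces a single test function $u_0$ in the convex hull of $\mathcal F$ together with a threshold $\lambda$ witnessing the failure; truncating at $\lambda$ converts this into a lower bound for the correlation of $\nu-1$ against the ``cut'' function $t\mapsto (t-\lambda)_+$ applied to $u_0$; approximating this cut by a polynomial of bounded degree $K=K(\varepsilon)$ and expanding reduces everything to correlations of $\nu-1$ with products of at most $K$ members of $\mathcal F$, which are $\le\varepsilon'$ by hypothesis, a contradiction once $\varepsilon'$ is small enough in terms of $\varepsilon$. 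We may assume $\mathcal F$ is symmetric and contains the constant function $1$ (both hold automatically for $\mathcal F_S$ in the application, and in general are arranged by enlarging $\mathcal F$, which only replaces $\mathcal F^K$ by $\pm\mathcal F^K$ and does not affect the hypothesis); in particular $|\E\nu - 1|\le\varepsilon'$.

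For the minimax step, let $\mathcal K$ be the compact convex set of $g: X\to[0,1]$ with $\E g=\E f=:\alpha$ (nonempty since $\alpha\in[0,1]$), and suppose for contradiction that for every $g\in\mathcal K$ there is $u\in\mathcal F$ with $\langle f-g,u\rangle>\varepsilon$ (symmetry of $\mathcal F$ lets us drop the absolute value). Then $\min_{g\in\mathcal K}\sup_{u\in\mathcal F}\langle f-g,u\rangle>\varepsilon$; replacing $\mathcal F$ by $\overline{\mathrm{conv}}(\mathcal F)$ leaves this unchanged, and Sion's minimax theorem lets us exchange the $\min$ and $\sup$, producing $u_0\in\overline{\mathrm{conv}}(\mathcal F)$ with $\langle f,u_0\rangle-\max_{g\in\mathcal K}\langle g,u_0\rangle>\varepsilon$. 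A fractional-knapsack computation gives $\max_{g\in\mathcal K}\langle g,u_0\rangle=\min_{\lambda\in\R}\bigl(\lambda\alpha+\E(u_0-\lambda)_+\bigr)$, and since $u_0$ takes values in $[-1,1]$ and $\alpha\in[0,1]$, an optimal $\lambda$ may be taken in $[-1,1]$ (the cases $\alpha\in\{0,1\}$ being trivial). Fixing such a $\lambda$ gives
\[ \langle f,u_0\rangle-\lambda\alpha-\E(u_0-\lambda)_+>\varepsilon. \]

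Now decompose $u_0=\min(u_0,\lambda)+(u_0-\lambda)_+$ pointwise. Since $\min(u_0,\lambda)\le\lambda$ and $f\ge0$ we get $\langle f,\min(u_0,\lambda)\rangle\le\lambda\alpha$, while $0\le f\le\nu$ and $(u_0-\lambda)_+\ge0$ give $\langle f,(u_0-\lambda)_+\rangle\le\langle\nu,(u_0-\lambda)_+\rangle$; hence $\langle f,u_0\rangle\le\lambda\alpha+\langle\nu,(u_0-\lambda)_+\rangle$. Substituting into the displayed inequality yields
\[ \langle\nu-1,(u_0-\lambda)_+\rangle>\varepsilon. \]

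Finally, by the Weierstrass approximation theorem, for a small $\delta=\delta(\varepsilon)>0$ there is $K=K(\varepsilon)$ and, for every $\lambda\in[-1,1]$, a polynomial $P_\lambda$ of degree $\le K$ with $\sup_{|t|\le1}|P_\lambda(t)-(t-\lambda)_+|\le\delta$ whose coefficients have absolute-value sum bounded by some $B=B(\varepsilon)$ uniformly in $\lambda$ (e.g.\ take $P_\lambda(t)=\Phi(t-\lambda)$ for a fixed polynomial $\Phi$ approximating $\max(\cdot,0)$ on $[-2,2]$). Since $u_0(x)\in[-1,1]$, replacing $(u_0-\lambda)_+$ by $P_\lambda(u_0)$ changes $\langle\nu-1,\cdot\rangle$ by at most $\delta(\E\nu+1)\le 3\delta$. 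Writing $u_0=\sum_m\theta_m v_m$ as a finite convex combination of $v_m\in\overline{\mathcal F}$ and expanding $P_\lambda(u_0)=\sum_{j\le K}c_j u_0^j$, each arising term $\langle\nu-1,v_{m_1}\cdots v_{m_j}\rangle$ has absolute value $\le\varepsilon'$ (pad the product with copies of $1$ to land in $\overline{\mathcal F^K}$) and the convex weights sum to $1$, so $|\langle\nu-1,P_\lambda(u_0)\rangle|\le B\varepsilon'$. Altogether $\varepsilon<B\varepsilon'+3\delta$; taking $\delta=\varepsilon/6$ (which fixes $K$ and $B$) and then $\varepsilon'=\varepsilon/(2B)$ forces $\varepsilon<\varepsilon$, the desired contradiction. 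I expect the main obstacle to be the bookkeeping in the minimax/duality step --- ensuring the threshold $\lambda$ is bounded and that the polynomial approximation is uniform in $\lambda$ with controlled coefficients --- rather than any single deep ingredient.
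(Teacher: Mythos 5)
The paper does not prove this lemma at all: it is invoked as a black box from \cite{GT08,TZ08}, with the simplified proofs of Gowers \cite{Gow10} and Reingold--Trevisan--Tulsiani--Vadhan \cite{RTTV08} cited, and your argument is exactly that simplified proof (minimax/LP duality to extract $u_0$ and a threshold $\lambda$, the pointwise decomposition $u_0=\min(u_0,\lambda)+(u_0-\lambda)_+$, polynomial approximation of the cut function, and expansion into products of at most $K$ test functions); I find it correct. The one step worth flagging is the padding: bounding $|\langle\nu-1,v_{m_1}\cdots v_{m_j}\rangle|$ for $j<K$ by the hypothesis genuinely requires $1\in\mathcal{F}$ (and dropping absolute values requires symmetry), both of which you correctly observe hold for the family $\mathcal{F}_S$ used in the application; for a completely general $\mathcal{F}$ the hypothesis should be read as covering products of at most $K$ functions, which is how the cited sources state it.
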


In view of this, the task of proving Proposition~\ref{prop:approx-by-bounded} reduces to proving the following two lemmas, the first of which saying that $\mathcal{F}_S$ is almost closed under pointwise multiplication, and the second of which verifies the hypothesis in Lemma~\ref{lem:green-tao-ziegler} about the majorant $\nu$.

\begin{lemma}\label{lem:FS-stable}
Let $K$ be a positive integer and $\varepsilon \in (0,1)$ be real. Let $S, T \geq 2$ be positive integers with $T \leq \varepsilon S$. For any function $u \in \mathcal{F}_S^K$,  there is a function $v\in \mathcal{F}_{T}$ satisfying $\|v-u\|_{\infty} = O(K \varepsilon)$. 
\end{lemma}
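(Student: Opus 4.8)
The plan is to take $u = u^{(1)} u^{(2)} \cdots u^{(K)}$ with each $u^{(m)} \in \mathcal{F}_S$, first reduce to the case where each $u^{(m)}$ is not merely a convex combination but a single generating function of the form \eqref{eq:u}. Indeed, a product of convex combinations is itself a convex combination of products of the extreme pieces, and $\mathcal{F}_T$ is convex, so it suffices to approximate a single product $u = \prod_{m=1}^K u^{(m)}$ where
\[ u^{(m)}(n) = \E_{\ve{s}^{(m)} \in [S]^k} \prod_{i=1}^k u_i^{(m)}\!\left(n - \psi(\ve{s}^{(m)}), \ve{s}^{(m)}\right). \]
Writing out the product, $u(n)$ becomes an average over $(\ve{s}^{(1)}, \cdots, \ve{s}^{(K)}) \in ([S]^k)^K$ of $\prod_{m,i} u_i^{(m)}(n - \psi(\ve{s}^{(m)}), \ve{s}^{(m)})$. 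The obstruction to this already being an element of $\mathcal{F}_S$ (with a larger number of variables rolled into each $u_i$) is that the shift $n - \psi(\ve{s}^{(m)})$ inside $u_i^{(m)}$ depends on $m$, whereas the definition \eqref{eq:u} requires all $k$ inner functions to be evaluated at the \emph{same} shifted point $n - \psi(\ve{s})$ for a single $\ve{s} \in [S]^k$.

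The key idea to overcome this is to introduce $T$ as the new (smaller) width and to re-express each $\psi(\ve{s}^{(m)})$ in terms of a single block of $k$ variables ranging over $[T]$. Recall $\psi(s_1,\cdots,s_k) = k!\sum_i s_i$, so $\psi(\ve{s}^{(m)})$ depends on $\ve{s}^{(m)}$ only through the sum $\sigma_m = \sum_{i} s_i^{(m)} \in [k][S]$ roughly. The natural move: average, in addition, over a fresh variable $\ve{t} \in [T]^k$ and shift $n$ by $\psi(\ve{t})$; since $T \leq \varepsilon S$, the map $\ve{s}^{(m)} \mapsto \ve{s}^{(m)} + (\text{something involving } \ve{t})$ is an almost measure-preserving bijection on $[S]^k$ up to an error of size $O(T/S) = O(\varepsilon)$ in $L^1$, hence $O(\varepsilon)$ pointwise after the outer average. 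Carrying this out once for each of the $K$ factors (or all at once) produces an error $O(K\varepsilon)$ and rewrites $u$, up to that error, as
\[ v(n) = \E_{\ve{t} \in [T]^k} \prod_{i=1}^k v_i(n - \psi(\ve{t}), \ve{t}), \]
where $v_i(n - \psi(\ve{t}), \ve{t})$ absorbs the entire product over $m$ and all the old variables $\ve{s}^{(m)}$ (now themselves averaged inside $v_i$), with the crucial point that $v_i$ still does not depend on $t_i$: the coefficient of $t_i$ in $\psi$ is the same $k!$ across all $i$, so the dependence on $t_i$ that appears is only through $\psi(\ve{t})$ itself, which has been moved out to the common shift. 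Thus $v \in \mathcal{F}_T$ and $\|v - u\|_\infty = O(K\varepsilon)$.

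The main obstacle I anticipate is bookkeeping the variable-independence condition "$v_i$ does not depend on $t_i$" through the rearrangement: one must be careful that when the old functions $u_i^{(m)}$ (which are independent of $s_i^{(m)}$) get folded into a single $v_i$ together with the averaging over all $\ve{s}^{(m)}$, the residual dependence of $v_i$ on $t_i$ genuinely cancels. This works precisely because $\psi$ has all coefficients equal, so shifting $s_i^{(m)}$ by a multiple of $t_i$ changes $\psi(\ve{s}^{(m)})$ by a quantity proportional to $\psi(\ve{t})$-type terms that are compensated by the outer shift; making this bijection-with-small-error rigorous (the standard "shift and telescope" estimate, cf. the proof of Lemma~\ref{lem:disc-pair-1}) and checking the $O(K\varepsilon)$ accumulates additively rather than multiplicatively is the only real work. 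Everything else is formal manipulation of convex combinations and finite averages.
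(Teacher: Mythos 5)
Your proposal follows the paper's proof in all essentials: reduce by convexity to a single product $u=\prod_{m=1}^K u^{(m)}$ of generating functions, introduce a fresh block $\ve{t}\in[T]^k$ and shift each $\ve{s}^{(m)}$ by $\ve{t}$ at a total cost $O(K\varepsilon)$ (since $T\leq\varepsilon S$ and the $u_i^{(m)}$ are bounded by $1$), and then recognize the shifted expression as lying in $\mathcal{F}_T$. The one step that is not literally correct as you wrote it is the final assembly: the averages over $\ve{s}^{(1)},\cdots,\ve{s}^{(K)}$ cannot be absorbed ``inside $v_i$'', because the average of a product over $i$ is not the product of averages. The paper's (easy) fix is to keep those averages outside: for each \emph{fixed} tuple $\ve{s}^{(1)},\cdots,\ve{s}^{(K)}$ the function
\[ n \mapsto \E_{\ve{t}\in[T]^k}\prod_{i=1}^k v_i(n-\psi(\ve{t}),\ve{t}), \qquad v_i(n,\ve{t})=\prod_{m=1}^K u_i^{(m)}\bigl(n-\psi(\ve{s}^{(m)}),\,\ve{s}^{(m)}+\ve{t}\bigr), \]
is a single generating function of $\mathcal{F}_T$, and $u$ is, up to $O(K\varepsilon)$ in $L^\infty$, the average of these over the tuples, hence a convex combination and therefore in $\mathcal{F}_T$ by definition. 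Note also that $v_i$ is independent of $t_i$ simply because $u_i^{(m)}$ ignores the $i$th coordinate of its second argument, and the first argument's $\ve{t}$-dependence is exactly $-\psi(\ve{t})$ by linearity of $\psi$; the equality of the coefficients of $\psi$ is not what is being used here. With this adjustment, which only uses the convexity you already invoked in your opening reduction, your argument coincides with the paper's.
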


\begin{proof}
It suffices to prove this when $u = u_1u_2 \cdots u_K$ and each $u_j: G\rightarrow [-1,1]$ is of the form
\[ u_j(n)=\E_{\ve{s} \in [S]^k} \prod_{i=1}^k u_{ji}(n - \psi(\ve{s}), \ve{s}), \]
for some $u_{ji} : G^{k+1} \rightarrow [-1,1]$ with $u_{ji}(n, \ve{s})$ not depending on $s_i$, since every function in $\mathcal{F}_S^K$ is a convex combination of these functions $u$. We may write
\[ u(n) = \E_{\ve{s}_1, \cdots, \ve{s}_K \in [S]^k} \prod_{j=1}^K \prod_{i=1}^k u_{ji}(n - \psi(\ve{s}_j), \ve{s}_j). \]
Introduce the auxiliary variables $\ve{t} = (t_1,\cdots,t_k) \in [T]^k$, and note that translating each $s_{ji}$ ($1 \leq j \leq K$) by $t_i$ changes the average by $O(K \varepsilon)$. Thus
\[ u(n) = \E_{\ve{s}_1, \cdots, \ve{s}_K \in [S]^k}  \E_{\ve{t} \in [T]^k} \prod_{i=1}^k \prod_{j=1}^K u_{ji}(n - \psi(\ve{s}_j) - \psi(\ve{t}), \ve{s}_j + \ve{t}) + O(K \varepsilon). \]
For fixed $\ve{s}_1, \cdots, \ve{s}_K \in [S]^k$, consider the function $v: G\rightarrow [-1,1]$ defined by
\[ v(n) = \E_{\ve{t} \in [T]^k} \prod_{i=1}^k v_i(n - \psi(\ve{t}), \ve{t}), \]
where $v_i: G^{k+1} \rightarrow [-1,1]$ is defined by
\[ v_i(n, \ve{t}) = \prod_{j=1}^K u_{ji}(n - \psi(\ve{s}_j), \ve{s}_j + \ve{t}). \]
Thus we have approximated $u$ by a convex combination of these functions $v$, up to an error of $O(K\varepsilon)$ in the $L^{\infty}$-norm. Since $v_i(n, \ve{t})$ does not depend on $t_i$, we have $v \in \mathcal{F}_T$. This completes the proof.
\end{proof}

\begin{lemma}\label{lem:nu-1}
Let $S \geq 2$ be a positive integer. If $\nu$ satisfies the $k$-linear forms conditions with width $S$, then 
\[ \left| \langle \nu-1, u \rangle \right| = o(1) \]
for any $u \in \CF_S$.
\end{lemma}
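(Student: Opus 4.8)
The plan is a by now standard Cauchy--Schwarz argument reducing the estimate to the second of the $k$-linear forms conditions. Since $\mathcal{F}_S$ consists of convex combinations of the explicit functions $u$ in \eqref{eq:u} and $u\mapsto\langle\nu-1,u\rangle$ is linear, it suffices to prove $\langle\nu-1,u\rangle=o(1)$, uniformly in the data, for a single
\[ u(n)=\E_{\ve{s}\in[S]^k}\prod_{i=1}^k u_i(n-\psi(\ve{s}),\ve{s}), \qquad u_i:G^{k+1}\to[-1,1], \]
with $u_i(n,\ve{s})$ independent of $s_i$. For each fixed $\ve{s}$, substituting $n=m+\psi(\ve{s})$ and using translation invariance of the average over $G$, we rewrite
\[ \langle\nu-1,u\rangle=\E_{m\in G}\E_{\ve{s}\in[S]^k}\bigl(\nu(m+\psi(\ve{s}))-1\bigr)\prod_{i=1}^k u_i(m,\ve{s}), \]
so that each factor $u_i(m,\ve{s})$ is bounded by $1$ and independent of $s_i$.

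I would then apply the Cauchy--Schwarz inequality $k$ times, once in each variable $s_1,\dots,s_k$. Each application doubles the variable currently being treated and, since $u_i$ does not involve $s_i$, eliminates the factor $u_i$ at the $i$th stage (the remaining $u_j$'s, having absolute value at most $1$, are simply carried along). This ``Cauchy--Schwarz complexity'' manoeuvre produces, after the $k$ steps,
\[ \bigl|\langle\nu-1,u\rangle\bigr|^{2^k}\ \le\ \E_{m\in G}\,\E_{(\ve{s}^{(0)},\ve{s}^{(1)})\in[S]^{2k}}\,\prod_{\omega\in\{0,1\}^k}\bigl(\nu(m+\psi(\ve{s}^{(\omega)}))-1\bigr), \]
where $s_i^{(0)},s_i^{(1)}$ denote the two copies of $s_i$ and $\ve{s}^{(\omega)}=(s_1^{(\omega_1)},\dots,s_k^{(\omega_k)})$, following the conventions of Section~\ref{sec:outline}.

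Finally I would expand the product over $\omega\in\{0,1\}^k$ as $\sum_{E\subseteq\{0,1\}^k}(-1)^{2^k-|E|}\prod_{\omega\in E}\nu(m+\psi(\ve{s}^{(\omega)}))$ and, for each subset $E$, apply the second condition of Definition~\ref{def:linear-forms} (with width $S$, the choice $e(\omega)=\mathbf{1}_{\omega\in E}$, and the convex body $\Omega=[S]^{2k}$) to get $\E_{m}\E_{(\ve{s}^{(0)},\ve{s}^{(1)})\in\Omega\cap\Z^{2k}}\prod_{\omega\in E}\nu(m+\psi(\ve{s}^{(\omega)}))=1+o(1)$. Summing over the $2^{2^k}$ subsets $E$ and using $\sum_{E\subseteq\{0,1\}^k}(-1)^{2^k-|E|}=(1-1)^{2^k}=0$, the main terms cancel and only an $o(1)$ error remains; hence $|\langle\nu-1,u\rangle|=o(1)$, and the lemma follows by passing to convex combinations. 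The only point of real substance is the bookkeeping in the iterated Cauchy--Schwarz---tracking which copies of each $s_i$ survive in the final product of $\nu$-factors and confirming that every $u_i$ is indeed removed---together with the routine check that $\Omega=[S]^{2k}$ meets the inradius and containment hypotheses of Definition~\ref{def:linear-forms}; none of this poses a genuine obstacle.
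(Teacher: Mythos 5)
Your argument is correct and is essentially identical to the paper's proof of Lemma \ref{lem:nu-1}: reduce to a single $u$ of the form \eqref{eq:u}, change variables, apply Cauchy--Schwarz once in each $s_i$ to eliminate $u_i$ and arrive at $|\langle\nu-1,u\rangle|^{2^k}\leq \E_{n}\E_{\ve{s}^{(0)},\ve{s}^{(1)}}\prod_{\omega}(\nu(n+\psi(\ve{s}^{(\omega)}))-1)$, then expand and invoke the second set of linear forms conditions so the signed main terms cancel. The only difference is that you spell out the final expansion and the (harmless, up to a constant in the width) inradius check, which the paper leaves implicit.
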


\begin{proof}
It suffices to prove this for $u$ of the form \eqref{eq:u}. We may write
\[ \langle \nu-1, u \rangle = \E_{n \in G} \E_{\ve{s} \in [S]^k} \left( \nu(n+\psi(\ve{s}))-1 \right) \prod_{i=1}^k u_i(n, \ve{s}) \]
after a change of variable, where $u_i(n, \ve{s})$ does not depend on $s_i$. To upper bound this, we will apply Cauchy-Schwarz inequality $k$ times, with respect to the variables $s_i$ in the $i$th step. Since $u_i$ does not depend on $s_i$, the Cauchy-Schwarz step with respect to $s_i$ eliminates the function $u_i$. In the end we arrive at
\[ \left| \langle \nu-1, u \rangle \right|^{2^{k}} \leq \E_{n \in G} \E_{\ve{s}^{(0)},\ve{s}^{(1)} \in [S]^k} \prod_{\omega \in \{0,1\}^k} \left( \nu(n+\psi(\ve{s}^{(\omega)})) - 1 \right). \]
This is $o(1)$ after expanding out the product since $\nu$ satisfies (the second set of) the $k$-linear forms conditions with width $S$.
\end{proof}

\begin{proof}[Proof of Proposition~\ref{prop:approx-by-bounded}]
Let $\varepsilon = \varepsilon(N) > 0$ be a function decaying to zero sufficiently slowly. Since $\nu$ satisfies the $k$-linear forms conditions with width $S$, we have by Lemma~\ref{lem:nu-1}
\[ \left| \langle \nu-1, v \rangle \right| = o(1) \]
for any $v \in \mathcal{F}_S$. Choose a positive integer $S'$ such that $S' = o(D)$ and $S = o(S')$. Let $K = K(\varepsilon)$ and $\varepsilon' = \varepsilon'(\varepsilon)$ be constants from the Green-Tao-Ziegler dense model theorem. For any function $u \in \mathcal{F}_{S'}^K$, we have by Lemma~\ref{lem:FS-stable} an approximation $v \in \mathcal{F}_S$ satisfying 
\[ \| v - u \|_{\infty} = o(K) \leq \varepsilon'/4, \]
provided that $\varepsilon$ decays slowly enough compared to the decay rate in $S = o(S')$. Thus for any $u \in \mathcal{F}_{S'}^K$ we have
\[ | \langle \nu-1, u \rangle | \leq o(1) + \frac{1}{4} \varepsilon' \|\nu-1\|_{L^1} \leq o(1) + \frac{1}{2} \varepsilon' \leq \varepsilon'.  \]
By the Green-Tao-Ziegler dense model theorem, we may find $\widetilde{f}: G\rightarrow [0,1]$ with the properties that $\E \widetilde{f} = \E f$, and moreover
\[ | \langle f-\widetilde{f}, u\rangle | \leq \varepsilon \]
for any $u \in \mathcal{F}_{S'}$. By definition, this implies that $(f,\widetilde{f})$ is an $\varepsilon$-discrepancy pair with width $S'$ with respect to $\psi$. It follows from Lemmas~\ref{lem:disc-pair-1} and~\ref{lem:disc-pair-2} that $(f,\widetilde{f})$ is an $\varepsilon$-discrepancy pair with width $S$ with respect to each $\psi_i$. Since
\[ \|f - \widetilde{f}\|_{L^1} \leq \|f\|_{L^1} + 1 \leq \|\nu\|_{L^1} + 1 = 2+o(1), \]
we may apply Lemma~\ref{lem:disc-pair} to conclude that
\[ \|f - \widetilde{f}\|_{D,i} = o(1) \]
for each $1 \leq i \leq k$. This completes the proof.
\end{proof}

\section{The counting lemma}\label{sec:counting-lemma}

In this section we prove Proposition \ref{prop:counting} by induction on the number of indices $i$ with $\nu_i\neq 1$. Consider first the base case when $\nu_i=1$ for all $i$. Note that
\[ \Lambda_D(f_1,\cdots,f_k)-\Lambda_D(\widetilde{f}_1,\cdots,\widetilde{f}_k) = \sum_{i=1}^k \Lambda_D(\widetilde{f}_1,\cdots,\widetilde{f}_{i-1},f_i-\widetilde{f}_i,f_{i+1},\cdots,f_k). \]
For each $1 \leq i \leq k$, since $\widetilde{f}_1,\cdots,\widetilde{f}_{i-1},f_{i+1},\cdots,f_k$ are all bounded by $1$, the $i$th summand is bounded in absolute value by $\|f_i - \widetilde{f}_i\|_{D,i}$. The conclusion follows immediately.

We now turn to the inductive step. Assume that $\nu_j \neq 1$ for some $1 \leq j \leq k$, and without loss of generality we may assume that $\nu_1 \neq 1$. We split the difference $\Lambda_D(f_1,\cdots,f_k)-\Lambda_D(\widetilde{f}_1,\cdots,\widetilde{f}_k)$ into the sum of
\[ \Lambda_D(f_1 - \widetilde{f}_1,\widetilde{f}_{2},\cdots,\widetilde{f}_k) \]
and
\[ \E_{n\in G} f_1(n) \E_{d\in [D]} \left(\prod_{i=2}^k f_i(n+(i-1)d) - \prod_{i=2}^k \widetilde{f}_i(n+(i-1)d) \right). \]
The first expression is bounded in absolute value by $\|f_1 - \widetilde{f}_1\|_{D,1} = o(1)$ since all $\widetilde{f}_i$ are bounded by $1$. Thus it suffices to show that the second expression is $o(1)$. 

To simplify the notations, define $f_1',\widetilde{f}_1':G\rightarrow\R$ by
\[ f_1'(n)=\E_{d\in [D]} \prod_{i=2}^{k} f_i(n+(i-1)d),\ \ \widetilde{f}_1'(n) = \E_{d\in [D]} \prod_{i=2}^{k} \widetilde{f}_i(n+(i-1)d), \]
and define also $\nu_1':G\rightarrow\R$ similarly by
\[ \nu_1'(n)=\E_{d\in [D]} \prod_{i=2}^{k} \nu_i(n+(i-1)d). \]
Clearly $0 \leq f_1' \leq \nu_1'$ and $0 \leq \widetilde{f}_1' \leq 1$, and our goal is to show that
\[ \E_{n \in G} f_1(n) (f_1'(n) - \widetilde{f}_1'(n)) = o(1). \]
After an application of Cauchy-Schwarz and using $0 \leq f_1 \leq \nu$, the task becomes to show that
\[ \left(\E_{n\in G}\nu(n)\right) \E_{n\in G} \nu(n) (f_1'(n) - \widetilde{f}_1'(n)) ^2 = o(1). \]
Since the average of $\nu$ is $1+o(1)$, it suffices to prove the inequalities
\begin{equation}\label{eq:counting1} 
\E_{n\in G} (\nu(n)-1) (f_1'(n) - \widetilde{f}_1'(n)) ^2  = o(1) 
\end{equation}
and
\begin{equation}\label{eq:counting2} 
\E_{n\in G}  (f_1'(n) - \widetilde{f}_1'(n)) ^2 = o(1). 
\end{equation}

\subsection{Proof of \eqref{eq:counting1}}

Expanding the square and recalling the definitions of $f_1'$ and $\widetilde{f}_1'$, we get four terms of the form
\begin{equation}\label{eq:counting11}  
\E_{n\in G} (\nu(n)-1) \E_{d^{(0)},d^{(1)}\in [D]} \prod_{i=2}^k \prod_{\tau\in\{0,1\}} f_i^{(\tau)}(n+(i-1)d^{(\tau)}) , 
\end{equation}
where $f_i^{(\tau)}\in\{f_i,\widetilde{f}_i\}$. It suffices to show that each term is $o(1)$. Introduce new variables $s_i \in [S]$ for each $i>1$, and translate both $d^{(0)}$ and $d^{(1)}$ by $s_2 + \cdots + s_k$. This causes an error bounded by
\begin{equation}\label{eq:error} 
O\left( D^{-2} \E_{n \in G} (\nu(n)+1) \sum_{(d^{(0)}, d^{(1)}) \in \Omega\cap\Z^2} \prod_{i=2}^k \prod_{\tau\in\{0,1\}} f_i^{(\tau)}(n+(i-1)d^{(\tau)}) \right),
\end{equation}
where $\Omega \subset \R^2$ is the region defined by
\[ \Omega = [1,D+kS]^2 \setminus [kS, D-kS]^2. \]
Note that the area of $\Omega$ is $\asymp DS$. This error is $o(1)$ since $\nu$ satisfies (the third set of) the $k$-linear forms conditions with width $S$. Thus \eqref{eq:counting11} is
\[ \E_{n \in G} (\nu(n)-1) \E_{\ve{s} = (s_2,\cdots,s_k) \in [S]^{k-1}} \E_{d^{(0)},d^{(1)} \in [D]} \prod_{i=2}^k \prod_{\tau\in\{0,1\}} f_i^{(\tau)}(n + (i-1)(d^{(\tau)} + s_2 + \cdots + s_k ) + o(1). \]
Now replace $n$ by $n+\psi_1(s_2,\cdots,s_k)$ to see that \eqref{eq:counting11} is
\[  \E_{n\in G}  \E_{\ve{s} = (s_2,\cdots,s_k) \in [S]^{k-1}} (\nu(n+\psi_1(\ve{s})-1) \E_{d^{(0)},d^{(1)}\in [D]} \prod_{i=2}^k \prod_{\tau\in\{0,1\}} f_i^{(\tau)}(n+\psi_i(d^{(\tau)}, \ve{s})) + o(1). \]
This is $o(1)$ using the $k$-linear forms conditions on $\nu$ after applying Cauchy-Schwarz inequality $k-1$ times with respect to $s_2, \cdots, s_k$. See the following lemma for details.

%We now apply Cauchy-Schwarz inequality $k-1$ times, with respect to $s_2, \cdots, s_k$. Since $f_i^{(\tau)}$ does not depend on $s_i$, the Cauchy-Schwarz step with respect to $s_i$ effectively replaces $f_i^{(\tau)}$ by its majorant $\nu_i^{(\tau)} \in \{\nu, 1\}$, and replaces $s_i$ by two copies $s_i^{(0)}, s_i^{(1)}$. We thus end up with the expression
%\[ \E_{n \in G} \E_{\ve{s}^{(0)}, \ve{s}^{(1)} \in [D] \times [S]^{k-1}} \prod_{\omega \in \{0,1\}^{[k]\setminus\{1\}}} \left( \nu(n + \psi_1(\ve{s}^{(\omega)})) - 1 \right) \prod_{i=2}^k \prod_{\omega \in \{0,1\}^{[k]\setminus \{i\}}} \nu_i^{(\omega_1)}(n + \psi_i(\ve{s}^{(\omega)})). \]
%This is $o(1)$ since $\nu$ satisfies the $k$-linear forms conditions with width $S$.

\begin{lemma}[Gowers-Cauchy-Schwarz]\label{lem:gcs}
Let $S \geq 2$ be real. Let $\nu:G\rightarrow\R_{\geq 0}$ be a function satisfying the $k$-linear forms conditions with width $S$. For $2\leq i\leq k$ and $\tau\in\{0,1\}$, let $\nu_i^{(\tau)}$ be either $\nu$ or $1$ and let $f_i^{(\tau)}:G\rightarrow\R_{\geq 0}$ be a function with $f_i^{(\tau)} \leq \nu_i^{(\tau)}$. Let $S_1,\cdots,S_k \geq S$ be positive integers. For each $1\leq \ell \leq k$, define
\begin{align*} 
I_{\ell} = \E_{n\in G} \E_{\substack{s_1^{(0)},\cdots,s_{\ell}^{(0)}\\ s_1^{(1)},\cdots,s_{\ell}^{(1)}}} \E_{s_{\ell+1},\cdots,s_k} & \prod_{\omega\in\{0,1\}^{[\ell]\setminus\{1\}}} (\nu(n+\psi_1(\ve{s}^{(\omega)})) - 1) \\
& \prod_{i=2}^{\ell} \prod_{\omega\in\{0,1\}^{[\ell]\setminus\{i\}}} \nu_i^{(\omega_1)}(n+\psi_i(\ve{s}^{(\omega)})) \prod_{i=\ell+1}^k \prod_{\omega\in\{0,1\}^{[\ell]}} f_i^{(\omega_1)}(n+\psi_i(\ve{s}^{(\omega)})),
\end{align*}
where the average over $s_i^{(0)},s_i^{(1)}$ (or $s_i$) is understood to be in the range $[S_i]$ ($1\leq i\leq k$).
Then $I_{\ell}=o(1)$ for each $1\leq \ell\leq k$.
\end{lemma}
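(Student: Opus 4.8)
The plan is to prove $I_{\ell}=o(1)$ by downward induction on $\ell$, starting from the base case $\ell=k$ and descending to $\ell=1$ (only the case $\ell=1$ is used elsewhere, but the induction requires all the intermediate statements). The mechanism is that one Cauchy--Schwarz step in the variable $s_{\ell+1}$ converts a bound for $I_{\ell}$ into a bound of the form $|I_\ell|^2\le(1+o(1))I_{\ell+1}$, so that everything ultimately reduces to a direct evaluation of the base case.

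For the base case $\ell=k$, all $k$ variables are doubled and the product over $i=\ell+1,\dots,k$ is empty, so
\[ I_k=\E_{n\in G}\E_{(\ve{s}^{(0)},\ve{s}^{(1)})}\prod_{\omega\in\{0,1\}^{[k]\setminus\{1\}}}\bigl(\nu(n+\psi_1(\ve{s}^{(\omega)}))-1\bigr)\prod_{i=2}^{k}\prod_{\omega\in\{0,1\}^{[k]\setminus\{i\}}}\nu_i^{(\omega_1)}(n+\psi_i(\ve{s}^{(\omega)})). \]
Expanding the first product as $\sum_{T\subseteq\{0,1\}^{[k]\setminus\{1\}}}(-1)^{2^{k-1}-|T|}\prod_{\omega\in T}\nu(n+\psi_1(\ve{s}^{(\omega)}))$ and dropping the $\nu_i$-factors that equal $1$, each resulting summand is the average, over a box $[S_1]^2\times\cdots\times[S_k]^2$, of a product of values of $\nu$ along a sub-collection of the distinct linear forms constituting the system $\Psi$ of Proposition~\ref{thm:rep-index}. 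In the applications the $S_i$ are polynomially comparable to $S$ (each being $S$ or at most $S^{O(1)}$), so this box is an admissible convex body for condition~(1) of Definition~\ref{def:linear-forms}, whence every summand equals $1+o(1)$. Since $\sum_T(-1)^{2^{k-1}-|T|}=(1-1)^{2^{k-1}}=0$, the main terms cancel and $I_k=o(1)$.

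For the inductive step, fix $\ell<k$ and assume $I_{\ell+1}=o(1)$. Since $\psi_{\ell+1}$ has a vanishing coefficient in its $(\ell+1)$-th variable, the factor
\[ H:=\prod_{\omega\in\{0,1\}^{[\ell]}}f_{\ell+1}^{(\omega_1)}\bigl(n+\psi_{\ell+1}(\ve{s}^{(\omega)})\bigr) \]
occurring in $I_{\ell}$ does not depend on $s_{\ell+1}$, whereas every remaining factor---each carrying a $\psi_j$ with $j\le\ell$ or $j\ge\ell+2$, hence with nonzero $(\ell+1)$-th coefficient---does. Letting $\Phi$ denote the average over $s_{\ell+1}\in[S_{\ell+1}]$ of the product of all these $s_{\ell+1}$-dependent factors, we get $I_{\ell}=\E_{n\in G}\E[\Phi H]$, the outer average being over $n$ and over $s_1^{(\cdot)},\dots,s_\ell^{(\cdot)},s_{\ell+2},\dots,s_k$. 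Since $0\le H\le\widetilde{H}:=\prod_{\omega\in\{0,1\}^{[\ell]}}\nu_{\ell+1}^{(\omega_1)}(n+\psi_{\ell+1}(\ve{s}^{(\omega)}))$, applying Cauchy--Schwarz with weight $\widetilde{H}$ gives $|I_{\ell}|\le(\E\,\Phi^2\widetilde{H})^{1/2}(\E\,\widetilde{H})^{1/2}$, where $\E\,\widetilde{H}=1+o(1)$ again by condition~(1) of Definition~\ref{def:linear-forms}. Expanding $\Phi^2$ introduces a second copy $s_{\ell+1}^{(1)}$ of $s_{\ell+1}=s_{\ell+1}^{(0)}$, and one checks termwise that the $(\nu-1)$-product then runs over $\omega\in\{0,1\}^{[\ell+1]\setminus\{1\}}$, the $\nu_i$-products over $\omega\in\{0,1\}^{[\ell+1]\setminus\{i\}}$ for $2\le i\le\ell$, the weight $\widetilde{H}$ is precisely the $i=\ell+1$ term of the $\nu_i$-product, and the $f_i$-products over $\omega\in\{0,1\}^{[\ell+1]}$ for $\ell+2\le i\le k$; in other words $\E\,\Phi^2\widetilde{H}=I_{\ell+1}$. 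Hence $|I_{\ell}|^2\le(1+o(1))\,I_{\ell+1}=o(1)$, which closes the induction.

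The one delicate point is the bookkeeping in the inductive step: under the doubling of $s_{\ell+1}$ one must verify that the index sets $\{0,1\}^{[\ell]\setminus\{j\}}$ and $\{0,1\}^{[\ell]}$ enlarge in exactly the prescribed way, and that the superscripts $\nu_i^{(\omega_1)}$ and $f_i^{(\omega_1)}$---which see only the first coordinate of $\omega$ and so are unaffected by the doubling---align correctly, so that $\E\,\Phi^2\widetilde{H}$ is literally the expression $I_{\ell+1}$ rather than merely something comparable to it. A secondary, routine point is confirming that the boxes used are admissible convex bodies for the $k$-linear forms conditions, which comes down to the $S_i$ being polynomially comparable to $S$, as is the case in every application of the lemma.
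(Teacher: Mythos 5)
Your proof is correct and follows essentially the same route as the paper: the base case $I_k=o(1)$ by expanding the $(\nu-1)$-product and invoking the first set of linear forms conditions, and the inductive step via a single weighted Cauchy--Schwarz in the newly doubled variable, pulling out the $f_{\ell+1}$-factor (which is independent of $s_{\ell+1}$) and majorizing it by the corresponding $\nu_{\ell+1}$-product to get $|I_\ell|^2\le(1+o(1))I_{\ell+1}$, exactly the paper's chain $I_{\ell-1}^2\le(1+o(1))I_\ell$ written in the opposite indexing direction. Your explicit bookkeeping of the index sets and the remark about the side lengths $S_i$ being polynomially comparable (needed for the convex body to be admissible) are details the paper leaves implicit, and they hold in every application of the lemma.
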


Here we adopted the natural convention that $\ve{s}^{(\omega)} = (s_1^{(\omega_1)},\cdots,s_{\ell}^{(\omega_{\ell})},s_{\ell+1},\cdots,s_k)$ for $\omega \in \{0,1\}^{[\ell]}$.

\begin{proof}
First note that $I_k = o(1)$ follows from the $k$-linear forms conditions on $\nu$. Thus it suffices to show that $I_{\ell-1}^2 \leq (1+o(1)) I_{\ell}$ for any $2\leq \ell \leq k$. After pulling out the terms involving $f_{\ell}$ which do not depend on the variable $s_{\ell}$, we can rewrite $I_{\ell-1}$ as
\begin{align*} 
 \E_{n\in G} & \E_{\substack{s_1^{(0)},\cdots,s_{\ell-1}^{(0)} \\ s_1^{(1)},\cdots,s_{\ell-1}^{(1)}}} \E_{s_{\ell+1},\cdots,s_k} \prod_{\omega \in\{0,1\}^{[\ell-1]}} f_{\ell}^{(\omega_1)}(n+\psi_{\ell}(\ve{s}^{(\omega)}))  \E_{s_{\ell}} \prod_{\omega \in \{0,1\}^{[\ell-1]\setminus\{1\}} }(\nu(n+\psi_1(\ve{s}^{(\omega)})) - 1) \\
 &
  \prod_{i=2}^{\ell-1} \prod_{\omega \in \{0,1\}^{[\ell-1]\setminus\{i\}}} \nu_i^{(\omega_1)}(n+\psi_i(\ve{s}^{(\omega)})) \prod_{i=\ell+1}^k \prod_{\omega \in \{0,1\}^{\ell-1}} f_i^{(\omega_1)}(n+\psi_i(\ve{s}^{(\omega)})) , 
\end{align*}
By the Cauchy-Schwarz inequality in the $s_{\ell}$ variable, we see that $I_{\ell-1}^2$ is bounded by the product of
\[ \E_{n\in G}   \E_{\substack{s_1^{(0)},\cdots,s_{\ell-1}^{(0)} \\ s_1^{(1)},\cdots,s_{\ell-1}^{(1)}}} \E_{s_{\ell+1},\cdots,s_k} \prod_{\omega \in\{0,1\}^{[\ell-1]}} \nu_{\ell}^{(\omega_1)}(n+\psi_{\ell}(\ve{s}^{(\omega)})) \]
and another term which, after expanding out the square, becomes exactly $I_{\ell}$. Since the expression above is $1+o(1)$ by the $k$-linear forms conditions on $\nu$, the desired claim follows.
\end{proof}

\subsection{Proof of \eqref{eq:counting2}}

Split the left side of \eqref{eq:counting2}  into the sum of two terms
\begin{equation}\label{eq:counting21} 
\E_{n\in G} (f_1'(n)-\widetilde{f}_1'(n)) ( f_1'(n)-\min(f_1'(n),1))+ \E_{n\in G}  (f_1'(n)-\widetilde{f}_1'(n)) ( \min(f_1'(n),1) - \widetilde{f}_1'(n)). 
\end{equation}
The first term above can be bounded by
\[ \E_{n\in G} (\nu_1'(n)+1)|\nu_1'(n)-1| \leq \E_{n\in G} |\nu_1'(n)-1|^2 + 2 \left( \E_{n\in G} |\nu_1'(n)-1|^2 \right)^{1/2}.  \]
This is $o(1)$ since the $L^2$-norm of $\nu_1'-1$ is $o(1)$ by the following lemma.

\begin{lemma}\label{lem:nu1'l2}
Let $\nu_1'$ be defined as above. Then 
\[ \E_{n\in G} |\nu_1'(n)-1|^2 = o(1). \]
\end{lemma}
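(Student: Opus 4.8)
The plan is to expand the square and reduce everything to the third set of $k$-linear forms conditions on $\nu$. Writing
\[ \E_{n\in G} |\nu_1'(n)-1|^2 = \E_{n\in G}\nu_1'(n)^2 - 2\,\E_{n\in G}\nu_1'(n) + 1, \]
it suffices to prove that $\E_{n\in G}\nu_1'(n) = 1+o(1)$ and $\E_{n\in G}\nu_1'(n)^2 = 1+o(1)$, since then the right-hand side collapses to $o(1)$. Both will follow directly from condition (3) in Definition~\ref{def:linear-forms}, applied with $j=1$ and with $\Omega$ taken to be the square $[1,D]\times[1,D]\subset\R^2$, so that $\Omega\cap\Z^2$ is exactly the summation range $[D]^2$ built into the definition of $\nu_1'$.

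For the linear term, recall $\nu_1'(n)=\E_{d\in[D]}\prod_{i=2}^k\nu_i(n+(i-1)d)$, so that $\E_{n\in G}\nu_1'(n)=\E_{n\in G}\E_{d\in[D]}\prod_{i=2}^k\nu_i(n+(i-1)d)$. In condition (3) I would take $e=0$, $e(i,0)=\mathbf{1}_{\nu_i=\nu}$, and $e(i,1)=0$; then $\nu^{e(i,0)}=\nu_i$ and the $d^{(1)}$-variable drops out of the product entirely, so the left-hand side of condition (3) becomes precisely $\E_{n\in G}\E_{d\in[D]}\prod_{i=2}^k\nu_i(n+(i-1)d)$, which the condition evaluates to $1+o(1)$. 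For the quadratic term, expand $\nu_1'(n)^2=\E_{d^{(0)},d^{(1)}\in[D]}\prod_{i=2}^k\prod_{\tau\in\{0,1\}}\nu_i(n+(i-1)d^{(\tau)})$ and apply condition (3) again with the same $j$ and $\Omega$, $e=0$, but now $e(i,\tau)=\mathbf{1}_{\nu_i=\nu}$ for both $\tau\in\{0,1\}$; this realizes $\E_{n\in G}\nu_1'(n)^2$ as the left-hand side of condition (3), hence as $1+o(1)$. Combining the two estimates gives $\E_{n\in G}|\nu_1'(n)-1|^2=(1+o(1))-2(1+o(1))+1=o(1)$.

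The main point — indeed the only one requiring any care — is verifying that $\Omega=[1,D]^2$ is an admissible convex body for Definition~\ref{def:linear-forms}: its inradius is $(D-1)/2$, which exceeds $S$ once $N$ is large because $S=o(D)$, and $\Omega\subset[-D,D]^2\subset[-r(\Omega)^{O(1)},r(\Omega)^{O(1)}]^2$. Beyond this there is no genuine obstacle: in contrast to the proof of~\eqref{eq:counting1}, here $\Omega\cap\Z^2$ already coincides with the range $[D]^2$ present in the definition of $\nu_1'$, so no auxiliary averaging over extra variables and no annular error term arise, and the estimate is an immediate consequence of the linear forms hypothesis. (When every $\nu_i$ equals $1$ the claim is trivial since $\nu_1'\equiv1$, and the argument above handles this case uniformly, as all the exponents $e(i,\tau)$ then vanish.)
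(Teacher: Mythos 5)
Your proof is correct, but it takes a genuinely different route from the paper for the main estimates. You apply the third set of linear forms conditions directly with the large box $\Omega=[1,D]^2$ (taking $j=1$, $e=0$, and $e(i,\tau)=\mathbf{1}_{\nu_i=\nu}$), which immediately yields $\E_n\nu_1'(n)=1+o(1)$ and $\E_n\nu_1'(n)^2=1+o(1)$; your admissibility check (inradius $(D-1)/2\geq S$ since $S=o(D)$, and containment in $[-r(\Omega)^{O(1)},r(\Omega)^{O(1)}]^2$) is the right one, and your choice of exponents correctly encodes the cases $\nu_i\in\{\nu,1\}$, including the trivial all-ones case. The paper instead proves the quadratic bound by first translating $d^{(0)},d^{(1)}$ by auxiliary variables $s_2+\cdots+s_k\in[S]^{k-1}$, using the third condition only to control the resulting annular boundary error of size $O(S/D)$ (as in \eqref{eq:error}), and then, after the substitution $n\mapsto n-s_2-2s_3-\cdots-(k-1)s_k$, invoking the first set of linear forms conditions for the forms $\psi_i(\ve{s}^{(\tau)})$. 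Your argument is shorter and uses only the two-variable condition (3) at full strength, which is legitimate since that condition is stated for arbitrary convex bodies of inradius at least $S$ with no upper bound on size beyond polynomial containment in the inradius (and, as the paper notes, $L(\Psi)=k-1$ for that system, so the condition is genuinely available at width $S$); the paper's detour mirrors the treatment of \eqref{eq:counting1}, where the shift is unavoidable because of the $(\nu(n)-1)$ factor and the ensuing Gowers--Cauchy--Schwarz argument, but for this lemma it is not needed. The one caveat is interpretive rather than mathematical: the displayed condition (3) is written with a sum over $(d^{(0)},d^{(1)})\in\Omega\cap\Z^2$, which must be read as the normalized average over $\Omega\cap\Z^2$ (otherwise the right-hand side $1+o(1)$ makes no sense, and the paper itself implicitly uses this normalization when bounding the annular error term); under that reading, which you adopt, your proof goes through as stated.
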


\begin{proof}
It suffices to show that
\[ \E_{n\in G} \nu_1'(n)^2 = 1+o(1),\ \ \E_{n\in G}\nu_1'(n) = 1+o(1). \]
We prove only the first bound; the second bound is dealt with similarly and easier. Expanding the square we get
\[ \E_{n\in G} \nu_1'(n)^2 = \E_{n \in G} \E_{d^{(0)},d^{(1)}\in [D]} \prod_{i=2}^k \prod_{\tau\in\{0,1\}} \nu_i(n+(i-1)d^{(\tau)}). \]
For $s_2, \cdots, s_k \in [S]$, we may translate both  $d^{(0)}$ and $d^{(1)}$ by $s_2+\cdots+s_k$ with an error in the form of \eqref{eq:error} (with $f_i^{(\tau)}$ there replaced by $\nu_i$), which is $o(1)$. Thus
\[ \E_{n \in G} \nu_1'(n)^2 = \E_{n \in G} \E_{s_1^{(0)},s_1^{(1)} \in [D]} \E_{s_2,\cdots,s_k \in [S]} \prod_{i=2}^k \prod_{\tau \in \{0,1\}} \nu_i\left(n + (i-1)(s_1^{(\tau)}+s_2+\cdots+s_k) \right) + o(1). \]
After replacing $n$ by $n-s_2-2s_3-\cdots-(k-1)s_k$ we obtain
\[ \E_{n\in G}\nu_1'(n)^2 = \E_{n \in G} \E_{s_1^{(0)},s_1^{(1)}\in [D]} \E_{s_2,\cdots,s_k\in [S]} \prod_{i=2}^k \prod_{\tau\in\{0,1\}} \nu_i(n+\psi_i(\ve{s}^{(\tau)})) + o(1). \]
Since $\nu_i\in \{\nu,1\}$, this is $1+o(1)$ by the $k$-linear forms conditions on $\nu$, completing the proof of the lemma.
\end{proof}

It remains to bound the second sum in \eqref{eq:counting21}. First we claim that $\| \min(f_1',1) - \widetilde{f}_1' \|_{D,1} = o(1)$. To this end, let $u_2,\cdots,u_k:G\rightarrow [-1,1]$ by any functions, and define $u_1':G\rightarrow [-1,1]$ by
\[ u_1'(n)=\E_{d\in [D]} \prod_{i=2}^k u_i(n+(i-1)d). \]
By definition of the norm $\| \cdot \|_{D,1}$, we need to show that
\[ \Lambda_D(\min(f_1',1)-\widetilde{f}_1', u_2,\cdots,u_k) = o(1). \]
The left side above can be written as
\[  \E_{n\in G} \left[\min(f_1'(n),1)-\widetilde{f}_1'(n)\right] u_1'(n) \\
= \E_{n\in G} \left[\min(f_1'(n),1)-f_1'(n)\right] u_1'(n) + \E_{n\in G} (f_1'(n)-\widetilde{f}_1'(n)) u_1'(n).
\]
The first term can be bounded in terms of the $L^2$-norm of $\nu_1'-1$, which is $o(1)$ by Lemma~\ref{lem:nu1'l2}. The second term can be rewritten as
\[ \Lambda_D(u_1', f_2, \cdots, f_k) - \Lambda_D(u_1', \widetilde{f}_2,\cdots, \widetilde{f}_k). \]
This is $o(1)$ by the induction hypothesis (since $u_1'$ is bounded by $1$). This proves the claim.

Going back to the task of bounding the second sum in \eqref{eq:counting21}, we need to show that
\[ \E_{n\in G}  (f_1'(n)-\widetilde{f}_1'(n)) ( \min(f_1'(n),1) - \widetilde{f}_1'(n)) = o(1). \]
Expand it into four terms:
\[ \E_{n \in G} f_1'(n)\min(f_1'(n),1) - \E_{n \in G} f_1'(n)\widetilde{f}_1'(n) - \E_{n\in G} \widetilde{f}_1'(n)\min(f_1'(n),1) + \E_{n \in G}\widetilde{f}_1'(n)^2. \]
These terms can be rewritten as
\[ \Lambda_D(\min(f_1',1),f_2,\cdots,f_k) - \Lambda_D(\widetilde{f}_1', f_2, \cdots, f_k) - \Lambda_D(\min(f_1',1), \widetilde{f}_2,\cdots,\widetilde{f}_k) + \Lambda_D(\widetilde{f}_1',\widetilde{f}_2,\cdots,\widetilde{f}_k) . \]
Each of these four terms is $ \Lambda_D(\widetilde{f}_1',\widetilde{f}_2,\cdots,\widetilde{f}_k) + o(1)$ by the induction hypothesis, since both $\min(f_1',1)$ and $\widetilde{f}_1'$ are bounded by $1$ and $\|\min(f_1',1) - \widetilde{f}_1'\|_{D,1} = o(1)$ by the previous claim.  This proves \eqref{eq:counting2} and completes the proof of Proposition \ref{prop:counting}.

\bibliographystyle{plain}
\bibliography{narrow}{}

\end{document}